\documentclass[11pt]{amsart}
\usepackage[margin=1in]{geometry}
\usepackage{amssymb,amsmath}
\usepackage{hyperref}

\newtheorem{theorem}{Theorem}[section]
\newtheorem{lemma}[theorem]{Lemma}
\newtheorem{proposition}[theorem]{Proposition}
\newtheorem{corollary}[theorem]{Corollary}
\newtheorem{mainthm}{Theorem}

\theoremstyle{definition}
\newtheorem{definition}[theorem]{Definition}
\newtheorem{remark}[theorem]{Remark}
\newtheorem{example}[theorem]{Example}
\newtheorem{question}[theorem]{Question}

\DeclareMathOperator{\Ann}{Ann}
\DeclareMathOperator{\Spec}{Spec}
\DeclareMathOperator{\Hom}{Hom}
\DeclareMathOperator{\Frac}{Frac}
\DeclareMathOperator{\rad}{rad}
\DeclareMathOperator{\rrad}{\mathbb{R}\text{-}rad}
\newcommand{\p}{\mathfrak p}
\newcommand{\q}{\mathfrak q}
\newcommand{\m}{\mathfrak m}
\newcommand{\M}{\mathcal M}
\newcommand{\F}{\mathcal F}
\newcommand{\G}{\mathcal G}
\newcommand{\OO}{\mathcal O}
\newcommand{\Z}{\mathbb Z}
\newcommand{\Q}{\mathbb Q}
\newcommand{\R}{\mathbb R}
\newcommand{\N}{\mathbb N}

\title[Nullstellens\"atze for modules and sheaves]{Hilbert's Nullstellensatz and the real Nullstellensatz for modules and sheaves}
\author{Jakob Cimpri\v c}
\address{University of Ljubljana, Faculty of Mathematics and Physics, Department of Mathematics, Jadranska 21, SI-1000 Ljubljana, Slovenia, and Institute of Mathematics, Physics and Mechanics, Jadranska 19, SI-1000 Ljubljana, Slovenia}
\email{jaka.cimpric@fmf.uni-lj.si}
\thanks{The author acknowledges the financial support from the Slovenian Research and Innovation Agency (ARIS), research core funding No.\ P1-0222 and research projects No.\ J1-60011 and No.\ J1-50002.}
\subjclass[2020]{Primary 13C05; Secondary 13J30, 14P05, 13A15, 14F06, 18E10}
\keywords{Nullstellensatz, real Nullstellensatz, Hilbert module, maximal submodule, prime submodule, semiprime submodule, real submodule, semireal module, Jacobson ring, coherent sheaf}

\usepackage{array}
\begin{document}

\begin{abstract}
We prove the geometric halves of Hilbert's Nullstellensatz and of the real Nullstellensatz for submodules of finitely generated modules and for coherent sheaves, complementing our earlier result that semiprime (resp.\ real) submodules are intersections of prime (resp.\ real prime) submodules. First, we pass from prime to maximal submodules: over a Jacobson ring, a prime submodule $P$ of $M$ is an intersection of maximal submodules whenever $M/P$ is a torsionless $R/(P:M)$-module, in particular whenever $M/P$ is finitely generated (the latter case is due to Maani Shirazi and Sharif). Consequently, if $R$ is a finitely generated algebra over an algebraically closed field and $M$ is finitely generated, the smallest semiprime submodule containing a submodule $N$ of $M$ consists of all $f\in M$ such that, for every point $a$, the value $f(a)$ (i.e.\ the image of $f$ in the fibre $M/\m_aM$) lies in $N(a)$ (i.e.\ the image of $N$ in $M/\m_aM$). We prove the real analogues over real Jacobson rings; for finitely generated algebras over a real closed field they are determined by the real points. The module classifications and directional vanishing formulas yield both Nullstellens\"atze for coherent sheaves on schemes of finite type over a field. We also prove compatibility of semiprimeness, reality and both radicals with arbitrary localization, and the resulting stalk criteria.
\end{abstract}

\maketitle

% First-page section index, in one column.
\begin{center}
\small\textsc{Contents}
\end{center}
\begingroup
\footnotesize
\renewcommand{\arraystretch}{1.15}
\noindent\begin{tabular}{@{}p{\textwidth}@{}}
\hyperref[sec:intro]{\ref{sec:intro}.\ Introduction}\hfill\pageref{sec:intro} \\
\hyperref[sec:definitions]{\ref{sec:definitions}.\ Definitions}\hfill\pageref{sec:definitions} \\
\hyperref[sec:basic]{\ref{sec:basic}.\ Closure properties and affine criteria}\hfill\pageref{sec:basic} \\
\hyperref[sec:class]{\ref{sec:class}.\ Classification of prime and maximal submodules}\hfill\pageref{sec:class} \\
\hyperref[sec:class-sheaves]{\ref{sec:class-sheaves}.\ Classification of prime and maximal subsheaves}\hfill\pageref{sec:class-sheaves} \\
\hyperref[sec:jacobson]{\ref{sec:jacobson}.\ Jacobson properties}\hfill\pageref{sec:jacobson} \\
\hyperref[sec:fibres]{\ref{sec:fibres}.\ Fibre Nullstellens\"atze for modules}\hfill\pageref{sec:fibres} \\
\hyperref[sec:fibres-sheaves]{\ref{sec:fibres-sheaves}.\ Fibre Nullstellens\"atze for sheaves}\hfill\pageref{sec:fibres-sheaves} \\
\hyperref[sec:loc]{\ref{sec:loc}.\ Localization criteria for modules}\hfill\pageref{sec:loc} \\
\hyperref[sec:loc-sheaves]{\ref{sec:loc-sheaves}.\ Stalk criteria for sheaves}\hfill\pageref{sec:loc-sheaves} \\
\hyperref[sec:example]{\ref{sec:example}.\ A projective example}\hfill\pageref{sec:example} \\
\hyperref[sec:questions]{\ref{sec:questions}.\ Questions}\hfill\pageref{sec:questions}
\end{tabular}
\endgroup
\newpage

\section{Introduction}\label{sec:intro}

Throughout, $R$ is a commutative ring with $1$ and modules are unital. We study prime, semiprime and real submodules, which for $M=R$ recover prime, radical and real ideals; the definitions are developed in Section~\ref{sec:definitions}. We write $\rad N$ and $\rrad N$ for the smallest semiprime and real submodules containing $N$. A module is semireal if it has a nonzero real quotient. In \cite{Cdec} we proved that every semiprime submodule of an arbitrary module is an intersection of prime submodules \cite[Theorem~A]{Cdec}, and every real submodule is an intersection of real prime submodules \cite[Theorem~B]{Cdec}; for $M=R^n$, resp.\ finitely generated $M$, the first statement is due to \cite[Theorem~1]{C22a}, resp.\ \cite[Theorem~2.6]{Ary}.

For $M=R$, the decomposition results above are the algebraic halves of Hilbert's Nullstellensatz and of the real Nullstellensatz. Their geometric halves assert that every prime ideal is an intersection of maximal ideals, and every real prime ideal is an intersection of real maximal ideals. These are the \emph{Jacobson} and \emph{real Jacobson} properties, respectively; for modules, the same definitions use submodules in place of ideals. Here \emph{real maximal} means both maximal and real. Finitely generated algebras over fields have both properties \cite[Chapter~4]{Eis} (Proposition~\ref{prop:fgreal} and Remark~\ref{rem:fgreal} for the real case), but both fail in general. Over a non-real base field, the real Jacobson property is vacuous. We prove the corresponding statements for modules and sheaves.

For the sheaf statements, let $X$ be a Noetherian scheme and let $\G\subseteq\F$ be quasi-coherent sheaves. Prime, semiprime and real subsheaves are defined in Section~\ref{sec:definitions}; semiprimeness and reality agree on affine charts with the corresponding module notions (Section~\ref{sec:basic}). We write $\rad\G$ and $\rrad\G$ for the smallest semiprime and real subsheaves containing $\G$.

\begin{mainthm}[Jacobson properties]\label{thmA}
\begin{enumerate}
\item[(a)] Let $P\subsetneq M$ be prime and assume that $M/P$ is torsionless over $R/(P:M)$, for example finitely generated over $R$. If $R$ is Jacobson, then $P$ is an intersection of maximal submodules. Consequently, $R$ is Jacobson if and only if every finitely generated $R$-module is Jacobson.
\item[(b)] Under the same torsionlessness assumption, if $R$ is real Jacobson and $P$ is real prime, then $P$ is an intersection of real maximal submodules. Consequently, $R$ is real Jacobson if and only if every finitely generated $R$-module is real Jacobson.
\item[(c)] Let $X$ be of finite type over a field and $\F$ quasi-coherent. Every prime subsheaf $\mathcal P\subsetneq\F$ with coherent quotient $\F/\mathcal P$ is an intersection of maximal quasi-coherent subsheaves. If $\mathcal P$ is real prime, it is an intersection of real maximal subsheaves.
\end{enumerate}
\end{mainthm}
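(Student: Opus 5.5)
We reduce everything to the domain $D=R/\p$, where $\p=(P:M)$ is prime because $P$ is prime. The quotient $\bar M=M/P$ is a prime $D$-module with $(0:\bar M)=0$, so it is torsion-free over $D$. By torsionlessness, $\bar M$ embeds in a product $D^I$ via the functionals $\Hom_D(\bar M,D)$. For each nonzero $\bar f\in\bar M$ we therefore get a $\varphi\colon \bar M\to D$ with $d=\varphi(\bar f)\neq 0$.

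Next we use the Jacobson property. Since $R$ is Jacobson, $\p$ is an intersection of maximal ideals. Hence there is a maximal ideal $\m\supseteq\p$ with $d\notin\m/\p$. Composing $\bar M\xrightarrow{\varphi}D\to D/\bar\m=R/\m$ gives a surjection (its image is an $R/\m$-subspace containing the nonzero element $\bar d$). Its kernel, pulled back to $M$, is a maximal submodule $Q\supseteq P$ with $f\notin Q$, since $M/Q\cong R/\m$ is simple. Intersecting over all $f\notin P$ gives $P=\bigcap Q$.

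For the finitely generated case we must show that $\bar M$ is torsionless. Let $K=\Frac D$. Because $\bar M$ is torsion-free and finitely generated, $\bar M\hookrightarrow \bar M\otimes K\cong K^n$. After clearing the finitely many denominators of the images of the generators by a common $c\neq0$, this lands in $D^n$, and the coordinate projections separate points. For the equivalence, the direction ``finitely generated modules Jacobson $\Rightarrow$ $R$ Jacobson'' follows by taking $M=R$. For the converse, a prime submodule of a finitely generated $M$ has finitely generated quotient; here I take ``Jacobson module'' to mean that every prime submodule is an intersection of maximal ones, as defined earlier.

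For (b) the same argument works, with $\p$ now real prime. This uses the earlier fact that $P$ real prime implies $(P:M)$ is a real prime ideal, and, conversely, that $\bar M$ with $D$ real is a real module; I would cite the paper's definitional lemmas for both. Real Jacobson gives $\m\supseteq\p$ real maximal avoiding $d$. The resulting $Q$ has $M/Q\cong R/\m$, which is real, so $Q$ is real maximal. I expect the main subtlety to be exactly this step: checking that a submodule whose quotient is a real field is a real submodule, and that real-prime $P$ forces $(P:M)$ real. Both should reduce to the definitions in Section~\ref{sec:definitions} via the annihilator.

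For (c), fix an affine open cover $U_i=\Spec A_i$ with $A_i$ finitely generated over the field. By the affine criteria of Section~\ref{sec:basic}, $\mathcal P(U_i)$ is prime (or real prime) in $\F(U_i)$ with finitely generated quotient, or equals $\F(U_i)$. By (a) or (b), each such $\mathcal P(U_i)$ is an intersection of maximal submodules $Q=\ker(\F(U_i)\to A_i/\m)$ with $\m$ a closed point. Such a $Q$ corresponds to a surjection $\F\to\kappa(x)$ onto the skyscraper sheaf at the closed point $x\in U_i$, which is closed in $X$ since $X$ is Jacobson of finite type. Its kernel is then a maximal quasi-coherent subsheaf containing $\mathcal P$, and its sections over $U_i$ are the given $Q$. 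Intersecting over all $i$ and all such $Q$ recovers $\mathcal P$ on each chart, hence globally. The main obstacle here is identifying maximal subsheaves with kernels onto skyscrapers at closed points, which I expect the paper's sheaf classification (Section~\ref{sec:class-sheaves}) to provide.
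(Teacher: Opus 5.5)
Your arguments for (a) and (b) are correct and match the paper's separation argument (Theorem~\ref{thm:jac} and the proof of Theorem~\ref{thmA}(a),(b)); composing $\varphi$ directly with $D\to R/\m$ just skips the paper's intermediate submodules $P+\m M$. Your outline of (c) also follows the paper's route, but it has two gaps.

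\textbf{Real Jacobson property of the charts.} To apply (b) on a chart you need $A_i$ to be \emph{real} Jacobson, and you never justify this. It does not follow from the ordinary Jacobson property: $\Z$ is Jacobson, yet $0$ is a real prime ideal and no $p\Z$ is real. For finitely generated algebras over a real field it comes from the Artin--Lang homomorphism theorem (Proposition~\ref{prop:fgreal}); over a non-real field it holds vacuously (Remark~\ref{rem:fgreal}(a)). Without this input the real half of (c) is unproved.

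\textbf{Passing from the charts to the sheaf.} The step ``recovers $\mathcal P$ on each chart, hence globally'' does not follow as written. Let $\mathcal K$ be the sheaf intersection of your maximal subsheaves. You have shown $\Gamma(U_i,\mathcal K)=\Gamma(U_i,\mathcal P)$ only for the members of one fixed cover, and this does not determine $\mathcal K$. An infinite intersection of quasi-coherent subsheaves need not be quasi-coherent, since localization does not commute with infinite intersections. For instance, on $\Spec\Z$ the intersection $\bigcap_n\widetilde{p^n\Z}$ has zero global sections but sections $\Z[1/p]$ over $D(p)$. So it agrees with the zero subsheaf on the cover $\{\Spec\Z\}$ without being zero.

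The paper closes this step with Lemma~\ref{lem:kernelint}. After grouping the hyperplanes at each point, $\mathcal K$ is an intersection of point kernels, hence quasi-coherent with the expected affine sections. Thus $\mathcal K|_{U_i}=\widetilde{\Gamma(U_i,\mathcal K)}=\mathcal P|_{U_i}$. Alternatively, intersect all maximal subsheaves containing $\mathcal P$ and run your chart argument on \emph{every} affine open $U$. Since affine opens form a base, $\Gamma(U,\mathcal K)=\Gamma(U,\mathcal P)$ for all of them gives $\mathcal K=\mathcal P$.

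Either way you need $\Gamma(U,\mathcal P)$ to be prime or all of $\Gamma(U,\F)$. That is Lemma~\ref{lem:primesections}, not the affine criteria of Section~\ref{sec:basic}, which give only semiprimeness and reality. Those criteria would also suffice when combined with \cite[Theorems~A and~B]{Cdec} and your (a),(b), since quotients of finitely generated modules are finitely generated.
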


Part (a) extends the result of Maani Shirazi and Sharif \cite[Proposition~2.9]{MSS} from finitely generated prime quotients to torsionless ones. Both (a) and its real analogue (b) follow from the separation theorem (Theorem~\ref{thm:jac}). The sheaf version in (c) will allow us to test membership in sheaf radicals at closed points (Theorem~\ref{thmB}(c)).

\emph{Fibres and Stalks.} For $\p\in\Spec R$, put $\kappa(\p)=\Frac(R/\p)$ and call $M(\p):=M\otimes_R\kappa(\p)$ the \emph{fibre} of $M$ at $\p$. Write $f(\p)$ for the image of $f\in M$ in $M(\p)$ and $N(\p)$ for the image of $N\otimes_R\kappa(\p)$ in $M(\p)$. For $x\in X$, let $\F_x$ be the \emph{stalk} of $\F$ at $x$, the module of germs of local sections, and let $\m_x$ be the maximal ideal of the local ring $\OO_{X,x}$. Put $\kappa(x):=\OO_{X,x}/\m_x$. The fibre of $\F$ at $x$ is $\F(x):=\F_x/\m_x\F_x$. Write $\G(x)$ for the image of $\G_x$ in $\F(x)$ and $s(x)$ for the image of the germ $s_x$ of a local section $s$. For background, see \cite[II, Sections~1, 2 and~5]{Har}.

Theorem~\ref{thmB} is an instance of the correspondence between submodules and families of fibre subspaces developed in Section~\ref{sec:fibres}.

\begin{mainthm}[Fibre Nullstellens\"atze]\label{thmB}
\begin{enumerate}
\item[(a)] If $M$ is finitely generated and $R$ is Jacobson, then
\[
f\in\rad N\quad\Longleftrightarrow\quad f(\m)\in N(\m)\text{ for every maximal ideal }\m.
\]
\item[(b)] If $M$ is finitely generated and $R$ is real Jacobson, then
\[
f\in\rrad N\quad\Longleftrightarrow\quad f(\m)\in N(\m)\text{ for every real maximal ideal }\m.
\]
\item[(c)] If $X$ is of finite type over a field and $\F$ is coherent, then, on every open $U$,
\[
\begin{aligned}
s\in\Gamma(U,\rad\G)&\Longleftrightarrow s(x)\in\G(x)\text{ for every closed }x\in U,\\
s\in\Gamma(U,\rrad\G)&\Longleftrightarrow s(x)\in\G(x)\text{ for every closed }x\in U
\text{ with }\kappa(x)\text{ real}.
\end{aligned}
\]
\end{enumerate}
In each case the radical is the largest subobject with the same images in the indicated fibres.
\end{mainthm}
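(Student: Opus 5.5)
The plan is to combine the decomposition theorems of \cite{Cdec} with Theorem~\ref{thmA}, and then to identify maximal submodules with fibre data.

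\textbf{Step 1: reduction to maximal submodules.} Let $M$ be finitely generated and $R$ Jacobson. By \cite[Theorem~A]{Cdec}, $\rad N$ is the intersection of the prime submodules $P\supseteq N$. For each such $P$ with $P\ne M$, the quotient $M/P$ is finitely generated. Theorem~\ref{thmA}(a) then writes $P$ as an intersection of maximal submodules. Maximal submodules are prime, so
\[
\rad N=\bigcap\{Q\supseteq N: Q\text{ maximal}\}.
\]
In the real case I would use \cite[Theorem~B]{Cdec} and Theorem~\ref{thmA}(b) in the same way. This gives $\rrad N$ as the intersection of the real maximal submodules containing $N$.

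\textbf{Step 2: maximal submodules as fibre data.} A maximal submodule $Q$ has $M/Q$ simple, hence $M/Q\cong R/\m$ with $\m=(Q:M)$ maximal. So $Q\supseteq\m M$, and $Q$ corresponds to a hyperplane $H$ in the finite-dimensional $\kappa(\m)$-space $M(\m)=M/\m M$. The condition $Q\supseteq N$ means $H\supseteq N(\m)$.

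Now fix $\m$. The intersection of all hyperplanes of $M(\m)$ containing $N(\m)$ is $N(\m)$ itself. Hence
\[
f\in\bigcap_{Q\supseteq N,\ (Q:M)=\m}Q \iff f(\m)\in N(\m).
\]
If $N(\m)=M(\m)$, there is no such $Q$, and the condition is vacuous, consistent with the displayed equivalence.

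Taking the intersection over all $\m$ gives (a). For (b), I need to know which maximal submodules are real. I expect the classification in Section~\ref{sec:class} to show that a maximal $Q$ is real exactly when $R/(Q:M)$ is a real field, i.e.\ when $\m$ is real maximal. Granting this, the same argument restricted to real maximal $\m$ gives (b). The statement that the radical is the largest submodule with the given fibre images is immediate. Any $N'\supseteq N$ with $N'(\m)=N(\m)$ for all relevant $\m$ has all its elements satisfying the criterion, so $N'\subseteq\rad N$. Conversely, $\rad N$ itself has the same fibre images as $N$.

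\textbf{Step 3: sheaves.} For (c) I would work on an affine open cover $U_i=\Spec A_i$ of $U$. Each $A_i$ is a finitely generated algebra over a field, hence Jacobson and real Jacobson. Semiprimeness and reality of subsheaves agree on affine charts with the module notions (Section~\ref{sec:basic}). From this I would get $\Gamma(U_i,\rad\G)=\rad\Gamma(U_i,\G)$, and the same for $\rrad$. This identification of radicals on charts is the main obstacle. It requires that the sheaf radical be computed chartwise, which should follow from the compatibility of $\rad$ with localization proved in Section~\ref{sec:loc}.

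Closed points of $U_i$ correspond to maximal ideals of $A_i$, since $U_i$ is of finite type over a field. The fibres also match: $\F(x)=M(\m_x)$ with $M=\Gamma(U_i,\F)$ finitely generated. Membership of $s$ in $\Gamma(U,\rad\G)$ is local, so applying (a) and (b) on each $U_i$ gives (c). In the real case the relevant points are those closed $x$ with $\kappa(x)$ real.
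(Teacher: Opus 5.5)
Your proposal is correct. For (a) and (b) it follows the paper's proof of Theorem~\ref{thm:modfibre}(b): the prime and real prime decomposition theorems, then Theorem~\ref{thmA}(a),(b) for the finitely generated prime quotients, then the description of maximal submodules by hyperplanes of $M/\m M$. The reality criterion you grant is Theorem~\ref{thm:modclass}(c); it is also immediate from $M/Q\cong R/\m$.

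For (c) your route differs from the paper's. The paper does not deduce (c) from (a),(b). It proves Theorem~\ref{thm:sheaffibre}(b) by substituting the sheaf separation theorem into the prime decomposition of $\rad\G$, and then reads off sections from Proposition~\ref{prop:primeclass}(a). That separation theorem is Theorem~\ref{thmA}(c), proved in Proposition~\ref{prop:sheafseparation} from the classification of prime and maximal subsheaves and Lemma~\ref{lem:kernelint}.

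Your chartwise reduction needs the identity $\Gamma(V,\rad\G)=\rad\Gamma(V,\G)$ on affine opens $V$. It does follow as you suggest:
\begin{enumerate}
\item[(i)] Proposition~\ref{prop:loc}(c) with $S=\{f^n\}$ shows that the modules $\rad\Gamma(V,\G)$ are compatible with passage to basic opens, which are again affine opens of $X$.
\item[(ii)] Hence they glue to a quasi-coherent subsheaf of $\F$.
\item[(iii)] By Proposition~\ref{prop:sheafaffine}, this subsheaf is semiprime (resp.\ real) and is contained in every semiprime (resp.\ real) quasi-coherent subsheaf containing $\G$, so it equals $\rad\G$ (resp.\ $\rrad\G$).
\end{enumerate}
The paper obtains the same identity \eqref{eq:sheafaffineradicals} differently, from Lemma~\ref{lem:kernelint} and Theorem~\ref{thm:modfibre}(b) with $S=\Spec A$.

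Two points should be stated precisely. First, closed points of $U_i$ correspond to maximal ideals of $A_i$ on any affine scheme. What finite type over $k$ supplies is that these points are closed in $X$, since their residue fields are finite over $k$ by Zariski's lemma; this fails on general Noetherian schemes. Second, the final ``largest subobject'' assertion for sheaves is not addressed, but it follows from the section criterion exactly as in your module argument.

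As for what each route buys: yours is shorter and bypasses the sheaf classification and separation entirely. The paper's additionally exhibits $\rad\G$ and $\rrad\G$ as intersections of (real) maximal subsheaves, within the uniform $\rad_S$ framework.
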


Theorem~\ref{thmA} supplies the separation needed to replace arbitrary points by closed points in Theorems~\ref{thm:modfibre}(b) and~\ref{thm:sheaffibre}(b), from which Theorem~\ref{thmB} follows immediately.

For a finitely generated $k$-algebra $R$, write $V_R(k):=\Hom_{k\text{-alg}}(R,k)$ and $\m_a:=\ker a$. If $k$ is algebraically closed in (a), or real closed in (b), it suffices to test $f(a)\in N(a)$ for all $a\in V_R(k)$, where $f(a):=f(\m_a)$ and $N(a):=N(\m_a)$. For $M=R^n$ this is ordinary componentwise evaluation. In (c), the closed points with real residue field are the rational points when the base field is real closed.

In the geometric cases of (a),(b), a proper fibre $N(a)\neq M(a)$ exists if and only if $N\neq M$ in (a), and if and only if $M/N$ is semireal in (b). For a finitely generated $\R$-algebra and $M=R^n$, the real Nullstellensatz was proved in \cite[Theorem~2]{C13}.

\begin{mainthm}[Localization and stalk criteria]\label{thmC}
Let $N\subseteq M$ be modules over a commutative ring $R$. For every multiplicative set $S\subseteq R$,
\[
S^{-1}(\rad N)=\rad(S^{-1}N),\qquad
S^{-1}(\rrad N)=\rrad(S^{-1}N).
\]
Consequently:
\begin{enumerate}
\item[(a)] A submodule $N\subseteq M$ is semiprime, respectively real, if and only if $N_\m\subseteq M_\m$ has that property for every maximal ideal $\m$.
\item[(b)] For $\G\subseteq\F$ quasi-coherent on a Noetherian scheme, semiprimeness and reality are each detected at all stalks (equivalently at all closed stalks). Moreover,
\[
(\rad\G)_x=\rad(\G_x),\qquad
(\rrad\G)_x=\rrad(\G_x),
\]
with the radicals on the right taken inside $\F_x$ over $\OO_{X,x}$.
\end{enumerate}
No finite-generation or coherence assumption is required.
\end{mainthm}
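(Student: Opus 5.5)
The plan is to reduce everything to two stability facts for a multiplicative set $S$ and the localization map $\iota\colon M\to S^{-1}M$: \emph{extension} (if $P\subseteq M$ is semiprime, resp.\ real, then so is $S^{-1}P\subseteq S^{-1}M$) and \emph{contraction} (if $Q\subseteq S^{-1}M$ is semiprime, resp.\ real, over $S^{-1}R$, then so is $\iota^{-1}(Q)\subseteq M$ over $R$). I work with the defining conditions of Section~\ref{sec:definitions}. For semiprimeness this is the one-step condition: $r^2f\in P$ implies $rf\in P$, equivalently with higher powers. For reality it is the analogous condition in which a sum-of-squares expression with coefficients $r_i\in R$ and elements $f_i\in M$ lying in the submodule forces the $r_if_i$ into it.

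\emph{Contraction} is immediate, because the conditions are preserved by the ring map $R\to S^{-1}R$. If $r^2f\in\iota^{-1}(Q)$, then $(r/1)^2(f/1)\in Q$, so $rf/1\in Q$. The real condition is handled the same way, since sums of squares map to sums of squares.

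\emph{Extension} requires clearing denominators. Suppose $(r/s)^2(f/t)\in S^{-1}P$. Then $ur^2f\in P$ for some $u\in S$, hence $(ur)^2f=u\cdot ur^2f\in P$. Semiprimeness gives $urf\in P$, so $(r/s)(f/t)=urf/(ust)\in S^{-1}P$. For reality, given a relation among the $(r_i/s_i)^2(f_i/t_i)$, I multiply by the square of a common denominator $c=\prod s_i\prod t_i$ and by a further element $u\in S$. Then I use $u^2$ instead of $u$, so that all coefficients are genuine squares $(ucr_i/s_i\cdots)^2$ of elements of $R$. This produces a relation in $P$ to which the realness of $P$ applies, and dividing back yields the required membership in $S^{-1}P$. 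I expect this bookkeeping, namely making sure that every multiplier is absorbed into a square, to be the only delicate point; the trick is always to multiply by $u^2$ rather than $u$.

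With these two facts the equality $S^{-1}(\rad N)=\rad(S^{-1}N)$ follows by minimality. $S^{-1}(\rad N)$ is semiprime and contains $S^{-1}N$, so it contains $\rad(S^{-1}N)$. Conversely, $\iota^{-1}(\rad(S^{-1}N))$ is semiprime and contains $N$, so it contains $\rad N$; localizing gives the reverse inclusion. The argument for $\rrad$ is identical.

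For (a), $N$ is semiprime iff $\rad N=N$. Equality of submodules is detected at all maximal ideals, and $(\rad N)_\m=\rad(N_\m)$, so $N$ is semiprime iff $\rad(N_\m)=N_\m$ for all $\m$, i.e.\ iff each $N_\m$ is semiprime. The real case is the same.

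For (b), I use the affine comparison of Section~\ref{sec:basic}: on an affine open $\Spec A$, the sheaf $\rad\G$ corresponds to $\rad$ of the module of sections. The stalk at $x$ is the localization at the corresponding prime, so the displayed formulas are the case $S=A\setminus\p_x$. Semiprimeness or reality of $\G\subseteq\F$ is then equivalent to $\rad\G=\G$, which is detected stalkwise. To restrict to closed points, I use that in a Noetherian scheme every point specializes to a closed point $y$. Then $\G_x\subseteq\F_x$ is a further localization of $\G_y\subseteq\F_y$, which is semiprime (resp.\ real) by the extension step; alternatively, apply (a) on each affine chart, whose maximal ideals are closed points.
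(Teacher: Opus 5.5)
There is a genuine gap at the start: the conditions you work with are not the definitions of Section~\ref{sec:definitions}. There, $N\subseteq M$ is semiprime when $m\in(N:m)M$ implies $m\in N$, equivalently $q\in\Ann_R(q)Q\Rightarrow q=0$ for $Q=M/N$. It is real when $\sum_iq_i\otimes q_i=0$ in $Q\otimes_RQ$ forces all $q_i=0$. Your conditions $r^2f\in P\Rightarrow rf\in P$ and $\sum_ir_i^2f\in P\Rightarrow r_if\in P$ follow from these definitions (for instance, $r\in(N:rf)$ gives $rf\in(N:rf)M$), but they are strictly weaker.

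Example~\ref{ex:radial} shows this. The submodule $N=(x,y)f\subseteq k[x,y]^2$ satisfies both of your conditions. From $r^2g=pf$, resp.\ $(\sum_ir_i^2)g=pf$, with $p\in(x,y)$, one gets $g=qf$ with $r^2q$, resp.\ $(\sum_ir_i^2)q$, in $(x,y)$. Since $(x,y)$ is a real prime ideal, $rq$, resp.\ each $r_iq$, lies in $(x,y)$, so $rg\in N$, resp.\ $r_ig\in N$. Yet $\rad N=\rrad N=Rf\neq N$. So your extension and contraction steps, and the minimality argument built on them, concern a different closure operator. They do not prove the displayed identities for $\rad$ and $\rrad$.

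Your overall architecture is the paper's (Lemma~\ref{lem:contr}, Proposition~\ref{prop:loc}): extension, contraction, minimality, then (a) and (b). Deducing (a) from the radical identity via $(\rad N/N)_\m=0$ is a valid alternative to the paper's direct Proposition~\ref{prop:loc}(b), and (b) via \eqref{eq:sheafaffineradicals} matches the paper. What is missing is extension for the actual definitions, which is Lemma~\ref{lem:denom}.

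\textbf{Extension, semiprime case.} Suppose $x/s=\sum_j(a_j/t_j)(y_j/u_j)$ in $S^{-1}Q$ with $(a_j/t_j)(x/s)=0$. You must clear denominators twice. In the combination, this gives $\tau\in S$ and $z_j\in Q$ with $\tau x=\sum_ja_jz_j$. In the annihilation relations, it gives $w\in S$ with $wa_jx=0$ for all $j$. Then $\sigma=w\tau$ satisfies $\sigma x=\sum_j(wa_j)z_j$ with $wa_j\in\Ann_R(\sigma x)$, so semiprimeness of $Q$ gives $\sigma x=0$.

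\textbf{Extension, real case.} Use $S^{-1}Q\otimes_{S^{-1}R}S^{-1}Q\cong S^{-1}(Q\otimes_RQ)$. A relation $\sum_i(x_i/s)\otimes(x_i/s)=0$ gives $u\sum_ix_i\otimes x_i=0$ for some $u\in S$, hence $\sum_iux_i\otimes ux_i=0$. This is where your ``multiply by $u^2$'' idea genuinely belongs.

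\textbf{Contraction.} This also needs a different justification. For $L\subseteq S^{-1}M$, the module $Y=S^{-1}M/L$ remains semiprime, resp.\ real, over $R$. This is because $\Ann_R(y)Y\subseteq\Ann_{S^{-1}R}(y)Y$, and the map $Y\otimes_RY\to Y\otimes_{S^{-1}R}Y$ preserves $\sum_iy_i\otimes y_i$. Since $M/\iota^{-1}(L)$ embeds into $Y$, Proposition~\ref{prop:closprop}(a) applies.
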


Theorem~\ref{thmC} follows from Proposition~\ref{prop:loc} for modules and Proposition~\ref{prop:stalk} for sheaves.

The next theorem describes every prime submodule or subsheaf by a point and a proper subspace of the fibre at that point. The subspace specifies the allowed values at the chosen point. The resulting submodule or subsheaf is maximal exactly when the point is closed (a maximal ideal in the module case) and the subspace is a hyperplane.

\begin{mainthm}[Classification]\label{thmD}
\begin{enumerate}
\item[(a)] For any $R$-module $M$, the assignment
\[
(\p,W)\longmapsto P_{\p,W}:=\{f\in M:f(\p)\in W\}
\]
is a bijection from prime ideals $\p$ with proper $\kappa(\p)$-subspaces $W\subsetneq M(\p)$ onto prime submodules. Real primes correspond to pairs with real residue field; maximal submodules correspond to maximal $\p$ and hyperplanes $W$.
\item[(b)] For $\F$ quasi-coherent on a Noetherian scheme, $(x,W)\mapsto\mathcal P_{x,W}$, with point kernels defined in Section~\ref{sec:class-sheaves}, is a bijection from points with proper fibre subspaces onto prime quasi-coherent subsheaves. Real primes correspond to points with real residue field. Maximal subsheaves correspond to closed points and hyperplanes.
\end{enumerate}
In both classifications, the real maximal objects correspond to the maximal pairs with real residue field.
\end{mainthm}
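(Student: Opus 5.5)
The plan for (a) is to realise every prime submodule as the kernel of a single map into a fibre quotient, and then to read off the correspondence directly. Given $(\p,W)$ with $W\subsetneq M(\p)$, the set $P_{\p,W}$ is the kernel of the $R$-linear map
\[
M\to M(\p)\to M(\p)/W .
\]
The target is a nonzero $\kappa(\p)$-vector space. It is annihilated by $\p$, and every $r\notin\p$ acts on it invertibly. Primeness of $P_{\p,W}$ follows at once:
\begin{itemize}
\item if $rf\in P_{\p,W}$ with $r\notin\p$, then $f\in P_{\p,W}$;
\item if $r\in\p$, then $rM\subseteq P_{\p,W}$.
\end{itemize}
Properness holds because the image of $M$ spans $M(\p)$ over $\kappa(\p)$, so it cannot lie in $W$. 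The same spanning argument shows $(P_{\p,W}:M)=\p$.

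To recover $W$, I will show that the image of $P_{\p,W}$ in $M(\p)$ spans $W$. Any $w\in W$ can be written as $s^{-1}f(\p)$ with $f\in M$ and $s\notin\p$. Then $f(\p)=sw\in W$, so $f\in P_{\p,W}$. Together with $(P_{\p,W}:M)=\p$, this gives injectivity.

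For surjectivity, let $P$ be prime, put $\p=(P:M)$, and let $W=P(\p)$. Primeness says exactly that $M/P$ is a torsion-free $R/\p$-module. Hence $M/P$ embeds into its localization at $(R/\p)\setminus\{0\}$. By right exactness of $-\otimes\kappa(\p)$ that localization is
\[
(M/P)\otimes_R\kappa(\p)\cong M(\p)/P(\p).
\]
This yields $P=P_{\p,W}$, and $W$ is proper because $M/P\neq0$ embeds into $M(\p)/W$.

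For the maximal case, note that maximal submodules are prime, so it suffices to decide when $M/P_{\p,W}$ is simple.
\begin{itemize}
\item If $\p$ is maximal, then $\kappa(\p)=R/\p$ and $M/P_{\p,W}$ is the spanning subspace, hence all of $M(\p)/W$. This is simple exactly when $W$ is a hyperplane.
\item If $\p$ is not maximal, choose a maximal ideal $\m\supsetneq\p$. Torsion-freeness gives $\m(M/P)\neq0$, and $\m(M/P)\neq M/P$ because $M/P$ is nonzero and lies in a $\kappa(\p)$-space on which elements of $\m\setminus\p$ are invertible. I expect Nakayama-type care to be needed in this last step; otherwise I will exhibit the proper nonzero submodule $(\m\cap\text{image})$ directly.
\end{itemize}

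For the real clause, I will invoke the description of real primes from Section~\ref{sec:definitions} and \cite{Cdec}: a prime submodule $P$ is real iff the ideal $(P:M)$ is real, i.e.\ iff $\kappa(\p)$ is a real field. Real maximal submodules then correspond to maximal pairs with real residue field.

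For (b), I will reduce to (a) chartwise. A prime quasi-coherent subsheaf $\mathcal P\subsetneq\F$ will have a well-defined associated point $x$: on each affine open $U=\Spec A$ where $\mathcal P|_U\neq\F|_U$, the prime ideal $(\mathcal P(U):\F(U))$ defines a point. The key step is to show that these points agree on overlaps, using compatibility of primeness and colon ideals with localization (as in Theorem~\ref{thmC}). This forces them to be one point $x$, with $\mathcal P=\F$ on opens missing $\overline{\{x\}}$. On an affine chart containing $x$, part (a) gives $\mathcal P(U)=P_{x,W}$ with $W\subsetneq\F(x)$, and this identifies $\mathcal P$ with the point kernel $\mathcal P_{x,W}$. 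Maximality and reality then transfer from (a), closed points playing the role of maximal ideals.

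I expect this gluing step, namely that the prime data from different affine charts determine a single point and a single fibre subspace, to be the main obstacle.
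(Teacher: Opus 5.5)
Your plan for (b) has a genuine gap at exactly the step you flag, and the mechanism you propose for it cannot work.

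First, you take for granted that $\Gamma(U,\mathcal P)$ is a prime submodule of $\Gamma(U,\F)$ on each affine $U$ where it is proper. Primeness of $\mathcal Q=\F/\mathcal P$ is a global condition on quasi-coherent subsheaves, so this step needs a local subsheaf to be made global. For $0\neq q\in\Gamma(U,\mathcal Q)$, extend $\widetilde{Aq}$ to a coherent $\mathcal E\subseteq\mathcal Q$ (Stacks Tag 01PF). Primeness then gives $\mathcal I(\mathcal Q)=\Ann\mathcal E$, whose sections on $U$ are $\Ann_A(q)$. Hence $\Ann_A(q)=\Ann_A\Gamma(U,\mathcal Q)$.

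Second, comparing the attached points on overlaps via localization cannot produce a single point. Two charts may carry different points while the quotient vanishes on their overlap. For $\mathcal P=\widetilde{(t(t-1))}\subseteq\OO_{\mathbb A^1_k}$, the restrictions to $D(t-1)$ and $D(t)$ are prime with points $(t)$ and $(t-1)$. All overlap compatibilities hold trivially, yet $\mathcal P$ is not prime (Remark~\ref{rem:primesheaf}). The missing idea is to apply global primeness to a well-chosen nonzero subsheaf, as in Lemma~\ref{lem:primesections}. Let $Z_U$ be the closure of the point of one chart, and let $\mathcal T\subseteq\mathcal Q$ be the subsheaf killed by $\mathcal I_{Z_U}$. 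Then $\mathcal T|_U=\mathcal Q|_U\neq0$, so $\mathcal I_{Z_U}\subseteq\mathcal I(\mathcal T)=\mathcal I(\mathcal Q)$. This puts the point of every other chart with nonzero sections into $Z_U$, and by symmetry the closures, hence the points, coincide.

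The rest of (b) is also only sketched.
\begin{itemize}
\item You never show that $\mathcal P_{x,W}$, for $W\subsetneq\F(x)$, is quasi-coherent and prime in the categorical sense. This needs that every nonzero $\mathcal L$ inside the image of $\F\to(j_x)_*(\F(x)/W)$ has nonzero sections on every affine neighbourhood of $x$, whence $\mathcal I(\mathcal L)=\mathcal I_{\overline{\{x\}}}$.
\item Maximality does not transfer chartwise, since a point closed in an affine chart need not be closed in $X$. For a discrete valuation ring $D$ with uniformizer $\pi$, the subsheaf $0\subseteq\OO_{\Spec D}$ is maximal on $D(\pi)=\Spec\Frac D$ but not globally. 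You need the skyscraper description at closed points. Conversely, you need that a simple quotient is coherent and maps onto $(j_x)_*\kappa(x)$ at a closed point of its support.
\item Reality transfers only through the affine criterion of Proposition~\ref{prop:sheafaffine}(b), because the sheaf notion is defined by totally isotropic subsheaves.
\end{itemize}

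Part (a) follows the paper's proof, with two repairs.
\begin{itemize}
\item In the non-maximal case, the claim $\m(M/P)\neq M/P$ is false in general: $p\Q=\Q$ over $\Z$ (compare Remark~\ref{rem:empty}). Argue instead that a simple module has maximal annihilator while $\Ann_R(M/P)=\p$ (Lemma~\ref{lem:simple}). Alternatively, $\m q$ is a proper nonzero submodule for any $q\neq0$: it is nonzero by torsion-freeness, and $q=aq$ with $a\in\m$ would give $1-a\in\p\subseteq\m$.
\item For the real clause, the implication ``$\kappa(\p)$ real $\Rightarrow$ $P$ real'' needs proof. Embed $Q=M/P$ into $V=Q\otimes_R\kappa(\p)$. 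For every $\kappa(\p)$-linear $\lambda:V\to\kappa(\p)$, apply $\lambda\otimes\lambda$ to $\sum_iq_i\otimes q_i=0$, obtaining $\sum_i\lambda(q_i\otimes1)^2=0$ in the real field $\kappa(\p)$.
\end{itemize}
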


Theorem~\ref{thmD} follows from Theorem~\ref{thm:modclass} for modules and Propositions~\ref{prop:primeclass}, \ref{prop:realprimesheaf} and~\ref{prop:maxsheaf} for sheaves. The module classification reformulates classical descriptions by linear equations \cite{THS}; its fibre formulation also leads to the sheaf classification.

Each theme is treated first for modules and then for sheaves: classification and Theorem~\ref{thmD} in Sections~\ref{sec:class}--\ref{sec:class-sheaves}, separation and Theorem~\ref{thmA} in Section~\ref{sec:jacobson}, fibre correspondences and Theorem~\ref{thmB} in Sections~\ref{sec:fibres}--\ref{sec:fibres-sheaves}, and localization and Theorem~\ref{thmC} in Sections~\ref{sec:loc}--\ref{sec:loc-sheaves}. The main proofs use only the elementary properties of localization recalled in Section~\ref{sec:basic}.

\section{Definitions}\label{sec:definitions}

We first express primeness and semiprimeness intrinsically in a Grothendieck category, starting from their module definitions. Reality uses the additional tensor structure. Throughout, a property of a submodule or subsheaf refers to its quotient.

\subsection{Categorical definitions of prime and semiprime objects}

\emph{The module model.} An $R$-module $Q$ is \emph{prime} if $Q\neq0$ and $\Ann_R(q)=\Ann_R(Q)$ for every $0\neq q\in Q$, and \emph{semiprime} if $q\in\Ann_R(q)Q$ implies $q=0$. The latter condition rules out nonzero expressions $q=\sum_j a_jy_j$ whose coefficients all annihilate $q$. For $N\subseteq M$, set
\[
(N:M):=\Ann_R(M/N),\qquad (N:m):=\Ann_R(m+N),
\]
and call $N$ prime or semiprime when $M/N$ is. Equivalently, $N$ is prime if it is proper and $rm\in N$ implies $m\in N$ or $rM\subseteq N$; it is semiprime if $m\in(N:m)M$ implies $m\in N$ \cite[Lemma~2.1]{Cdec}. For $M=R$, these are prime and radical ideals: $\bar r\in\Ann_R(\bar r)(R/I)$ is equivalent to $r\in(I:r)$, hence to $r^2\in I$. This module notion of semiprimeness was introduced for $R^n$ in \cite{C22a} and extended to arbitrary modules in \cite{Ary}.

The following reformulation replaces elements by subobjects.
\begin{proposition}[Submodule criteria]\label{prop:subcrit}
An $R$-module $Q$ is prime if and only if $Q\neq0$ and $\Ann_R(L)=\Ann_R(Q)$ for every nonzero submodule $L\subseteq Q$. It is semiprime if and only if $L\subseteq\Ann_R(L)Q$ implies $L=0$ for every submodule $L\subseteq Q$.
\end{proposition}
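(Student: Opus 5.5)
We treat the two equivalences separately, each by passing between elements and the cyclic submodules they generate, using that $\Ann_R(L)=\bigcap_{q\in L}\Ann_R(q)$.

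\emph{Primeness.} Assume $Q$ is prime in the element sense and let $0\neq L\subseteq Q$. Since $L\subseteq Q$, we have $\Ann_R(Q)\subseteq\Ann_R(L)$. Conversely, pick any $0\neq q\in L$. Then $\Ann_R(L)\subseteq\Ann_R(q)=\Ann_R(Q)$, so $\Ann_R(L)=\Ann_R(Q)$. For the other direction, given $0\neq q\in Q$, apply the hypothesis to $L=Rq$ and use $\Ann_R(Rq)=\Ann_R(q)$. The condition $Q\neq0$ is common to both definitions.

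\emph{Semiprimeness, submodule criterion implies element criterion.} Suppose $q\in\Ann_R(q)Q$. Put $L=Rq$. Then $\Ann_R(L)=\Ann_R(q)$, and $\Ann_R(q)Q$ is a submodule containing $q$, hence containing $L$. So $L\subseteq\Ann_R(L)Q$, which gives $L=0$ and therefore $q=0$.

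\emph{Semiprimeness, element criterion implies submodule criterion.} This is the step needing an idea. Suppose $L\subseteq\Ann_R(L)Q$ and take $q\in L$. Then $q\in\Ann_R(L)Q$. Since $\Ann_R(L)\subseteq\Ann_R(q)$, it follows that $\Ann_R(L)Q\subseteq\Ann_R(q)Q$. Hence $q\in\Ann_R(q)Q$, so $q=0$ by the element criterion. As $q\in L$ was arbitrary, $L=0$.

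The only potential obstacle is the direction of the annihilator inclusion: enlarging the submodule shrinks the annihilator. This monotonicity makes the element condition stronger per element, so it implies the subobject condition. Once that is observed, the proof is routine.
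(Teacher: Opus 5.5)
Your proof is correct and follows essentially the same route as the paper's: the submodule conditions give the elementwise ones via $L=Rq$. Conversely, you pick $q\in L$ and use the monotonicity $\Ann_R(L)\subseteq\Ann_R(q)$, both in the chain $\Ann_R(Q)\subseteq\Ann_R(L)\subseteq\Ann_R(q)=\Ann_R(Q)$ for primeness and in the inclusion $\Ann_R(L)Q\subseteq\Ann_R(q)Q$ for semiprimeness.
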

\begin{proof}
The submodule conditions imply the elementwise ones by taking $L=Rq$. Conversely, for $0\neq q\in L$, primeness gives $\Ann_R(Q)\subseteq\Ann_R(L)\subseteq\Ann_R(q)=\Ann_R(Q)$, while semiprimeness excludes $q\in\Ann_R(L)Q\subseteq\Ann_R(q)Q$.
\end{proof}

\emph{Closed subcategories and universal quotients.} A Grothendieck category is an abelian category with arbitrary coproducts, exact filtered colimits and a generator; modules and quasi-coherent sheaves are the examples used here. A full subcategory $\mathcal D$ is \emph{closed} if it is closed under subobjects, quotients, arbitrary direct sums and arbitrary direct products \cite[Definitions~2.14 and~2.29]{Kanda22}. Write $\langle Q\rangle_{\mathrm{cl}}$ for the smallest closed subcategory containing $Q$. In $R$-modules, these subcategories are exactly
\[
\mathcal D_I:=\{T:IT=0\},\qquad
\langle L\rangle_{\mathrm{cl}}=\mathcal D_{\Ann_R(L)}
\quad\text{\cite[Proposition~2.32(2),(4)]{Kanda22}}.
\]
Thus $\langle L\rangle_{\mathrm{cl}}$ records the annihilator of $L$. Closed subcategories need not be closed under extensions: modules killed by $2$ form a closed subcategory, but an extension of two such modules can be $\Z/4\Z$.

For every closed $\mathcal D$ there is a largest quotient $q_{\mathcal D}:Q\twoheadrightarrow Q_{\mathcal D}$ belonging to $\mathcal D$: every morphism from $Q$ to an object of $\mathcal D$ factors uniquely through it \cite[Proposition~2.31]{Kanda22}. Concretely, divide $Q$ by the intersection of the kernels of all such morphisms. There is a set of distinct kernels; choosing one morphism for each kernel embeds the resulting quotient into a product of objects of $\mathcal D$, so it belongs to $\mathcal D$. For modules this construction is simply
\[
Q_{\mathcal D_I}=Q/IQ.
\]

\emph{The categorical definitions.} Global sections alone do not detect sheaf radicals (Example~\ref{ex:projective}); we therefore use definitions intrinsic to the category. Let $\mathcal A$ be a Grothendieck category and, for $L\subseteq Q$, write $q_L:Q\twoheadrightarrow Q_{\langle L\rangle_{\mathrm{cl}}}$.
\begin{definition}[Prime and semiprime objects]\label{def:primesheaf}
An object $Q$ is \emph{prime} if $Q\neq0$ and $\langle L\rangle_{\mathrm{cl}}=\langle Q\rangle_{\mathrm{cl}}$ for every nonzero subobject $L\subseteq Q$, following Smith \cite[Definition~4.3]{Smith}, Pappacena \cite[Definition~6.2]{Pap} and Kanda \cite[Definition~4.1]{Kanda22}. Equivalently, $Q\neq0$ and $q_L$ is an isomorphism for every nonzero subobject $L\subseteq Q$.

We call $Q$ \emph{semiprime} if $q_L|_L:L\to Q_{\langle L\rangle_{\mathrm{cl}}}$ is nonzero for every nonzero $L\subseteq Q$. A quasi-coherent subsheaf is prime or semiprime when its quotient is the corresponding object of $\operatorname{QCoh}X$.
\end{definition}
Primeness says that the whole object survives the quotient determined by any nonzero subobject; semiprimeness says that this subobject does not disappear. Every prime object is semiprime. The zero object is semiprime, but not prime. No tensor structure or Noetherian hypothesis is required for these categorical definitions.

\begin{lemma}\label{lem:categoricalsemiprime}
An object $Q$ is semiprime if and only if $q_L|_L$ is a monomorphism for every subobject $L\subseteq Q$.
\end{lemma}
\begin{proof}
Suppose $Q$ is semiprime and $K:=\ker(q_L|_L)\neq0$. Since $\langle K\rangle_{\mathrm{cl}}\subseteq\langle L\rangle_{\mathrm{cl}}$, the map $q_K$ factors through $q_L$. Its restriction to $K$ is therefore zero, contradicting semiprimeness. The converse is immediate.
\end{proof}

\emph{Recovery of the module definitions.} Since $Q_{\langle L\rangle_{\mathrm{cl}}}=Q/\Ann_R(L)Q$, categorical primeness is the prime criterion of Proposition~\ref{prop:subcrit}. Categorical semiprimeness gives its semiprime criterion, or, by Lemma~\ref{lem:categoricalsemiprime}, the equivalent condition
\[
L\cap\Ann_R(L)Q=0\qquad(L\subseteq Q).
\]
For example, $\Z/6\Z$ is semiprime because $6\Z$ is radical, but not prime: the annihilator of $\bar2$ is $3\Z$, whereas the annihilator of the whole module is $6\Z$. For $Q=\Z/4\Z$ and $L=2Q$, we have $\Ann_\Z(L)=2\Z$ and $q_L|_L:L\to Q/2Q$ is zero. Thus $Q$ is not semiprime.

Semiprimeness is not determined by the generated closed subcategory: $\Z/4\Z\oplus\Z$ and $\Z$ both have annihilator zero and hence generate all $\Z$-modules, but only $\Z$ is semiprime. For the former, take $L=(2\Z/4\Z)\oplus0\subseteq2Q=\Ann_\Z(L)Q$. Thus object semiprimeness differs from reducedness of the generated category, as studied in \cite[Definitions~3.7 and~6.6]{Kanda22}.

\emph{The sheaf model.} For a ring $A$, the affine scheme $\Spec A$ has basic opens $D(f)=\{\p:f\notin\p\}$ and structure sheaf given there by $A_f=A[1/f]$. A scheme is obtained by gluing affine schemes, its \emph{affine charts}, along open subsets \cite[II, Section~2]{Har}. It is \emph{Noetherian} if finitely many charts with Noetherian coordinate rings cover it, and \emph{of finite type over $k$} if, as a $k$-scheme, these charts can be chosen with finitely generated $k$-algebras.

The structure sheaves of the charts glue to the \emph{structure sheaf} $\OO_X$; on a chart $U=\Spec A$, we have $\OO_X(D(f))=A_f$. An $\OO_X$-module $\F$ is a sheaf of abelian groups whose sections on each open $V$ form an $\OO_X(V)$-module, with scalar multiplication compatible with restriction. In particular, $\F(D(f))$ is an $A_f$-module on an affine chart.

An $A$-module $M$ defines the sheaf $\widetilde M$ on $\Spec A$ with sections $M_f=M[1/f]$ on $D(f)$; restriction maps are localizations, and compatible sections glue uniquely. An $\OO_X$-module $\F$ is \emph{quasi-coherent} if its restrictions to the affine charts $U_i=\Spec A_i$ are sheaves $\widetilde{M_i}$ associated to $A_i$-modules $M_i$, identified compatibly on overlaps. On our Noetherian schemes, $\F$ is \emph{coherent} if the modules $M_i$ are finitely generated; this can be checked on an affine cover. Throughout the sheaf sections, $X$ is Noetherian.

\emph{The sheaf criteria.} On a Noetherian scheme $X$, let $\mathcal I(\mathcal Q)$ be the largest quasi-coherent ideal sheaf killing $\mathcal Q$, obtained by summing all such ideals. It equals $\Ann\mathcal Q$ for coherent $\mathcal Q$. Closed subcategories of $\operatorname{QCoh}X$ correspond to quasi-coherent ideals $\mathcal I$ through $\mathcal D_{\mathcal I}:=\{\mathcal Q:\mathcal I\mathcal Q=0\}$ \cite[Theorem~9.3(1),(2)]{Kanda22}, \cite[Theorem~11.9 and Lemma~11.8]{Kanda15}; the classification holds even for locally Noetherian schemes. The universal quotient is $\mathcal Q/\mathcal I\mathcal Q$, since every morphism to an object killed by $\mathcal I$ kills $\mathcal I\mathcal Q$. Hence $\mathcal Q_{\langle\mathcal L\rangle_{\mathrm{cl}}}=\mathcal Q/\mathcal I(\mathcal L)\mathcal Q$. For $\mathcal Q=\F/\G$, using Lemma~\ref{lem:categoricalsemiprime}, the definitions become
\begin{equation}\label{eq:sheafcrit}
\begin{aligned}
\G\text{ prime:}\quad &\mathcal Q\neq0,\qquad \mathcal L\neq0\ \Longrightarrow\ \mathcal I(\mathcal L)=\mathcal I(\mathcal Q),\\
\G\text{ semiprime:}\quad &\mathcal L\cap\mathcal I(\mathcal L)\mathcal Q=0
\qquad(\mathcal L\subseteq\mathcal Q\text{ quasi-coherent}).
\end{aligned}
\end{equation}
Equivalently, $\mathcal L\subseteq\mathcal I(\mathcal L)\mathcal Q$ implies $\mathcal L=0$. The ideal-sheaf form is
\begin{equation}\label{eq:sheafidealcrit}
\mathcal J\mathcal L=0,\quad \mathcal L\subseteq\mathcal J\mathcal Q
\quad\Longrightarrow\quad \mathcal L=0
\end{equation}
for every quasi-coherent ideal $\mathcal J$ and every quasi-coherent $\mathcal L\subseteq\mathcal Q$. Indeed, $\mathcal J\subseteq\mathcal I(\mathcal L)$; conversely, take $\mathcal J=\mathcal I(\mathcal L)$.

\begin{remark}[Categorical prime decomposition]\label{rem:catdecomposition}
In a Grothendieck category, every subdirect product of prime objects is semiprime. Conversely, if every nonzero semiprime object contains a prime subobject, every semiprime object is a subdirect product of prime objects; see also \cite{Cnc}. For a locally Noetherian scheme $X$, every nonzero object of $\operatorname{QCoh}X$ contains a prime subobject \cite[Proposition~9.10]{Kanda22}. Hence every semiprime quasi-coherent sheaf is a subdirect product of prime quasi-coherent sheaves. The results below give the explicit fibre description, localization criteria and real decomposition beyond this abstract separation theorem.
\end{remark}

\begin{proof}
Let $Q\subseteq\prod_iQ_i$ with all $Q_i$ prime, hence semiprime, let $L\subseteq Q$, put $\mathcal C=\langle L\rangle_{\mathrm{cl}}$, and let $L_i$ be the image of $L$ in $Q_i$. As $\langle L_i\rangle_{\mathrm{cl}}\subseteq\mathcal C$, the map $L_i\to(Q_i)_{\langle L_i\rangle_{\mathrm{cl}}}$ factors through $L_i\to(Q_i)_{\mathcal C}$, which is therefore a monomorphism by Lemma~\ref{lem:categoricalsemiprime}. Since $\prod_i(Q_i)_{\mathcal C}\in\mathcal C$, the map $Q\to\prod_i(Q_i)_{\mathcal C}$ factors through $q_L$, and its restriction to $L$ is the monomorphism $L\to\prod_iL_i\to\prod_i(Q_i)_{\mathcal C}$. Hence $q_L|_L$ is a monomorphism, and $Q$ is semiprime.

Conversely, subobjects of a semiprime $Q$ are semiprime: for $L\subseteq Y\subseteq Q$, the map $L\to Q_{\langle L\rangle_{\mathrm{cl}}}$ factors through $Y_{\langle L\rangle_{\mathrm{cl}}}$. Let $L\subseteq Q$ be a prime subobject, $\mathcal C=\langle L\rangle_{\mathrm{cl}}$ and $K_0=\ker q_L$, so $L\cap K_0=0$ by Lemma~\ref{lem:categoricalsemiprime}. By Zorn's lemma and exactness of filtered colimits, choose $P\supseteq K_0$ maximal with $P\cap L=0$. Then $Q/P$, a quotient of $Q_{\mathcal C}$, lies in $\mathcal C$. If $0\neq K/P\subseteq Q/P$, maximality gives $K\cap L\neq0$, and $K\cap L$ embeds into $K/P$. Nonzero subobjects of prime objects are prime \cite[Proposition~4.2(1)]{Kanda22}, so $\mathcal C=\langle K\cap L\rangle_{\mathrm{cl}}\subseteq\langle K/P\rangle_{\mathrm{cl}}\subseteq\mathcal C$; thus $Q/P$ is prime. Let $Y$ be the intersection of all such $P$. If $Y\neq0$, it is semiprime and contains a prime subobject $L$; the corresponding $P$ satisfies $L\subseteq Y\subseteq P$ and $P\cap L=0$, a contradiction. Hence $Q\to\prod Q/P$ is a monomorphism.
\end{proof}

\subsection{Real modules and sheaves}

Reality uses the tensor structure of modules and sheaves, in addition to their abelian-category structure. An $R$-module $Q$ is \emph{real} if $\sum_i q_i\otimes q_i=0$ in $Q\otimes_R Q$ implies $q_i=0$ for every $i$; a submodule $N\subseteq M$ is real when $M/N$ is real. Real modules are semiprime \cite[Lemma~4.1]{Cdec}.

For $M=R$, an ideal $I$ is \emph{real} if $\sum_ia_i^2\in I$ implies $a_i\in I$ for all $i$. The $R$-module $R/I$ is real if and only if $I$ is a real ideal, since $R/I\otimes_RR/I\cong R/I$ via $\bar a\otimes\bar b\mapsto\overline{ab}$, which maps $\sum_i\bar a_i\otimes\bar a_i$ to $\overline{\sum_ia_i^2}$. Thus real (prime) submodules of $R$ are exactly real (prime) ideals.

To express the same condition without elements, in either category let $\pi_i:Q^{\oplus n}\to Q$ be the projections and put
\[
b_n:=\sum_{i=1}^n\pi_i\otimes\pi_i:
Q^{\oplus n}\otimes Q^{\oplus n}\longrightarrow Q\otimes Q.
\]
A subobject $L\subseteq Q^{\oplus n}$ is \emph{totally isotropic} if the composite $L\otimes L\to Q^{\oplus n}\otimes Q^{\oplus n}\xrightarrow{b_n}Q\otimes Q$ is zero. Tensor products are over $R$ for modules and over $\OO_X$ for sheaves; the tensor product of an inclusion need not be injective. For modules, reality is equivalent to the absence of nonzero totally isotropic submodules for every $n$: a tuple with $\sum_i q_i\otimes q_i=0$ generates a totally isotropic cyclic submodule. Conversely, if $L$ is totally isotropic, every $q=(q_i)\in L$ satisfies $\sum_i q_i\otimes q_i=b_n(q\otimes q)=0$, so reality gives $q=0$.

\begin{definition}\label{def:realsheaf}
A quasi-coherent subsheaf $\G\subseteq\F$ is \emph{real} if the only totally isotropic quasi-coherent subsheaf of $(\F/\G)^{\oplus n}$ is zero for every $n\ge1$. A submodule or quasi-coherent subsheaf is \emph{real prime} or \emph{real maximal} if it is real and prime, or real and maximal, respectively. A module or quasi-coherent sheaf is \emph{semireal} if it has a nonzero real quotient.
\end{definition}

A submodule or quasi-coherent subsheaf is \emph{maximal} if it is proper and has no proper intermediate submodule, resp.\ quasi-coherent subsheaf. We write $\rad N$, $\rrad N$ and $\rad\G$, $\rrad\G$ for the intersections of the semiprime and real subobjects containing the given subobject. For modules these are the smallest such submodules by Proposition~\ref{prop:closprop}; for sheaves, Theorem~\ref{thm:sheaffibre} proves that these intersections are quasi-coherent and gives their affine descriptions. Quasi-coherence here requires proof: arbitrary intersections taken as sheaves need not preserve it.

\section{Closure properties and affine criteria}\label{sec:basic}

We collect the closure properties and affine comparisons needed in the proofs. The module statements apply over arbitrary commutative rings; throughout the sheaf part, $X$ is Noetherian.

\subsection{Modules}

\begin{proposition}[Closure properties]\label{prop:closprop}
\begin{enumerate}
\item[(a)] Submodules and direct products (hence direct sums) of semiprime or real modules have the same property.
\item[(b)] Inverse images and intersections of semiprime or real submodules have the same property.
\end{enumerate}
\end{proposition}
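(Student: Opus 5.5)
For part (a) I will work directly with the elementwise definitions. Semiprimeness of $Q$ means that $q\in\Ann_R(q)Q$ forces $q=0$; reality means that $\sum_i q_i\otimes q_i=0$ in $Q\otimes_RQ$ forces every $q_i=0$.

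\emph{Submodules.} Let $L\subseteq Q$ and $q\in L$. For semiprimeness, if $q\in\Ann_R(q)L$, then $q\in\Ann_R(q)Q$, so $q=0$. For reality, the map $L\otimes L\to Q\otimes Q$ induced by the inclusion carries $\sum q_i\otimes q_i$ to the corresponding sum in $Q\otimes Q$. Hence a vanishing sum in $L\otimes L$ also vanishes in $Q\otimes Q$, so all $q_i=0$. No injectivity of the tensored inclusion is needed, only the existence of this map.

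\emph{Products.} Let $Q=\prod_jQ_j$ with projections $p_j$. For semiprimeness, suppose $q=\sum_k a_ky_k$ with every $a_k$ annihilating $q$. Projecting gives $p_j(q)=\sum_k a_kp_j(y_k)$, and each $a_k$ annihilates $p_j(q)$. So $p_j(q)\in\Ann_R(p_j(q))Q_j$, hence $p_j(q)=0$ for all $j$, and $q=0$. For reality, the map $p_j\otimes p_j:Q\otimes Q\to Q_j\otimes Q_j$ sends $\sum q_i\otimes q_i=0$ to $\sum p_j(q_i)\otimes p_j(q_i)=0$. Reality of $Q_j$ then gives $p_j(q_i)=0$ for all $i,j$, so every $q_i=0$. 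Direct sums are submodules of direct products, so they follow from the two previous cases.

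For part (b), take a homomorphism $\varphi:M'\to M$ and a submodule $N\subseteq M$. Then $M'/\varphi^{-1}(N)$ embeds into $M/N$, so the inverse-image statement follows from the submodule case of (a). For a family $N_j\subseteq M$ with $\bigcap_j N_j=N$, the diagonal map embeds $M/N$ into $\prod_jM/N_j$, so the intersection statement follows from the product and submodule cases of (a). As a consequence, $\rad N$ and $\rrad N$ are themselves semiprime, resp.\ real, and are therefore the smallest such submodules.

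I expect the real case of products to be the only point requiring care, since the tensor product does not commute with infinite products. The argument above avoids this issue, because it uses only the projection maps $p_j\otimes p_j$ and never needs $Q\otimes Q$ to embed into $\prod_j Q_j\otimes Q_j$.
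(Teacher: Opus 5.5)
Your proof is correct and follows essentially the same route as the paper: for (a), inclusions carry self-annihilating expressions and vanishing tensor sums into the ambient module, and coordinate projections detect them in products. For (b), you use the embeddings $M'/\varphi^{-1}(N)\hookrightarrow M/N$ and $M/\bigcap_j N_j\hookrightarrow\prod_j M/N_j$ and reduce to (a), exactly as the paper does, only with the elementwise details written out.
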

\begin{proof}
For submodules, inclusions carry each self-annihilating expression or vanishing tensor sum to the ambient module; for products, the coordinate projections detect each such expression or sum. Thus (a) holds. For (b), a homomorphism $f:M\to M'$ induces $M/f^{-1}(N')\hookrightarrow M'/N'$, and $M/\bigcap_iN_i$ embeds into $\prod_iM/N_i$; apply (a).
\end{proof}

Consequently, intersections define the smallest semiprime and real submodules containing $N$, namely $\rad N$ and $\rrad N$. If $f:M\to M'$ satisfies $f(N)\subseteq N'$, applying the inverse-image property gives $f(\rad N)\subseteq\rad N'$ and $f(\rrad N)\subseteq\rrad N'$. By Proposition~\ref{prop:closprop}(a) and the coordinate inclusions,
\begin{equation}\label{eq:radcoproduct}
\rad\Bigl(\bigoplus_iN_i\Bigr)=\bigoplus_i\rad N_i,
\qquad
\rrad\Bigl(\bigoplus_iN_i\Bigr)=\bigoplus_i\rrad N_i.
\end{equation}

\emph{Real annihilators and semireal rings.} If $N$ is a real submodule, then every $(N:m)$ is a real ideal \cite[Proposition~4.3(a)]{Cdec}, so if $P$ is a real prime submodule, then $(P:M)$ is a real prime ideal. A ring is \emph{semireal} if $-1$ is not a sum of squares, equivalently if it has a proper real ideal, equivalently a real prime ideal \cite[Theorem~2.3]{LamReal}; by the definition in Section~\ref{sec:definitions}, this agrees with semireality of $R$ as an $R$-module. For an ideal $I$, the real prime (resp.\ real maximal) ideals of $R/I$ are the $\p/I$ with $\p\supseteq I$ a real prime (resp.\ real maximal) ideal of $R$, since $(R/I)/(\p/I)\cong R/\p$. In particular, $R/I$ is semireal if and only if some real prime ideal of $R$ contains $I$.

\subsection{Quasi-coherent sheaves}

Let $\G\subseteq\F$ be quasi-coherent sheaves. On an affine chart $U=\Spec A$, write $\F|_U=\widetilde M$ and $\G|_U=\widetilde N$, with $N\subseteq M$. Recall $M_f=M[1/f]$, and put $M_\p=(A\setminus\p)^{-1}M$. Localization preserves exact sequences and commutes with tensor products \cite[Chapter~3]{AM}.

\emph{The affine dictionary.} For $x\in U$ corresponding to $\p$, and $s\in\Gamma(U,\F)$ corresponding to $m\in M$:
\begin{center}\small\renewcommand{\arraystretch}{1.35}
\begin{tabular}{@{}>{\raggedright}p{0.29\textwidth}>{\raggedright\arraybackslash}p{0.64\textwidth}@{}}
\textbf{sheaf construction} & \textbf{module description on $U=\Spec A$} \\ \hline
sections on a basic open & $\Gamma(D(f),\F)=M_f$ \\
local ring and stalk at $x$ & $\OO_{X,x}=A_\p$, $\F_x=M_\p$; the germ $s_x$ is $m/1$ \\
residue field & $\kappa(x)=A_\p/\p A_\p=\Frac(A/\p)$ \\
fibre and value at $x$ & $\F(x)=M\otimes_A\kappa(x)$; $s(x)=m\otimes1$ \\
quotient & $(\F/\G)|_U=\widetilde{M/N}$ \\
tensor product & $(\F\otimes\mathcal E)|_U=\widetilde{M\otimes_A L}$ if $\mathcal E|_U=\widetilde L$
\end{tabular}
\end{center}
Passing to a stalk localizes; passing to a fibre then reduces modulo the maximal ideal. If $x\in U=\Spec A$ is closed in $U$, its corresponding prime ideal $\m\subseteq A$ is maximal. Then $\kappa(x)=A/\m$ and $\F(x)=M/\m M$. A point closed in $X$ is closed in every affine neighbourhood, but a point closed in one affine neighbourhood need not be closed in $X$: the generic point of the spectrum of a discrete valuation ring is closed in the open subset it forms. Write $\G(x)$ for the image of $\G_x$ in $\F(x)$: the map from the fibre of $\G$ to that of $\F$ need not be injective. Kernels, images and quotients of quasi-coherent sheaves are computed on affine opens by the corresponding module operations, and everywhere by stalks. Inclusion and equality can be checked on stalks; quasi-coherent subsheaves and quotients of coherent sheaves are coherent. See \cite[II, Sections~1, 2 and~5]{Har}.

\emph{Extension.} On a Noetherian scheme, local data can be extended without leaving a given ambient sheaf: if $\F$ is quasi-coherent, $U\subseteq X$ is open and $\mathcal E_U\subseteq\F|_U$ is coherent, then there is a coherent $\mathcal E\subseteq\F$ with $\mathcal E|_U=\mathcal E_U$ \cite[Tag~01PF]{Stacks}. On an affine $U=\Spec A$, this applies to $\mathcal E_U=\widetilde C$ for every finitely generated submodule $C\subseteq\Gamma(U,\F)$, and gives $\Gamma(U,\mathcal E)=C$. For coherent $\mathcal E$, the annihilator $\Ann\mathcal E$ is a coherent ideal and $\Gamma(U,\Ann\mathcal E)=\Ann_A\Gamma(U,\mathcal E)$ on affine $U$.

The intrinsic sheaf criteria agree with the module criteria on affine charts.

\begin{proposition}[Affine criteria]\label{prop:sheafaffine}
Let $X$ be Noetherian and $\G\subseteq\F$ quasi-coherent.
\begin{enumerate}
\item[(a)] $\G$ is semiprime if and only if $\Gamma(U,\G)$ is a semiprime submodule of $\Gamma(U,\F)$ on every affine open $U$, if and only if this holds on an affine open cover.
\item[(b)] The same equivalences hold for reality. Every real subsheaf is semiprime.
\end{enumerate}
\end{proposition}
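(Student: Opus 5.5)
The plan is to compare the intrinsic sheaf criteria \eqref{eq:sheafidealcrit} and Definition~\ref{def:realsheaf} with the module criteria on affine charts. I would use two tools: the affine dictionary, which says quasi-coherent subsheaves of $\mathcal Q|_U=\widetilde{M/N}$ are the $\widetilde L$ with $L\subseteq M/N$, and the extension result \cite[Tag~01PF]{Stacks}. The three conditions in (a) are (i) $\G$ semiprime, (ii) $\Gamma(U,\G)$ semiprime in $\Gamma(U,\F)$ for every affine open $U$, and (iii) the same on some affine open cover. I will prove (i)$\Rightarrow$(ii)$\Rightarrow$(iii)$\Rightarrow$(i); the middle implication is trivial.

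\emph{(i)$\Rightarrow$(ii).} Let $U=\Spec A$ be affine, $Q=M/N=\Gamma(U,\mathcal Q)$, and suppose $q\in\Ann_A(q)Q$. The cyclic module $Aq$ is finitely generated, so by extension there is a coherent $\mathcal L\subseteq\mathcal Q$ with $\Gamma(U,\mathcal L)=Aq$. Set $\mathcal J=\Ann\mathcal L$; it is coherent and satisfies $\Gamma(U,\mathcal J)=\Ann_A(q)$. Then $\mathcal J\mathcal L=0$ globally. However, $\mathcal L\subseteq\mathcal J\mathcal Q$ holds only on $U$, not off it. To fix this, I will shrink $\mathcal L$ to $\mathcal L':=\mathcal L\cap\mathcal J\mathcal Q$. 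This is quasi-coherent, and it restricts on $U$ to $\widetilde{Aq}$ because $Aq\subseteq\Ann_A(q)Q$. Also $\mathcal J\mathcal L'\subseteq\mathcal J\mathcal L=0$, and $\mathcal L'\subseteq\mathcal J\mathcal Q$. Then \eqref{eq:sheafidealcrit} gives $\mathcal L'=0$, hence $q=0$.

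\emph{(iii)$\Rightarrow$(i).} Let $\mathcal J\mathcal L=0$ and $\mathcal L\subseteq\mathcal J\mathcal Q$. On each chart $U_i=\Spec A_i$ of the cover, write $L_i=\Gamma(U_i,\mathcal L)$ and $J_i=\Gamma(U_i,\mathcal J)$. Then $J_iL_i=0$ and $L_i\subseteq J_iQ_i$. By Proposition~\ref{prop:subcrit}, any $q\in L_i$ satisfies $q\in J_iQ_i\subseteq\Ann(q)Q_i$, so semiprimeness of $Q_i$ gives $q=0$. Hence $\mathcal L$ vanishes on a cover, so $\mathcal L=0$. This uses that $(\mathcal J\mathcal Q)|_U=\widetilde{JQ}$ on affines, which is standard.

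For reality in (b) the structure is the same. For (iii)$\Rightarrow$(i), a totally isotropic $\mathcal L\subseteq\mathcal Q^{\oplus n}$ gives on each chart a submodule $L_i\subseteq Q_i^n$ that is totally isotropic for modules, since tensor products localize. Module reality, in the submodule form noted before Definition~\ref{def:realsheaf}, forces $L_i=0$. For (i)$\Rightarrow$(ii), a tuple $(q_1,\dots,q_n)\in Q^n$ with $\sum q_i\otimes q_i=0$ generates a cyclic $C\subseteq Q^n$ that is totally isotropic on $U$. I extend $C$ to a coherent $\mathcal L\subseteq\mathcal Q^{\oplus n}$. The \textbf{main obstacle} is again that isotropy holds only on $U$. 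I would fix this by replacing $\mathcal L$ by a coherent subsheaf that agrees with $\mathcal L$ on $U$ but lies in the kernel of the induced form away from $U$. Concretely, let $\mathcal K$ be the kernel of the map $\mathcal L\to\mathcal{H}om(\mathcal L,\mathcal Q\otimes\mathcal Q)$ induced by $b_n$; this is quasi-coherent since $\mathcal L$ is coherent. Then I would take a coherent $\mathcal L''\subseteq\mathcal K$ extending $C$ on $U$, again by \cite[Tag~01PF]{Stacks}. This $\mathcal L''$ is totally isotropic everywhere, so reality of the sheaf forces $\mathcal L''=0$ and hence $q=0$. If this construction fails, I would instead take the sheaf image of $C$ under the quasi-coherent pushforward $j_*$ and intersect with $\mathcal Q^{\oplus n}$.

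Finally, every real subsheaf is semiprime: on affine charts real modules are semiprime \cite[Lemma~4.1]{Cdec}, and the equivalences in (a) and (b) then transfer this to sheaves.
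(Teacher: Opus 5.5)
Your proof is correct. For part (a), for the converse direction of (b), and for ``real implies semiprime'', it follows the paper. The paper also extends $Aq$ to a coherent $\mathcal E\subseteq\mathcal Q$ and uses that $\mathcal E\cap\mathcal I(\mathcal E)\mathcal Q$ must vanish. This is your $\mathcal L\cap\mathcal J\mathcal Q$, since $\mathcal I(\mathcal E)=\Ann\mathcal E$. The paper likewise checks the converses chart by chart.

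The one genuine difference is how you repair isotropy away from $U$ in (b).
\begin{itemize}
\item The paper takes the coherent image $\mathcal B$ of $\mathcal E\otimes\mathcal E\to\mathcal Q\otimes\mathcal Q$ and sets $\mathcal J=\Ann\mathcal B$, so that $\mathcal J|_U=\OO_U$. It then uses $\mathcal J\mathcal E$, which is isotropic because $b_n(ae\otimes a'e')=aa'\,b_n(e\otimes e')$. This needs only annihilators of coherent sheaves, the same tool as in (a).
\item You take the left kernel $\mathcal K$ of the pairing, i.e.\ the kernel of $\mathcal L\to\mathcal{H}om(\mathcal L,\mathcal Q\otimes\mathcal Q)$. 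This relies on the standard fact that $\mathcal{H}om$ from a coherent sheaf into a quasi-coherent one is quasi-coherent. It works, and $\mathcal K|_U=\widetilde C$ because $C$ is isotropic.
\end{itemize}

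Two small remarks on your version. First, $\mathcal K$ is already totally isotropic: for local sections $e\in\mathcal K$ and $e'\in\mathcal K\subseteq\mathcal L$ one has $b_n(e\otimes e')=0$. So your second application of the extension result is superfluous, and reality can be applied to $\mathcal K$ directly. Second, for the same extension, $\mathcal K\supseteq\mathcal J\mathcal L$, so your repair contains the paper's.

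Your fallback via $j_*$ would not work as phrased. The preimage of $j_*\widetilde C$ in $\mathcal Q^{\oplus n}$ contains all sections supported off $U$, and these need not be isotropic. Fortunately the fallback is not needed.
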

\begin{proof}
Put $\mathcal Q=\F/\G$; on an affine $U=\Spec A$, $\Gamma(U,\mathcal Q)=\Gamma(U,\F)/\Gamma(U,\G)$. The implications from an affine cover to the global conditions follow by checking sections on that cover. For the converse implications, we turn a counterexample on one affine open into a subsheaf forbidden by the global definition, using the extension fact above.

(a) Suppose $\G$ is semiprime, let $U=\Spec A$ be affine, $M=\Gamma(U,\mathcal Q)$ and $q\in\Ann_A(q)M$. Extend $Aq\subseteq M$ to a coherent subsheaf $\mathcal E\subseteq\mathcal Q$ with $\Gamma(U,\mathcal E)=Aq$. As $\mathcal E$ is coherent, $\mathcal I(\mathcal E)=\Ann\mathcal E$, and $\Gamma(U,\Ann\mathcal E)=\Ann_A(Aq)=\Ann_A(q)$. Hence
\[
q\in Aq\cap\Ann_A(q)M=\Gamma\bigl(U,\ \mathcal E\cap\mathcal I(\mathcal E)\mathcal Q\bigr).
\]
This is the module obstruction $L\cap\Ann_A(L)M$ for $L=Aq$, and semiprimeness in the form \eqref{eq:sheafcrit} gives $\mathcal E\cap\mathcal I(\mathcal E)\mathcal Q=0$. Thus $q=0$, and $\Gamma(U,\G)$ is a semiprime submodule of $\Gamma(U,\F)$.

Conversely, suppose that $\Gamma(U_i,\mathcal Q)$ is semiprime for every member $U_i=\Spec A_i$ of an affine open cover, and let $\mathcal L\subseteq\mathcal I(\mathcal L)\mathcal Q$. On $U_i$, the sections of $\mathcal I(\mathcal L)\mathcal Q$ are $\Gamma(U_i,\mathcal I(\mathcal L))\Gamma(U_i,\mathcal Q)$, and $\Gamma(U_i,\mathcal I(\mathcal L))$ kills $\Gamma(U_i,\mathcal L)$. Hence every $q\in\Gamma(U_i,\mathcal L)$ satisfies $q\in\Ann_{A_i}(q)\Gamma(U_i,\mathcal Q)$, so $q=0$. Thus $\mathcal L=0$.

(b) Suppose $\G$ is real. On an affine open $U=\Spec A$, put $Q=\Gamma(U,\mathcal Q)$ and suppose $\sum_i q_i\otimes q_i=0$ in $Q\otimes_A Q$. The cyclic submodule $Aq\subseteq Q^n$ generated by $q=(q_1,\dots,q_n)$ is totally isotropic, since $b_n(aq\otimes cq)=ac\sum_iq_i\otimes q_i=0$; let $\mathcal L_U\subseteq\mathcal Q^{\oplus n}|_U$ be the corresponding coherent subsheaf. Extend it to a coherent $\mathcal E\subseteq\mathcal Q^{\oplus n}$. The extension need not remain totally isotropic outside $U$. Let
\[
\mathcal B:=\operatorname{im}\bigl(\mathcal E\otimes\mathcal E\xrightarrow{b_n}\mathcal Q\otimes\mathcal Q\bigr)
\]
measure this failure; it is coherent, being a quotient of the coherent sheaf $\mathcal E\otimes\mathcal E$, although $\mathcal Q\otimes\mathcal Q$ need not be coherent. Set $\mathcal J:=\Ann\mathcal B$. Since $\mathcal B|_U=0$, we have $\mathcal J|_U=\OO_U$. Consequently $\mathcal L:=\mathcal J\mathcal E$ agrees with $\mathcal L_U$ on $U$, while its pairing vanishes globally: locally, $b_n(je\otimes j'e')=jj'\,b_n(e\otimes e')=0$, because $\mathcal J\mathcal B=0$. Reality gives $\mathcal L=0$, so $q_i=0$ for all $i$. Thus $Q$ is real.

Conversely, suppose the quotient modules on the members of an affine open cover are real and $\mathcal L\subseteq\mathcal Q^{\oplus n}$ is totally isotropic. On a member $U$, a section $q=(q_1,\dots,q_n)$ of $\mathcal L$ satisfies $b_n(q\otimes q)=\sum_i q_i\otimes q_i=0$ in $\Gamma(U,\mathcal Q\otimes\mathcal Q)=\Gamma(U,\mathcal Q)\otimes\Gamma(U,\mathcal Q)$, hence is zero. Thus $\mathcal L=0$.

Finally, real modules are semiprime: if $q=\sum_ja_jy_j$ with $a_jq=0$, then $q\otimes q=\sum_jy_j\otimes a_jq=0$, so $q=0$. Hence part (a) shows that every real subsheaf is semiprime.
\end{proof}

\section{Classification of prime and maximal submodules}\label{sec:class}

We classify prime, real prime and maximal submodules: a prime submodule is the inverse image of a proper subspace of a single fibre. Their sheaf counterparts follow in Section~\ref{sec:class-sheaves}.

\begin{definition}[Fibres and point kernels of a module]\label{def:modfibres}
For $\p\in\Spec R$, put $\kappa(\p)=\Frac(R/\p)$ and define the \emph{fibre} $M(\p):=M\otimes_R\kappa(\p)$. The \emph{fibre map} is the natural $R$-linear map $M\to M(\p)$, $m\mapsto m(\p):=m\otimes1$. For a submodule $N\subseteq M$, define $N(\p)$ as the image of $N\otimes_R\kappa(\p)$ in $M(\p)$.
For a subspace $W\subseteq M(\p)$, define the \emph{point kernel}
\begin{equation}\label{eq:modulepointkernel}
P_{\p,W}=\{m\in M:m(\p)\in W\}.
\end{equation}
The \emph{saturation} of $N+\p M$ is
\begin{equation}\label{eq:modulesaturation}
K(N,\p):=\{m\in M:sm\in N+\p M\text{ for some }s\notin\p\}.
\end{equation}
\end{definition}

\begin{lemma}[Fibre identities]\label{lem:fibreid}
Let $\p\in\Spec R$ and $N\subseteq M$.
\begin{enumerate}
\item[(a)] Reduction modulo $\p$ and localization give
\[
M(\p)\cong(M/\p M)\otimes_{R/\p}\kappa(\p)\cong M_\p/\p M_\p.
\]
\item[(b)] The image of $N$ in the fibre is
\[
N(\p)=\operatorname{span}_{\kappa(\p)}\{n\otimes1:n\in N\}
=\frac{N_\p+\p M_\p}{\p M_\p}.
\]
\item[(c)] Taking inverse images gives the saturation, and for $N=0$ the kernel of the fibre map:
\[
P_{\p,N(\p)}=K(N,\p),\qquad
\ker\bigl(M\to M(\p)\bigr)=K(0,\p)=\{m\in M:sm\in\p M\text{ for some }s\notin\p\}.
\]
\item[(d)] If $\p=\m$ is maximal, then $\kappa(\m)=R/\m$, $M(\m)=M/\m M$, the fibre map is the quotient map, $N(\m)=(N+\m M)/\m M$, and
\[
K(N,\m)=P_{\m,N(\m)}=N+\m M.
\]
\end{enumerate}
\end{lemma}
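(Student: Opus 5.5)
The plan is to prove the four parts in order, using only the standard facts about localization and tensor products recalled in Section~\ref{sec:basic}. Everything rests on the identification $\kappa(\p)=R_\p/\p R_\p$.

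For (a), we will use associativity of the tensor product. Since $\p$ kills $\kappa(\p)$, we have $M\otimes_R\kappa(\p)\cong(M\otimes_R R/\p)\otimes_{R/\p}\kappa(\p)=(M/\p M)\otimes_{R/\p}\kappa(\p)$. On the other side, $M\otimes_R\kappa(\p)\cong (M\otimes_R R_\p)\otimes_{R_\p}R_\p/\p R_\p\cong M_\p/\p M_\p$. Under the second isomorphism, $m\otimes 1$ corresponds to the class of $m/1$.

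For (b), the image of $N\otimes_R\kappa(\p)$ is spanned by the pure tensors $n\otimes c$. Since $n\otimes c=c\,(n\otimes1)$, this gives the first equality. Transporting through (a), $n\otimes(r/s)$ corresponds to the class of $rn/s\in N_\p$, so the image is the image of $N_\p$ in $M_\p/\p M_\p$, namely $(N_\p+\p M_\p)/\p M_\p$.

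For (c), we unwind membership. By (b), $m(\p)\in N(\p)$ means $m/1\in N_\p+\p M_\p$. The main point is to check that $N_\p+\p M_\p=(N+\p M)_\p$, which holds because $(\p M)_\p=\p M_\p$. We then apply the standard description of the inverse image of a localized submodule: $m/1\in L_\p$ if and only if $sm\in L$ for some $s\notin\p$. Taking $L=N+\p M$ gives $K(N,\p)$. The case $N=0$ gives the kernel formula, because $\ker(M\to M(\p))=P_{\p,0}$ and $0(\p)=0$.

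For (d), when $\m$ is maximal, $R/\m$ is already a field, so $\kappa(\m)=R/\m$. Part (a) then gives $M(\m)=M/\m M$ with the quotient map as fibre map, and $N(\m)=(N+\m M)/\m M$. For the saturation, note that $N+\m M\subseteq K(N,\m)$ trivially. Conversely, suppose $sm\in N+\m M$ with $s\notin\m$. Choose $t$ with $ts\equiv1\pmod{\m}$. Then $m=tsm+(1-ts)m\in N+\m M$. No step seems a real obstacle; the only care needed is the identification $(N+\p M)_\p=N_\p+\p M_\p$ in (c).
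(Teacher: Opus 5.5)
Your proposal is correct and follows essentially the same route as the paper: base change and localization for (a), spanning by the $n\otimes1$ and transport to $M_\p/\p M_\p$ for (b), and unwinding $m/1\in(N+\p M)_\p$ for (c). The only variation is in (d), where you prove $K(N,\m)=N+\m M$ directly by inverting $s$ modulo $\m$, whereas the paper obtains it from (c) by taking the preimage of $(N+\m M)/\m M$ under the quotient map $M\to M/\m M$; both arguments are immediate.
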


\begin{proof}
(a) The ideal $\p$ acts trivially on $\kappa(\p)$, and localization commutes with quotients. (b) The tensor image is spanned by the $n\otimes1$; in $M_\p/\p M_\p$ it is the image of $N_\p$. (c) By (b), $m(\p)\in N(\p)$ means $m/1\in N_\p+\p M_\p$, equivalently $sm\in N+\p M$ for some $s\notin\p$. (d) Here $R/\p$ is already a field; apply (c).
\end{proof}

\begin{lemma}[Prime quotients]\label{lem:primequot}
Let $P$ be a proper submodule of an $R$-module $M$ and $\p:=(P:M)$. Then $P$ is a prime submodule if and only if $\p$ is a prime ideal and $M/P$ is a torsion-free $R/\p$-module \cite[Lemma~1]{THS}. In this case $P=K(P,\p)$, and
\[
P\text{ is a real submodule}\iff\p\text{ is a real ideal}\iff\kappa(\p)\text{ is a real field}.
\]
\end{lemma}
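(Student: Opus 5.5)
The plan has three steps: the prime criterion, the saturation identity, and the reality equivalences.

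\emph{Prime criterion.} Put $Q=M/P$, so $\Ann_R(Q)=\p$. If $P$ is prime, then $\p$ is a prime ideal. Indeed, $1\notin\p$ since $P\ne M$. If $ab\in\p$ and $a\notin\p$, choose $m$ with $am\notin P$; then $b(am)\in P$, so primeness of $P$ gives $am\in P$ or $bM\subseteq P$, and the first is excluded, hence $b\in\p$. Torsion-freeness over $R/\p$ is the elementwise definition read backwards: if $r\notin\p$ and $rm\in P$, then $rM\not\subseteq P$, so $m\in P$. For the converse, suppose $\p$ is prime and $Q$ is torsion-free over $R/\p$. Take $rm\in P$ with $m\notin P$. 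Then $\bar r\cdot\bar m=0$ with $\bar m\neq0$, so torsion-freeness forces $r\in\p$, i.e.\ $rM\subseteq P$. This is the argument of \cite[Lemma~1]{THS}.

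\emph{Saturation.} We have $\p M\subseteq P$, so $P+\p M=P$. Hence $K(P,\p)=\{m:sm\in P\text{ for some }s\notin\p\}$, and torsion-freeness gives $K(P,\p)=P$.

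\emph{Reality.} The two equivalences are argued separately. For $\p$ real $\iff\kappa(\p)$ real, note that a domain $D=R/\p$ has $\Frac D$ real exactly when $\sum a_i^2=0$ in $D$ forces all $a_i=0$. Clearing denominators handles the passage from $\Frac D$ to $D$, and this condition on $D$ is precisely reality of the ideal $\p$. For $P$ real $\Rightarrow\p$ real, invoke \cite[Proposition~4.3(a)]{Cdec} as recalled after Proposition~\ref{prop:closprop}: every $(P:m)$ is real, and since $(P:m)=\p$ for $m\notin P$ by primeness, $\p$ is real.

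The substantive direction, and the main obstacle, is $\p$ real $\Rightarrow P$ real. I would reduce to fields. Set $D=R/\p$ and $K=\kappa(\p)$. Since $Q$ is a torsion-free $D$-module, $Q\hookrightarrow Q\otimes_D K=:V$, a $K$-vector space. Suppose $\sum_i q_i\otimes q_i=0$ in $Q\otimes_R Q=Q\otimes_D Q$. Mapping to $V\otimes_K V$ gives $\sum_i q_i\otimes q_i=0$ there. Choose a $K$-basis $(e_\alpha)$ of the finite-dimensional span of the $q_i$ and write $q_i=\sum_\alpha c_{i\alpha}e_\alpha$. The diagonal coefficient of $e_\alpha\otimes e_\alpha$ is $\sum_i c_{i\alpha}^2=0$ in $K$. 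Since $K$ is real, every $c_{i\alpha}=0$, so $q_i=0$ in $V$ and hence in $Q$ by injectivity. The only point needing care is that the relevant tensor identity survives the base change, which holds because tensoring is functorial. Note that injectivity of $Q\otimes Q\to V\otimes V$ is not required, since we only push the relation forward and pull the conclusion back through the injection $Q\hookrightarrow V$.
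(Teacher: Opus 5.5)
Your proof is correct and follows the paper's argument essentially step by step, including the reduction of $\p$ real $\Rightarrow P$ real to the vector space $V=Q\otimes_D\kappa(\p)$; your diagonal coordinates $e_\alpha^*\otimes e_\alpha^*$ are a special case of the paper's functionals $\lambda\otimes\lambda$. The only difference is that for $P$ real $\Rightarrow\p$ real you cite \cite[Proposition~4.3(a)]{Cdec}, whereas the paper argues directly: if $\sum_i a_i^2=0$ in $R/\p$ and $0\neq q\in Q$, then $\sum_i(a_iq)\otimes(a_iq)=0$ forces $a_iq=0$, hence $a_i=0$ by torsion-freeness.
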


\begin{proof}
Torsion-freeness of $M/P$ over $R/\p$ says that $rm\in P$ and $m\notin P$ imply $r\in\p$, i.e.\ $rM\subseteq P$; given $P\neq M$, this is primeness of $P$. It remains to see that $\p$ is a prime ideal if $P$ is a prime submodule: $\p\neq R$, and if $ab\in\p$ and $b\notin\p$, choose $m$ with $bm\notin P$; then $a(bm)\in P$ gives $a\in\p$. If $sm\in P+\p M=P$ with $s\notin\p$, then $m\in P$ by torsion-freeness; so $P=K(P,\p)$.

Put $Q=M/P$ and $A=R/\p$, so that $\kappa(\p)=\Frac(A)$. If $Q$ is real and $\sum_i a_i^2=0$ in $A$, choose $0\neq q\in Q$. Then $\sum_i(a_iq)\otimes(a_iq)=0$, so every $a_iq=0$; torsion-freeness gives $a_i=0$. Thus $A$ is real. Clearing denominators shows that $A$ is real if and only if $\kappa(\p)$ is real.

Conversely, suppose $\kappa(\p)$ is real and $\sum_iq_i\otimes q_i=0$ in $Q\otimes_R Q$. In $V=Q\otimes_A \kappa(\p)$, applying $\lambda\otimes\lambda$ for any $\kappa(\p)$-linear functional $\lambda:V\to \kappa(\p)$ gives $\sum_i\lambda(q_i\otimes1)^2=0$. Hence every functional vanishes on each $q_i\otimes1$, so these vectors are zero. Since $Q\to V$ is injective by torsion-freeness, all $q_i=0$. Thus $Q$ is real.
\end{proof}

\begin{lemma}\label{lem:simple}
For a submodule $P\subsetneq M$ the following are equivalent: \textup{(i)} $P$ is a maximal submodule; \textup{(ii)} $M/P$ is simple; \textup{(iii)} $M/P\cong R/\m$ for some maximal ideal $\m$. In this case $\m=(P:M)=(P:x)$ for every $x\in M\setminus P$, and $P$ is a prime submodule.
\end{lemma}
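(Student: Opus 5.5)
The proof follows the standard chain (i)$\Leftrightarrow$(ii)$\Leftrightarrow$(iii) and then reads off the annihilators.

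For (i)$\Leftrightarrow$(ii) I will use the correspondence between submodules of $M/P$ and submodules of $M$ containing $P$. Under it, $P$ is maximal exactly when $M/P\neq0$ has no submodules besides $0$ and itself, that is, when $M/P$ is simple.

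For (ii)$\Rightarrow$(iii), pick $0\neq x+P$ in $M/P$. Simplicity gives $R(x+P)=M/P$, so $M/P\cong R/\Ann_R(x+P)=R/(P:x)$. A cyclic module $R/I$ is simple iff $I$ is a maximal ideal, again by the correspondence theorem. Hence $\m:=(P:x)$ is maximal and $M/P\cong R/\m$. The converse (iii)$\Rightarrow$(ii) is the same observation: $R/\m$ is a field, so its $R$-submodules are exactly its ideals, namely $0$ and $R/\m$.

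For the final assertions, assume $M/P\cong R/\m$. Then
\[
(P:M)=\Ann_R(R/\m)=\m .
\]
For $x\in M\setminus P$, the image of $x$ is a nonzero element of the field $R/\m$, and its annihilator is again $\m$. Thus $(P:x)=\m$ for every such $x$. This says $\Ann_R(q)=\Ann_R(M/P)$ for every nonzero $q\in M/P$, and $M/P\neq0$, so $M/P$ is prime by definition and $P$ is a prime submodule. Alternatively, one can cite Lemma~\ref{lem:primequot}: $R/\m$ is a torsion-free module over the field $R/\m$.

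There is no real obstacle here. The only step needing care is in (ii)$\Rightarrow$(iii): the maximal ideal $\m$ must be identified independently of the chosen generator $x$. This is settled by the computation $(P:x)=\Ann_R(M/P)$, which uses that the quotient is a field.
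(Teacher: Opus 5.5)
Your proof is correct and follows the paper's argument essentially step for step. Both use the correspondence theorem for (i)$\Leftrightarrow$(ii), and the cyclicity of a simple module together with ``$R/I$ is simple iff $I$ is maximal'' for (ii)$\Leftrightarrow$(iii). Both then observe that every nonzero element of $M/P\cong R/\m$ has annihilator $\m=\Ann_R(M/P)$, which gives $(P:x)=(P:M)=\m$ and primeness of $P$.
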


\begin{proof}
(i)$\Leftrightarrow$(ii) is the correspondence between submodules of $M$ containing $P$ and submodules of $M/P$. A simple module is generated by each of its nonzero elements $\bar x$, so it is isomorphic to $R/\Ann_R(\bar x)$; and $R/I$ is simple if and only if $I$ is a maximal ideal. This gives (ii)$\Leftrightarrow$(iii). In (iii), $M/P$ is a one-dimensional $R/\m$-vector space, so every nonzero element has annihilator $\m=\Ann_R(M/P)$; that is, $(P:x)=(P:M)=\m$ for $x\notin P$, and $P$ is a prime submodule.
\end{proof}

\begin{theorem}[Classification of submodules]\label{thm:modclass}
Let $R$ be a commutative ring and $M$ any $R$-module.
\begin{enumerate}
\item[(a)] $(\p,W)\mapsto P_{\p,W}$ is a bijection from pairs with $\p\in\Spec R$ and $W\subsetneq M(\p)$ onto the prime submodules of $M$. Its inverse sends $P$ to $\p=(P:M)$ and $W=P(\p)$.
\item[(b)] This bijection restricts to real prime submodules by requiring $\kappa(\p)$ real.
\item[(c)] It restricts to maximal submodules by requiring $\p$ maximal and $W$ a hyperplane, and to real maximal submodules by additionally requiring $\kappa(\p)$ real.
\end{enumerate}
\end{theorem}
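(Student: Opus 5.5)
The plan is to check three things for (a): that every point kernel $P_{\p,W}$ with $W\subsetneq M(\p)$ is prime, that it recovers $(\p,W)$, and that every prime submodule is such a kernel. Parts (b) and (c) should then follow from Lemma~\ref{lem:primequot} and Lemma~\ref{lem:simple}.

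\emph{Point kernels are prime.} Put $P=P_{\p,W}$. It is the inverse image of $W$ under the $R$-linear fibre map, so it is a submodule. It is proper because the image of $M$ spans $M(\p)$ over $\kappa(\p)$ by Lemma~\ref{lem:fibreid}(b), so that image is not contained in $W$. For primeness, take $rm\in P$ with $m\notin P$. If $r\notin\p$, then $r$ is a unit in $\kappa(\p)$, and $r\,m(\p)\in W$ forces $m(\p)\in W$, which is a contradiction. Hence $r\in\p$, and then $rM\subseteq\ker(M\to M(\p))\subseteq P$. So $P$ is prime.

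\emph{Recovering the pair.} The argument above already gives $\p\subseteq(P:M)$. Conversely, if $r\notin\p$, pick $m\notin P$; then $rm\notin P$ by the unit argument, so $r\notin(P:M)$. Thus $(P:M)=\p$. Next I would show $P(\p)=W$. The inclusion $\subseteq$ holds because $P(\p)$ is the $\kappa(\p)$-span of values $m(\p)\in W$. For $\supseteq$, write any $w\in W$ as $\sum \lambda_i m_i(\p)$ and clear denominators using $M(\p)\cong M_\p/\p M_\p$. This gives $w=s^{-1}m(\p)$ for some $m\in M$ and $s\notin\p$. Then $m(\p)=sw\in W$, so $m\in P$ and $w\in P(\p)$.

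\emph{Every prime is a point kernel.} Let $P$ be prime and set $\p=(P:M)$, which is prime by Lemma~\ref{lem:primequot}. By Lemma~\ref{lem:fibreid}(c) and Lemma~\ref{lem:primequot}, $P_{\p,P(\p)}=K(P,\p)=P$. Since $P$ is proper, $P(\p)\ne M(\p)$, and the two constructions are mutually inverse. The one step needing care is identifying $W$ with $P(\p)$ in the recovery step; this is where I expect the clearing-of-denominators detail.

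\emph{Parts (b) and (c).} Part (b) is immediate from the equivalence $P$ real $\iff\kappa(\p)$ real in Lemma~\ref{lem:primequot}, since $\p=(P:M)$. For (c), a maximal $P$ is prime with $M/P\cong R/\m$ and $(P:M)=\m$ by Lemma~\ref{lem:simple}. When $\p=\m$ is maximal, the fibre map is the surjection $M\to M/\m M$ by Lemma~\ref{lem:fibreid}(d), so $M/P_{\m,W}\cong M(\m)/W$. This quotient is simple exactly when $W$ is a hyperplane. Conversely, if $P_{\p,W}$ is maximal, Lemma~\ref{lem:simple} shows $\p=(P:M)$ is maximal, and then $M(\p)/W$ is simple, so $W$ is a hyperplane. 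The real maximal case combines this with (b).
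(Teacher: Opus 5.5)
Your proof is correct and follows the paper's argument essentially step for step. The paper gets primeness of $P_{\p,W}$ by embedding $M/P$ into the torsion-free $R/\p$-module $M(\p)/W$ and invoking Lemma~\ref{lem:primequot}, which is your unit argument in another form, and it recovers $W$ by the same clearing of denominators. Your deduction of $P(\p)\neq M(\p)$ from $P=P_{\p,P(\p)}\neq M$ is a slightly quicker substitute for the paper's embedding of $M/P$ into the nonzero fibre $M(\p)/P(\p)$.
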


\begin{proof}
(a) Put $P=P_{\p,W}$. The map $M\to M(\p)/W$, $m\mapsto m(\p)+W$, has kernel $P$ and nonzero image, since the values $m(\p)$ span $M(\p)$ and $W$ is proper. Thus $M/P$ embeds as a nonzero submodule of $M(\p)/W$. It is therefore torsion-free over $R/\p$; hence $\Ann_R(M/P)=\p$, and Lemma~\ref{lem:primequot} shows that $P$ is prime. Clearly $P(\p)\subseteq W$. For the reverse inclusion, every $w\in W$ has the form $m(\p)/\bar s$ with $s\notin\p$. Then $m(\p)=\bar s w\in W$, so $m\in P$. Thus $P(\p)=W$.

Conversely, let $P$ be prime and $\p=(P:M)$. Lemma~\ref{lem:primequot} and Lemma~\ref{lem:fibreid}(c) give
\[
P=K(P,\p)=P_{\p,P(\p)}.
\]
The subspace $P(\p)$ is proper because $M/P$ is nonzero and torsion-free over $R/\p$, hence embeds into its nonzero fibre $M(\p)/P(\p)$. The two constructions are therefore inverse.

(b) Lemma~\ref{lem:primequot} says precisely that a prime submodule with annihilator $\p$ is real if and only if $\kappa(\p)$ is real.

(c) By Lemma~\ref{lem:simple}, a maximal $P$ is prime and $M/P\cong R/\m$ for the maximal ideal $\m=(P:M)$. Thus its quotient is one-dimensional over $R/\m$. Conversely, for maximal $\m$, the fibre map $M\to M(\m)=M/\m M$ is surjective, so
\[
M/P_{\m,W}\cong M(\m)/W.
\]
This quotient is simple exactly when $W$ is a hyperplane; by (b), a maximal submodule is real exactly when its maximal residue field is real. No finite-generation assumption is required. This proves Theorem~\ref{thmD}(a).
\end{proof}

The prime classification is a fibre formulation of classical descriptions by torsion-free quotients and linear equations; see \cite[Lemma~6, Theorems~8 and~10]{THS}. We include the short proof to make the correspondence $P\leftrightarrow(\p,P(\p))$ explicit.

\begin{remark}[Prime quotients]\label{rem:primequotients}
For a prime $\p$ and $W\subsetneq M(\p)$, put $P=P_{\p,W}$. Since $\p M\subseteq P$ and $P(\p)=W$, localization and passage to the fibre give
\begin{equation}\label{eq:primefibrequotient}
(M/P)_\p\cong(M/P)\otimes_R\kappa(\p)\cong M(\p)/W.
\end{equation}
The natural map $M/P\hookrightarrow M(\p)/W$ need not be surjective: for $R=M=\mathbb Z$, $\p=(0)$ and $W=0$ it is $\mathbb Z\hookrightarrow\mathbb Q$. For maximal $\p=\m$, it is an isomorphism.

\end{remark}

\section{Classification of prime and maximal subsheaves}\label{sec:class-sheaves}

The classifications below prove Theorem~\ref{thmD}(b). Prime quotients may be attached to arbitrary points; maximal quotients are attached to closed points. No coherence assumption on $\F$ is needed.

\emph{Point sheaves.} For $x\in X$ let $j_x:\Spec\kappa(x)\to X$ be the canonical morphism, induced on each affine neighbourhood $\Spec A$ of $x\leftrightarrow\p$ by $A\to A_\p\to\kappa(x)$. For a $\kappa(x)$-vector space $V$, the point sheaf $(j_x)_*V$ has sections $V$ on opens containing $x$ and zero elsewhere; for closed $x$ it is the usual skyscraper sheaf. For a subspace $W\subseteq\F(x)$, define the \emph{point kernel}
\[
\mathcal P_{x,W}:=\ker\bigl(\F\longrightarrow(j_x)_*(\F(x)/W)\bigr).
\]

Since $j_x^*\F=\F_x\otimes_{\OO_{X,x}}\kappa(x)=\F(x)$, adjunction gives
\[
\Hom_{\OO_X}\bigl(\F,(j_x)_*V\bigr)
\cong\Hom_{\kappa(x)}\bigl(\F(x),V\bigr).
\]
Thus a morphism $\F\to(j_x)_*V$ corresponds to a linear map $\lambda:\F(x)\to V$ and has kernel $\mathcal P_{x,\ker\lambda}$. Its image is isomorphic to $\F/\mathcal P_{x,\ker\lambda}$ and has stalk $\operatorname{im}\lambda$ at $x$.

A prime quotient has a single attached point. This is the global step beyond the module classification. On an affine open $U=\Spec A$ containing $\eta$, write $\p_\eta\subseteq A$ for the prime ideal corresponding to $\eta$.

\begin{lemma}[Affine sections of prime sheaves]\label{lem:primesections}
Let $\mathcal Q$ be a prime quasi-coherent sheaf on $X$. There is a point $\eta\in X$ such that, for every affine open $U=\Spec A$, $\Gamma(U,\mathcal Q)\neq0$ if and only if $\eta\in U$; in this case $\Gamma(U,\mathcal Q)$ is a prime $A$-module with annihilator $\p_\eta$. Thus prime subsheaves have prime or whole-module sections on affine opens.
\end{lemma}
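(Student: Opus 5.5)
The plan is to work with the quasi-coherent ideal $\mathcal I:=\mathcal I(\mathcal Q)$ and show that it defines an integral closed subscheme whose generic point is $\eta$. The first step is to show that $\mathcal I(\mathcal L)=\mathcal I$ for every nonzero quasi-coherent $\mathcal L\subseteq\mathcal Q$, which is exactly primeness in the form \eqref{eq:sheafcrit}. Next I will show that $\mathcal I$ is a prime ideal sheaf. Suppose quasi-coherent ideals $\mathcal J,\mathcal K$ satisfy $\mathcal J\mathcal K\subseteq\mathcal I$ and $\mathcal K\not\subseteq\mathcal I$. Then $\mathcal L:=\mathcal K\mathcal Q$ is a nonzero quasi-coherent subsheaf killed by $\mathcal J$, so $\mathcal J\subseteq\mathcal I(\mathcal L)=\mathcal I$. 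On each affine $U=\Spec A$, write $I_U:=\Gamma(U,\mathcal I)$. To transfer this to elements of $A$, I use the extension fact from Section~\ref{sec:basic}: a finitely generated ideal of $A$ extends to a coherent ideal sheaf on $X$ with the prescribed sections on $U$. Since $X$ is Noetherian, all ideals are finitely generated, so every ideal of $A$ arises this way. The difficulty is that the extended sheaves $\mathcal J,\mathcal K$ may no longer satisfy $\mathcal J\mathcal K\subseteq\mathcal I$ outside $U$.

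To avoid this problem, I will instead work directly with subsheaves of $\mathcal Q$. Let $U=\Spec A$ be affine with $Q_U:=\Gamma(U,\mathcal Q)\neq0$. I claim $Q_U$ is a prime $A$-module. Take $0\neq q\in Q_U$ and extend $Aq$ to a coherent $\mathcal E\subseteq\mathcal Q$ with $\Gamma(U,\mathcal E)=Aq$. Then $\mathcal I(\mathcal E)=\Ann\mathcal E$, whose sections on $U$ are $\Ann_A(q)$. Primeness gives $\Ann\mathcal E=\mathcal I$, so $\Ann_A(q)=\Gamma(U,\mathcal I)$ for every nonzero $q$. Applying this to all $q$, and using that quasi-coherent $\mathcal I$ has sections on $U$ computed compatibly, yields $\Ann_A(q)=\Ann_A(Q_U)=I_U$. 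By Lemma~\ref{lem:primequot}, $Q_U$ is prime and $I_U=\p_U$ is a prime ideal of $A$. In particular, once $Q_U\ne0$, the same ideal $I_U$ is the annihilator of every nonzero element.

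Next I locate $\eta$. Consider the closed subset $Z:=\operatorname{Supp}$, that is, $V(\mathcal I)$. On each affine $U$ with $Q_U\neq0$, the set $Z\cap U=V(\p_U)$ is irreducible with generic point $\p_U$. On affines with $Q_U=0$, I need $\eta\notin U$. I will show that all the points $\p_U$ coincide and then define $\eta$ as their common value. Given two affines $U,V$ with nonzero sections, I first take a basic open $D\subseteq U\cap V$ meeting the support. The main claim is that $\mathcal Q|_W\neq0$ on an affine $W$ forces $\p_W$, viewed as a point, to lie in every affine $U'$ where $Q_{U'}\neq0$. For this, restriction to $D(f)\subseteq U$ localizes $Q_U$. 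Since $Q_U$ is torsion-free over $A/\p_U$, the localization $(Q_U)_f$ is nonzero exactly when $f\notin\p_U$, that is, exactly when $\p_U\in D(f)$. Hence the nonzero-section condition on basic opens of $U$ is "contains $\p_U$". Comparing $U$ and $V$ on basic opens common to both, which form a basis of $U\cap V$, shows that the points $\p_U$ and $\p_V$ lie in exactly the same such opens. They are therefore equal, since a scheme is $T_0$.

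It remains to treat affines with $Q_U=0$, and to check that $Q_U\neq0$ whenever $\eta\in U$. If $\eta\in U$, choose a basic open $D$ containing $\eta$ that is also basic in some $V$ with $Q_V\ne0$. Then $Q_D\neq0$, and $Q_D$ is a localization of $Q_U$, so $Q_U\neq0$. Conversely, if $Q_U\ne0$ then $\eta=\p_U\in U$. A nonzero $\mathcal Q$ has some affine with nonzero sections, so $\eta$ exists. The final statement of the lemma follows by applying this to $\mathcal Q=\F/\mathcal P$. The step I expect to be the main obstacle is the element-level annihilator argument. It requires that the coherent extension $\mathcal E$ of $Aq$ have $\mathcal I(\mathcal E)=\Ann\mathcal E$ with sections $\Ann_A(q)$ on $U$, and that $\Gamma(U,\mathcal I(\mathcal Q))$ equal $\Ann_A(Q_U)$. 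The second point needs care, because $\mathcal I(\mathcal Q)$ is a sum of killing ideals. I would handle it with the same extension trick, extending an ideal $J\subseteq\Ann_A(Q_U)$ and cutting it down by $\Ann$ of $\mathcal I$-type obstructions, as in the proof of Proposition~\ref{prop:sheafaffine}(b).
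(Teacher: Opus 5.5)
Your per-affine step (extend $Aq$ to a coherent $\mathcal E\subseteq\mathcal Q$ and use $\Ann\mathcal E=\mathcal I(\mathcal Q)$) is the paper's. What you flag as the main obstacle is not one: $\mathcal I(\mathcal Q)\mathcal Q=0$ gives $\Gamma(U,\mathcal I)\subseteq\Ann_A(Q_U)\subseteq\Ann_A(q)=\Gamma(U,\mathcal I)$, and your intersection over all nonzero $q$ gives the same.

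The genuine gap is the identification of the points $\p_U$ across different affines. Comparing on opens that are basic in both $U$ and $V$ only shows two things: $\p_U\in U\cap V$ if and only if $\p_V\in U\cap V$, and in that case the two points agree. If neither point lies in $U\cap V$ (e.g.\ $U\cap V$ is empty or disjoint from $V(\mathcal I)$), the comparison is vacuous. Choosing ``a basic open $D\subseteq U\cap V$ meeting the support'' assumes what must be proved, so your ``main claim'' that $\p_W$ lies in every affine with nonzero sections is never established. It cannot follow from what you have used so far. On the affine line with doubled origin, take the sum of the two skyscrapers $\kappa$ at the origins. Since no affine open contains both origins, on every affine open its sections are zero or prime with annihilator $\Gamma(U,\mathcal I(\mathcal Q))$. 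Yet this sheaf has two attached points and is not prime.

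So primeness must be applied a second time, globally. The paper takes the reduced closure $Z_U$ of $\p_U$ in $X$ and the quasi-coherent subsheaf $\mathcal T\subseteq\mathcal Q$ of local sections killed by the coherent ideal $\mathcal I_{Z_U}$. Since $\mathcal T|_U=\mathcal Q|_U\neq0$, primeness gives $\mathcal I_{Z_U}\subseteq\mathcal I(\mathcal T)=\mathcal I(\mathcal Q)$. Hence $\Gamma(V,\mathcal I_{Z_U})\subseteq\p_V$, i.e.\ $\p_V\in Z_U$; by symmetry the closures coincide, so $\p_U=\p_V$.

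Your abandoned first paragraph would also close the gap if you kept it at the level of ideal sheaves. It correctly shows that $\mathcal J\mathcal K\subseteq\mathcal I$ and $\mathcal K\not\subseteq\mathcal I$ force $\mathcal J\subseteq\mathcal I$. If $V(\mathcal I)=Z_1\cup Z_2$ with $Z_1,Z_2$ closed, then $\mathcal I_{Z_1}^n\mathcal I_{Z_2}^n\subseteq\mathcal I$ for some $n$ on a Noetherian scheme, and applying this shows $V(\mathcal I)$ is irreducible. Each $\p_U$ is then its generic point, and no passage to elements of $A$ is needed. Once $\eta$ is well defined, your final paragraph is fine.
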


\begin{proof}
Let $U=\Spec A$, $M=\Gamma(U,\mathcal Q)$ and $0\neq q\in M$. Since $A$ is Noetherian, $\widetilde{Aq}$ is coherent. By the extension fact in Section~\ref{sec:basic}, choose a coherent $\mathcal E\subseteq\mathcal Q$ with $\mathcal E|_U=\widetilde{Aq}$. Since $\mathcal E\neq0$, the prime criterion in \eqref{eq:sheafcrit} gives $\mathcal I(\mathcal Q)=\mathcal I(\mathcal E)=\Ann\mathcal E$, and $\Gamma(U,\Ann\mathcal E)=\Ann_A(q)$. Hence
\[
\Ann_A(q)=\Gamma(U,\mathcal I(\mathcal Q))
\subseteq\Ann_A(M)\subseteq\Ann_A(q).
\]
Thus $M$ is prime whenever it is nonzero, with prime annihilator $\p_U=\Gamma(U,\mathcal I(\mathcal Q))$. Let $\eta_U$ correspond to $\p_U$ and let $Z_U$ be its reduced closure in $X$.

Let $\mathcal T\subseteq\mathcal Q$ be the subsheaf of local sections annihilated by $\mathcal I_{Z_U}$. Since $\mathcal I_{Z_U}$ is coherent, $\mathcal T$ is quasi-coherent. Moreover, $\mathcal T|_U=\mathcal Q|_U\neq0$, because $\mathcal I_{Z_U}|_U=\widetilde{\p_U}$ annihilates $\mathcal Q|_U$. Primeness therefore gives
\[
\mathcal I_{Z_U}\subseteq\mathcal I(\mathcal T)=\mathcal I(\mathcal Q).
\]
For another affine $U'$ with nonzero sections, taking sections gives $\Gamma(U',\mathcal I_{Z_U})\subseteq\p_{U'}$, hence $\eta_{U'}\in Z_U$; symmetrically $\eta_U\in Z_{U'}$. The closures, and hence their generic points, coincide. Call the common point $\eta$. By Lemma~\ref{lem:primequot}, $M$ is torsion-free over $A/\p_U$, so it embeds into $M_{\p_U}=\mathcal Q_\eta\neq0$. Every affine open containing $\eta$ therefore has nonzero sections.
\end{proof}

\begin{proposition}[Classification of prime subsheaves]\label{prop:primeclass}
Let $x\in X$ and $W\subseteq\F(x)$ be a $\kappa(x)$-subspace.
\begin{enumerate}
\item[(a)] $\mathcal P_{x,W}$ is quasi-coherent. Its sections over $U$ are the sections $s$ with $s(x)\in W$ if $x\in U$, and all sections of $\F$ otherwise. Moreover, $\G\subseteq\mathcal P_{x,W}$ if and only if $\G(x)\subseteq W$. On an affine $U=\Spec A$ containing $x\leftrightarrow\p$, with $M=\Gamma(U,\F)$ and $N=\Gamma(U,\G)$,
\[
\Gamma(U,\mathcal P_{x,\G(x)})=P_{\p,N(\p)}=K(N,\p).
\]

\item[(b)] The assignment $(x,W)\mapsto\mathcal P_{x,W}$ is a bijection from pairs with $W\subsetneq\F(x)$ onto the prime quasi-coherent subsheaves of $\F$. Its inverse sends $\mathcal P$ to the point $x$ attached to $\F/\mathcal P$ in Lemma~\ref{lem:primesections} and $W=\mathcal P(x)$.
\end{enumerate}
\end{proposition}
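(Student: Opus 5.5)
For part (a), I would compute sections of $\mathcal P_{x,W}$ straight from the definition as a kernel. Sections of $(j_x)_*V$ over $U$ are $V$ if $x\in U$ and $0$ otherwise. The morphism $\F\to(j_x)_*(\F(x)/W)$ sends a section $s$ over $U\ni x$ to $s(x)+W$. Hence the kernel has the stated sections: $\{s: s(x)\in W\}$ if $x\in U$, and all of $\Gamma(U,\F)$ otherwise.

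Quasi-coherence I would check on an affine $U=\Spec A$ with $M=\Gamma(U,\F)$. If $x\notin U$, the restriction is $\F|_U$. If $x\in U$ corresponds to $\p$, the sections over $D(f)$ are $\{m/f^k\in M_f: (m/f^k)(\p)\in W\}$ when $f\notin\p$, and $M_f$ otherwise. I claim this is $\widetilde{P_{\p,W}}$ with $P_{\p,W}\subseteq M$ the module point kernel. For $f\notin\p$, $f$ is a unit in $\kappa(\p)$, so $m/f^k$ has value in $W$ iff $m\in P_{\p,W}$; thus the sections are $(P_{\p,W})_f$. For $f\in\p$, we have $\p M\subseteq P_{\p,W}$, so $fM\subseteq P_{\p,W}$ and $(P_{\p,W})_f=M_f$. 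These agree, so $\mathcal P_{x,W}|_U=\widetilde{P_{\p,W}}$.

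The criterion $\G\subseteq\mathcal P_{x,W}\iff\G(x)\subseteq W$ I would get from the adjunction. $\G\subseteq\mathcal P_{x,W}$ means the composite $\G\to\F\to(j_x)_*(\F(x)/W)$ vanishes. Under adjunction this composite corresponds to $\G(\text{fibre})\to\F(x)\to\F(x)/W$, whose image is $\G(x)+W$ modulo $W$. The affine formula then follows from the local description together with Lemma~\ref{lem:fibreid}(b),(c): with $W=\G(x)=N(\p)$ this gives $P_{\p,N(\p)}=K(N,\p)$.

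For part (b), I would first show $\mathcal P_{x,W}$ is prime when $W\ne\F(x)$, using the criterion \eqref{eq:sheafcrit}. The quotient $\mathcal Q$ embeds into $(j_x)_*(\F(x)/W)$, so it is nonzero: its stalk at $x$ is $\F(x)/W\ne0$. For a nonzero quasi-coherent $\mathcal L\subseteq\mathcal Q$, choose an affine $U\ni x$ with $\Gamma(U,\mathcal L)\ne0$. This is possible because sections of $\mathcal Q$ vanish on opens missing $x$, and $\mathcal L\ne0$ has nonzero sections on some affine, which must then contain $x$. On affine charts, Theorem~\ref{thm:modclass}(a) shows $\Gamma(U,\mathcal Q)=M/P_{\p,W}$ is prime with annihilator $\p$. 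Hence $\Ann_A\Gamma(U,\mathcal L)=\p$ whenever it is nonzero.

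I then need $\mathcal I(\mathcal L)=\mathcal I(\mathcal Q)$. I would take $\mathcal I(\mathcal Q)$ to be the ideal of the reduced closure $Z$ of $x$, or at least to show both ideals equal $\mathcal I_Z$. An ideal kills $\mathcal L$ iff on each affine $U\ni x$ its sections lie in $\p$, and on affines missing $x$ there is no condition. This is exactly the condition $\mathcal J\subseteq\mathcal I_Z$, since $\mathcal I_Z$ has sections $\p$ near $x$ and is determined by its generic condition. The same argument applies to $\mathcal Q$, so the ideals coincide and $\mathcal P_{x,W}$ is prime.

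Injectivity of $(x,W)\mapsto\mathcal P_{x,W}$: the quotient's attached point in Lemma~\ref{lem:primesections} is $x$, since affine sections are nonzero exactly when $U\ni x$. On an affine $U\ni x$, the module bijection of Theorem~\ref{thm:modclass}(a) recovers $W=P(\p)=\mathcal P(x)$.

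For surjectivity, let $\mathcal P$ be prime and take $\eta$ from Lemma~\ref{lem:primesections}; set $W=\mathcal P(\eta)$. On each affine $U\ni\eta$, $\Gamma(U,\mathcal P)$ is a prime submodule with annihilator $\p_\eta$. By Theorem~\ref{thm:modclass} it equals $P_{\p_\eta,W}=\Gamma(U,\mathcal P_{\eta,W})$, and $W$ is proper there. On affines missing $\eta$, both sheaves have all sections, by the lemma and by part (a). So the two quasi-coherent sheaves agree on an affine cover and are equal.

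The main obstacle is the identification $\mathcal I(\mathcal L)=\mathcal I(\mathcal Q)$ in the primeness step, which passes between local annihilators and a global ideal sheaf. There I would reuse the closure-of-$\eta$ argument from the proof of Lemma~\ref{lem:primesections}.
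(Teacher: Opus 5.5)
Your proposal is correct and follows essentially the same route as the paper: the direct kernel computation for (a), primeness of $\mathcal P_{x,W}$ via $\mathcal I(\mathcal L)=\mathcal I_Z$, recovery of $(x,W)$, and surjectivity from Lemma~\ref{lem:primesections} together with Theorem~\ref{thm:modclass}(a). The one step you leave implicit is that $\mathcal J\mathcal L=0$ forces $\Gamma(U,\mathcal J)\subseteq\p$ on \emph{every} affine $U\ni x$, not only on the one affine where you found a nonzero section of $\mathcal L$; the paper fills this by noting that nonzero sections of $\mathcal Q$ stay nonzero at $x$, so $\mathcal L$ has nonzero sections on every affine neighbourhood of $x$ (equivalently, the condition reduces to $\mathcal J_x\subseteq\m_x$).
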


\begin{proof}
(a) On any affine open containing $x$, the target is the sheaf associated to the $A$-module $V=\F(x)/W$, with scalars acting through $A\to\kappa(x)$; on an affine open not containing $x$, it is zero. Thus the target and the kernel are quasi-coherent. The section description follows from the map to the fibre. For $W=\G(x)$, its module of sections is $P_{\p,N(\p)}=K(N,\p)$ by Lemma~\ref{lem:fibreid}(c). The containment assertion follows from the fibre map.

(b) Let $W\subsetneq\F(x)$, and let $\mathcal Q\subseteq(j_x)_*V$, $V=\F(x)/W$, be the image of $\F\to(j_x)_*V$; it is nonzero, as its stalk at $x$ is $V$. We show that $\mathcal Q\cong\F/\mathcal P_{x,W}$ is prime with attached point $x$. Let $Z$ be the reduced closure of $x$; then $\mathcal I_Z(j_x)_*V=0$. For $0\neq\mathcal L\subseteq\mathcal Q$, choose an affine $U=\Spec A$ with nonzero sections. Necessarily $x\in U$, and every nonzero section is a vector in $V$ with annihilator $\p_x$. It remains nonzero at $x$, since $A\setminus\p_x$ acts invertibly on $V$; hence $\mathcal L$ has nonzero sections on every affine neighbourhood of $x$. Any quasi-coherent ideal sheaf $\mathcal J$ killing $\mathcal L$ therefore satisfies $x\in V(\mathcal J)$, hence $\mathcal J\subseteq\mathcal I_Z$: on affines containing $x$ this follows from $\Gamma(U,\mathcal J)\subseteq\p_x$, and on opens not containing $x$ we have $\mathcal I_Z=\OO_X$, since every open meeting $Z$ contains its generic point $x$. Thus $\mathcal I(\mathcal L)=\mathcal I_Z$, and \eqref{eq:sheafcrit} proves primeness. The point attached to $\mathcal Q$ is $x$, since nonzero sections embed into their localization at $\p_x$.

Conversely, let $\mathcal Q=\F/\mathcal P$ be prime, with attached point $x$ from Lemma~\ref{lem:primesections}. On each affine $U=\Spec A$ containing $x$, $\Gamma(U,\mathcal P)$ is a prime submodule of $\Gamma(U,\F)$ with annihilator $\p_x$. Theorem~\ref{thm:modclass}(a) identifies it with the inverse image of $\mathcal P(x)$ in the fibre at $x$. On affines not containing $x$, $\mathcal Q$ has zero sections, so $\mathcal P$ equals $\F$. These affine descriptions give $\mathcal P=\mathcal P_{x,\mathcal P(x)}$.

Finally, $\mathcal P_{x,W}(x)=W$, because $\F_x\to\F(x)/W$ is surjective. Together with recovery of the attached point, this proves uniqueness of the pair.
\end{proof}

\begin{proposition}[Classification of real prime subsheaves]\label{prop:realprimesheaf}
The correspondence $(x,W)\mapsto\mathcal P_{x,W}$ restricts to a bijection between pairs with $\kappa(x)$ real and $W\subsetneq\F(x)$ a $\kappa(x)$-subspace, and the real prime subsheaves of $\F$. Thus reality restricts the point, with no additional condition on $W$.
\end{proposition}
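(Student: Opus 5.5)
Since Proposition~\ref{prop:primeclass}(b) already gives a bijection between pairs $(x,W)$ with $W\subsetneq\F(x)$ and prime quasi-coherent subsheaves, it suffices to show the following. For a prime subsheaf $\mathcal P=\mathcal P_{x,W}$, $\mathcal P$ is real if and only if $\kappa(x)$ is a real field. The plan is to reduce this to the module statement, Theorem~\ref{thm:modclass}(b), via the affine criterion Proposition~\ref{prop:sheafaffine}(b). That criterion says reality of $\mathcal P$ holds if and only if $\Gamma(U,\mathcal P)$ is a real submodule of $\Gamma(U,\F)$ for every affine open $U$. Equivalently, it suffices to check this on an affine open cover.

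Fix an affine open $U=\Spec A$, and put $M=\Gamma(U,\F)$. By Proposition~\ref{prop:primeclass}(a) and Lemma~\ref{lem:primesections}, two cases arise.
\begin{enumerate}
\item[(i)] If $x\notin U$, then $\Gamma(U,\mathcal P)=M$. The quotient is zero, which is trivially real: there are no nonzero elements, so the condition $\sum_i q_i\otimes q_i=0\Rightarrow q_i=0$ is vacuous.
\item[(ii)] If $x\in U$ corresponds to $\p$, then $\Gamma(U,\mathcal P)=P_{\p,W}$ with $W\subsetneq M(\p)=\F(x)$. Here we use the dictionary $\F(x)=M\otimes_A\kappa(\p)$. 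By Theorem~\ref{thm:modclass}(a),(b), this is a prime submodule of $M$ with annihilator $\p$. It is real exactly when $\kappa(\p)=\kappa(x)$ is real.
\end{enumerate}

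For the forward direction, suppose $\mathcal P$ is real. Choose any affine $U$ containing $x$; then case (ii) forces $\kappa(x)$ to be real. For the converse, suppose $\kappa(x)$ is real. Every affine open falls into case (i) or case (ii), and in both the section submodule is real. Proposition~\ref{prop:sheafaffine}(b) then gives reality of $\mathcal P$.

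The only point needing care is identifying the section module in case (ii) with $P_{\p,W}$ for the same subspace $W$ and the same residue field. This is where the fibre of the module must be matched with the fibre of the sheaf, $M\otimes_A\kappa(\p)=\F_x/\m_x\F_x$, as recorded in the affine dictionary. Given Proposition~\ref{prop:primeclass}(a), this matching is routine, so I expect no real obstacle. The statement that $W$ is unconstrained follows because the module criterion depends only on $\kappa(\p)$.
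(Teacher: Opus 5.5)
Your proposal is correct and follows essentially the paper's own argument: you reduce to the prime classification, then check reality chart by chart via Proposition~\ref{prop:sheafaffine}(b), using zero quotients on affines missing $x$ and the module criterion of Theorem~\ref{thm:modclass}(b) on affines containing $x$. The only cosmetic difference is that you identify the affine sections as $P_{\p,W}$ directly from Proposition~\ref{prop:primeclass}(a) and Theorem~\ref{thm:modclass}(a), whereas the paper cites Lemma~\ref{lem:primesections} for primeness of these sections with annihilator $\p_x$.
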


\begin{proof}
By Proposition~\ref{prop:primeclass}, the quotient $\F/\mathcal P_{x,W}$ is prime with attached point $x$. On an affine $U=\Spec A$ containing $x$, its sections are prime with annihilator $\p_x$ (Lemma~\ref{lem:primesections}); the module classification in Theorem~\ref{thm:modclass}(b) says they are real if and only if $\kappa(x)=\Frac(A/\p_x)$ is real. On affine opens not containing $x$, the quotient is zero. Proposition~\ref{prop:sheafaffine}(b) proves the equivalence, and the classification gives the last assertion.
\end{proof}

The point kernels at closed points give skyscraper quotients.

\begin{lemma}[Skyscraper quotients]\label{lem:maxsheaf}
Let $x\in X$ be closed and $W\subseteq\F(x)$ a subspace.
\begin{enumerate}
\item[(a)] The stalks of $\mathcal P_{x,W}$ are $\F_y$ for $y\neq x$, and the inverse image of $W$ in $\F_x$ for $y=x$. Moreover,
\[
\F/\mathcal P_{x,W}\cong(j_x)_*(\F(x)/W).
\]
If $\F$ is coherent, so is $\mathcal P_{x,W}$.
\item[(b)] $\mathcal P_{x,W}$ is maximal if and only if $W$ is a hyperplane.
\end{enumerate}
\end{lemma}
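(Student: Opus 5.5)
For (a), I would compute stalks directly from the section description in Proposition~\ref{prop:primeclass}(a), or equivalently from the defining exact sequence $0\to\mathcal P_{x,W}\to\F\to(j_x)_*(\F(x)/W)$. Since $x$ is closed, the point sheaf $(j_x)_*V$ is the skyscraper sheaf at $x$. Its stalk at $y$ is $V$ when $y\in\overline{\{x\}}=\{x\}$ and zero otherwise, because every $y\neq x$ has a neighbourhood $X\setminus\{x\}$ avoiding $x$. Taking stalks is exact, so $(\mathcal P_{x,W})_y=\F_y$ for $y\neq x$. At $y=x$ the stalk is the kernel of $\F_x\to\F(x)/W$, namely the inverse image of $W$.

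For the isomorphism $\F/\mathcal P_{x,W}\cong(j_x)_*(\F(x)/W)$, it suffices to show that $\F\to(j_x)_*V$ is surjective. I would check this on stalks. Away from $x$ both stalks vanish, and at $x$ the map $\F_x\to\F(x)\to V$ is surjective.

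Coherence of $\mathcal P_{x,W}$ for coherent $\F$ follows from the remark in Section~\ref{sec:basic} that quasi-coherent subsheaves of coherent sheaves are coherent. Quasi-coherence itself is already given by Proposition~\ref{prop:primeclass}(a).

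For (b), quasi-coherent subsheaves between $\mathcal P_{x,W}$ and $\F$ correspond to quasi-coherent subsheaves of the quotient $(j_x)_*V$, where $V=\F(x)/W$. I claim the latter correspond exactly to $\kappa(x)$-subspaces of $V$. Since the sheaf is supported at the single closed point $x$, a subsheaf $\mathcal L$ is determined by its stalk $\mathcal L_x\subseteq V$, which is an $\OO_{X,x}$-submodule. Because $\m_x$ kills $V$, this is the same as a $\kappa(x)$-subspace. Conversely, every subspace $V'\subseteq V$ gives the quasi-coherent subsheaf $(j_x)_*V'$. Equivalently, intermediate subsheaves are the kernels $\mathcal P_{x,W'}$ with $W\subseteq W'\subseteq\F(x)$, as one sees by comparing stalks. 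Hence $\mathcal P_{x,W}$ is maximal, i.e.\ proper with no proper intermediate subsheaf, exactly when $V$ is one-dimensional, that is, when $W$ is a hyperplane.

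The main obstacle is the claim that every quasi-coherent subsheaf of the skyscraper is itself a skyscraper on a subspace. I would argue this on an affine neighbourhood $U=\Spec A$ of $x$, where $x$ corresponds to a maximal ideal $\m$. There the skyscraper is $\widetilde V$ with $V$ an $A/\m$-module. Any subsheaf is $\widetilde{V'}$ for some $A$-submodule $V'$, which is automatically an $A/\m$-subspace. Away from $x$ all sections vanish, so the subsheaf is determined by $V'$. Gluing then follows because inclusion and equality can be checked on stalks.
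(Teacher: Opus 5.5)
Your proposal is correct and follows the paper's proof. You check surjectivity of $\F\to(j_x)_*(\F(x)/W)$ on stalks, get coherence from quasi-coherent subsheaves of coherent sheaves being coherent, and reduce maximality to the correspondence between quasi-coherent subsheaves of the skyscraper and $\kappa(x)$-subspaces of $\F(x)/W$. The affine justification in your last paragraph is also valid, though your earlier stalk argument already establishes that correspondence.
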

\begin{proof}
(a) Since $x$ is closed, the map $\F\to(j_x)_*(\F(x)/W)$ is surjective: its stalk map at $x$ is the fibre quotient, and all other target stalks vanish. This gives the quotient and stalk descriptions. Coherence follows from quasi-coherence of the kernel when $\F$ is coherent and $X$ is Noetherian.

(b) Quasi-coherent subsheaves of this skyscraper correspond to $\kappa(x)$-subspaces of $\F(x)/W$. Hence the quotient is nonzero and simple exactly when $W$ is a hyperplane.
\end{proof}

\begin{proposition}[Classification of maximal subsheaves]\label{prop:maxsheaf}
Let $\F$ be quasi-coherent. The correspondence of Proposition~\ref{prop:primeclass} restricts to a bijection between pairs $(x,H)$ with $x$ closed and $H\subsetneq\F(x)$ a hyperplane, and the maximal subsheaves of $\F$. A maximal subsheaf $\mathcal P_{x,H}$ is real if and only if $\kappa(x)$ is real.
\end{proposition}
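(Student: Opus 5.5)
The proof has two directions and a reality clause. Lemma~\ref{lem:maxsheaf}(b) already gives one direction: for a closed point $x$ and a hyperplane $H\subsetneq\F(x)$, the subsheaf $\mathcal P_{x,H}$ is maximal, and it is prime by Proposition~\ref{prop:primeclass}(b). Injectivity of $(x,H)\mapsto\mathcal P_{x,H}$ follows from the bijectivity in Proposition~\ref{prop:primeclass}(b). What remains is to show that every maximal quasi-coherent subsheaf $\mathcal P\subsetneq\F$ has the form $\mathcal P_{x,H}$ with $x$ closed and $H$ a hyperplane.

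\textbf{Step 1: a maximal subsheaf is prime.} Let $\mathcal Q=\F/\mathcal P$. Maximality says $\mathcal Q\neq0$ and that $\mathcal Q$ has no quasi-coherent subsheaves other than $0$ and itself. For a nonzero $\mathcal L\subseteq\mathcal Q$ we therefore have $\mathcal L=\mathcal Q$, so $\mathcal I(\mathcal L)=\mathcal I(\mathcal Q)$. By \eqref{eq:sheafcrit}, $\mathcal P$ is prime. Proposition~\ref{prop:primeclass}(b) then writes $\mathcal P=\mathcal P_{x,W}$ with $W=\mathcal P(x)\subsetneq\F(x)$, where $x$ is the point attached by Lemma~\ref{lem:primesections}.

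\textbf{Step 2: $x$ is closed.} This is the main step. Suppose $x$ is not closed. Its closure $Z$ is a Noetherian space with more than one point, so it contains a closed point $y\neq x$; take $y$ closed in $X$, which is possible because closed points of a Noetherian scheme lie in every nonempty closed subset. Choose an affine $U=\Spec A$ containing $y$. Then $x\in U$ as well, since $y\in\overline{\{x\}}$. Let $\p\subsetneq\m$ be the primes of $x$ and $y$, and put $M=\Gamma(U,\mathcal Q)$. By Lemma~\ref{lem:primesections}, $M$ is a nonzero prime $A$-module with annihilator $\p$, hence torsion-free over $A/\p$.

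Take $a\in\m\setminus\p$ and consider the subsheaf $a\mathcal Q|_U$. It is nonzero, since $M$ is torsion-free. It is also proper: $x$ lies in $V(\p)$, and Nakayama at the local ring at $\m$ gives $(M/aM)_\m\neq0$ whenever $M_\m\neq0$. Because $M$ need not be finitely generated, I would argue with a coherent piece instead. Let $0\neq q\in M$ and $C=Aq\cong A/\p$. Extend $\widetilde C$ to a coherent $\mathcal E\subseteq\mathcal Q$ using the extension fact in Section~\ref{sec:basic}. Maximality forces $\mathcal E=\mathcal Q$. Thus $\mathcal Q$ is coherent and $M$ is finitely generated, torsion-free over $A/\p$, with support $V(\p)\ni\m$.

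Now Nakayama gives $aM\neq M$. Extend $\widetilde{aM}$ to a coherent subsheaf of $\mathcal Q$ using Tag~01PF. The result is nonzero, and it is proper because its sections on $U$ equal $aM\neq M$. This contradicts simplicity of $\mathcal Q$, so $x$ is closed.

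\textbf{Step 3: $H$ is a hyperplane, and the reality clause.} Once $x$ is closed, Lemma~\ref{lem:maxsheaf}(b) shows that $\mathcal P_{x,W}$ is maximal exactly when $W$ is a hyperplane. For the last assertion, $\mathcal P_{x,H}$ is prime, so Proposition~\ref{prop:realprimesheaf} says it is real if and only if $\kappa(x)$ is real.
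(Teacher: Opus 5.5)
Your proof is correct, but it takes a different route from the paper's. The paper never uses the prime classification for existence; it uses it only for uniqueness. It argues as follows:
\begin{enumerate}
\item $\mathcal Q=\F/\mathcal M$ is coherent, since a simple sheaf equals any of its nonzero coherent subsheaves \cite[Tag~0GN6]{Stacks}.
\item It picks any closed point $x\in\operatorname{Supp}\mathcal Q$, and Nakayama gives $\mathcal Q(x)\neq0$.
\item A nonzero linear form on $\mathcal Q(x)$ gives a map $\mathcal Q\to(j_x)_*\kappa(x)$, which is surjective because $x$ is closed.
\item Simplicity makes this map an isomorphism, and by adjunction its kernel on $\F$ is $\mathcal P_{x,H}$.
\end{enumerate}

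You instead observe that a simple quotient is prime, so $\mathcal M=\mathcal P_{x,W}$ with $x$ the attached point of Lemma~\ref{lem:primesections}. You then show by contradiction that this attached point is closed. On an affine chart around a closed specialization $y\neq x$, extending $\widetilde{Aq}$ forces $M=Aq\cong A/\p$. Extending $\widetilde{aM}$ for $a\in\m\setminus\p$ then produces a nonzero proper quasi-coherent subsheaf. After that, Lemma~\ref{lem:maxsheaf}(b) forces $W$ to be a hyperplane.

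The paper's argument is shorter and constructive: the closed point is chosen rather than derived, and no contradiction is needed. Yours is longer and leans on the heavier Lemma~\ref{lem:primesections} and Proposition~\ref{prop:primeclass}. In exchange, it mirrors the module case (maximal $\Rightarrow$ prime $\Rightarrow$ classify, as in Lemma~\ref{lem:simple} and Theorem~\ref{thm:modclass}(c)). It also explains directly why a prime quotient attached to a non-closed point can never be simple. Both proofs obtain coherence of the simple quotient in essentially the same way, and both get the reality clause from Proposition~\ref{prop:realprimesheaf}.

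Two small cleanups. The abandoned first attempt with $a\mathcal Q|_U$ in Step~2 can be deleted. The closed-point fact should read: every nonempty closed subset of a quasi-compact scheme contains a point closed in $X$.
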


\begin{proof}
Let $\mathcal M$ be maximal, so $\mathcal Q=\F/\mathcal M$ is nonzero and simple. Every quasi-coherent sheaf on a Noetherian scheme is a union of coherent subsheaves \cite[Tag~0GN6]{Stacks}; simplicity therefore makes $\mathcal Q$ coherent. Choose a closed point $x$ in its support. Nakayama's lemma gives $\mathcal Q(x)\neq0$, and a nonzero linear form on this fibre gives a surjection $\mathcal Q\to(j_x)_*\kappa(x)$ with quasi-coherent kernel. Simplicity makes this map an isomorphism. The composite $\F(x)\to\mathcal Q(x)\to\kappa(x)$ has a hyperplane kernel $H$, and $\mathcal M=\mathcal P_{x,H}$. Conversely, every such kernel is maximal by Lemma~\ref{lem:maxsheaf}(b). Uniqueness follows from the prime classification, and Proposition~\ref{prop:realprimesheaf} gives the reality criterion.
\end{proof}

\begin{proof}[Proof of Theorem~\ref{thmD}(b)]
Proposition~\ref{prop:primeclass} proves the prime classification, including quasi-coherence and uniqueness of the pair. Proposition~\ref{prop:realprimesheaf} restricts the point to a real residue field. Proposition~\ref{prop:maxsheaf} restricts the pair to a closed point and a hyperplane.
\end{proof}

For an integral $X$ with generic point $\eta$, $0=\mathcal P_{\eta,0}$ is prime in $\OO_X$. At a closed point $x$, $\OO_X/\mathcal P_{x,0}\cong(j_x)_*\kappa(x)$ is a simple skyscraper. More generally, at a closed point $x$, the dimension of the skyscraper quotient $\F/\mathcal P_{x,W}$ is the codimension of $W$ in $\F(x)$; it is simple exactly when this codimension is one.

For coherent quotients, geometric and stalk criteria for primeness are given in Proposition~\ref{prop:primestalk} (Section~\ref{sec:loc-sheaves}).

\begin{remark}
For $\F=\OO_X$, Definition~\ref{def:primesheaf} recovers Kanda's prime quasi-coherent ideal subsheaves \cite[Definition~9.4, Lemma~9.6 and Proposition~9.8(4)]{Kanda22}. Smith proves that the scheme-theoretic support of a coherent prime object is integral for Noetherian schemes with an ample line bundle \cite[Proposition~4.4]{Smith}. In \cite{Smith,Pap}, a ``prime submodule'' is a subobject which is itself prime, whereas here the quotient is prime; for example, $6\Z\subseteq\Z$ is prime in the former sense but not in the latter.
\end{remark}

\emph{Families of fibre conditions.} We will need the following extension of the point-kernel construction.

\begin{lemma}[Intersections of point kernels]\label{lem:kernelint}
Let $S\subseteq X$, and for each $x\in S$ let $W_x\subseteq\F(x)$ be a subspace. Then $\mathcal K:=\bigcap_{x\in S}\mathcal P_{x,W_x}$ is quasi-coherent, and on every affine open $U=\Spec A$, with $M=\Gamma(U,\F)$,
\[
\Gamma(U,\mathcal K)=\bigcap_{\p\in S\cap U}P_{\p,W_\p}.
\]
The intersection over an empty indexing set is understood to be $M$.
\end{lemma}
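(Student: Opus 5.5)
The plan is to compute sections of $\mathcal K$ directly and then read off quasi-coherence from the module description on basic opens. Intersections of subsheaves are computed sectionwise, so for any open $U$ we have $\Gamma(U,\mathcal K)=\bigcap_{x\in S}\Gamma(U,\mathcal P_{x,W_x})$. By Proposition~\ref{prop:primeclass}(a), $\Gamma(U,\mathcal P_{x,W_x})$ is all of $\Gamma(U,\F)$ when $x\notin U$. When $x\in U$, it consists of the sections $s$ with $s(x)\in W_x$. On an affine $U=\Spec A$ with $M=\Gamma(U,\F)$ and $x\leftrightarrow\p$, the affine dictionary identifies $s(x)$ with $m(\p)=m\otimes1\in M(\p)=\F(x)$. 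Hence this condition says $m\in P_{\p,W_\p}$. Taking the intersection gives the displayed formula, with the convention $M$ when $S\cap U=\emptyset$.

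It remains to show that $\mathcal K$ is quasi-coherent. I would verify it on an affine chart $U=\Spec A$. Put $K:=\bigcap_{\p\in S\cap U}P_{\p,W_\p}\subseteq M$; the goal is $\mathcal K|_U=\widetilde K$. Both are subsheaves of $\widetilde M$, so it suffices to compare their sections on basic opens $D(f)$. By the formula already proved, applied to the affine $D(f)=\Spec A_f$, we have $\Gamma(D(f),\mathcal K)=\bigcap_{\p\in S\cap D(f)}P^{(f)}_{\p,W_\p}\subseteq M_f$. Here the fibre of $M_f$ at $\p A_f$ is canonically $M(\p)$, and the fibre map $M_f\to M(\p)$ sends $m/f^k$ to $\bar f^{-k}m(\p)$. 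The claim to prove is therefore $K_f=\bigcap_{\p\in S\cap D(f)}P^{(f)}_{\p,W_\p}$.

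The main obstacle is this identity, since localization need not commute with infinite intersections. I would settle it by direct inspection rather than by general exactness. First suppose $m/f^k$ lies in the right-hand side. Since $\bar f$ is invertible in $\kappa(\p)$ for $\p\in D(f)$, this means $m(\p)\in W_\p$ for all $\p\in S\cap D(f)$. For $\p\in S\cap U$ with $f\in\p$, we have $(fm)(\p)=\bar f\,m(\p)=0\in W_\p$. So $fm\in K$, and $m/f^k=fm/f^{k+1}\in K_f$. Conversely, if $m\in K$, then $m(\p)\in W_\p$ for every $\p\in S\cap D(f)$, and scaling by $\bar f^{-k}$ preserves the subspace $W_\p$. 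Thus $K_f$ lies in the right-hand side, which proves the identity.

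Consequently $\mathcal K|_U$ agrees with $\widetilde K$ on a basis of opens, compatibly with restriction, so $\mathcal K|_U=\widetilde K$. Applying this on an affine cover gives quasi-coherence of $\mathcal K$.
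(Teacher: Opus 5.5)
Your proposal is correct and follows essentially the same route as the paper. You obtain the section formula from Proposition~\ref{prop:primeclass}(a), and then prove $\Gamma(D(f),\mathcal K)=\Gamma(U,\mathcal K)_f$ by rewriting $m/f^k$ as $fm/f^{k+1}$, which makes the fibre conditions at primes containing $f$ hold trivially.
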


\begin{proof}
The section formula follows from Proposition~\ref{prop:primeclass}(a). Let $f\in A$. Applied to $D(f)=\Spec A_f$, it says that $\mu\in M_f$ lies in $\Gamma(D(f),\mathcal K)$ if and only if $\mu(\p)\in W_\p$ for every $\p\in S$ with $f\notin\p$. If $\mu=m/f^r$ has this property, then $fm\in\Gamma(U,\mathcal K)$: for $\p\in S$ with $f\notin\p$, $(fm)(\p)$ is a nonzero multiple of $\mu(\p)$, and for $f\in\p$, $(fm)(\p)=0$. Hence $\mu=(fm)/f^{r+1}\in\Gamma(U,\mathcal K)_f$, and the reverse inclusion is clear. Thus $\Gamma(D(f),\mathcal K)=\Gamma(U,\mathcal K)_f$ for all $f$, so $\mathcal K|_U$ is the quasi-coherent sheaf associated to $\Gamma(U,\mathcal K)$.
\end{proof}

\section{Jacobson properties}\label{sec:jacobson}

The passage from arbitrary points to closed points rests on separation by maximal quotients. The same argument gives the ordinary and real Jacobson properties; the real case additionally needs a supply of real maximal ideals.

\subsection{Separation}

The properness of a fibre kernel is controlled by the annihilator when its quotient is finitely generated.

\begin{lemma}\label{lem:Kproper}
Let $N$ be a submodule of an $R$-module $M$ and $\p$ a prime ideal of $R$.
\begin{enumerate}
\item[(a)] If $\p\not\supseteq(N:M)$, then $K(N,\p)=M$.
\item[(b)] If $M/N$ is finitely generated, then $K(N,\p)\neq M$ if and only if $\p\supseteq(N:M)$.
\end{enumerate}
\end{lemma}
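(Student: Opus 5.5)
Both parts reduce to localization at $\p$: since $K(N,\p)$ is the inverse image of $N(\p)$ under the fibre map (Lemma~\ref{lem:fibreid}(c)), and the values $m(\p)$ span $M(\p)$, we have $K(N,\p)=M$ if and only if $N(\p)=M(\p)$. By Lemma~\ref{lem:fibreid}(b) this means $N_\p+\p M_\p=M_\p$, i.e.\ $(M/N)_\p=\p(M/N)_\p$.

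For (a), assume $\p\not\supseteq(N:M)$ and pick $s\in(N:M)\setminus\p$. For every $m\in M$ we have $sm\in N\subseteq N+\p M$ with $s\notin\p$. Hence $m\in K(N,\p)$ directly from the definition \eqref{eq:modulesaturation}, and no finiteness is needed.

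For (b), the direction ``$K(N,\p)\neq M\Rightarrow\p\supseteq(N:M)$'' is the contrapositive of (a). For the converse, assume $M/N$ is finitely generated and $\p\supseteq(N:M)=\Ann_R(M/N)$. Put $Q=M/N$.
\begin{enumerate}
\item[(i)] Since $Q$ is finitely generated, localization commutes with annihilators: $\Ann_{R_\p}(Q_\p)=\Ann_R(Q)_\p$. This ideal lies in $\p R_\p$, hence is proper, so $Q_\p\neq0$. Equivalently, if $Q_\p=0$, then each of the finitely many generators is killed by some element outside $\p$, and their product is an element of $(N:M)\setminus\p$, a contradiction.
\item[(ii)] $Q_\p$ is a nonzero finitely generated module over the local ring $R_\p$. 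By Nakayama's lemma, $Q_\p\neq\p Q_\p$.
\item[(iii)] By exactness of localization, $Q_\p/\p Q_\p\cong M_\p/(N_\p+\p M_\p)$. Step (ii) therefore gives $N(\p)\neq M(\p)$, so $K(N,\p)\neq M$.
\end{enumerate}

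The only step requiring care is (i), which is where finite generation of $M/N$ enters. The product-of-denominators argument there settles it directly, and I expect no real obstacle.
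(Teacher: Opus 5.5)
Your proof is correct. Part (a) and the easy direction of (b) match the paper. For the remaining direction you argue locally, while the paper argues globally.

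The paper assumes $K(N,\p)=M$ and multiplies the finitely many $s_i\notin\p$ with $s_iq_i\in\p Q$ into one $s\notin\p$ with $sQ\subseteq\p Q$. It then applies the determinant trick \cite[Proposition~2.4]{AM} to multiplication by $s$ on $Q$. This produces the explicit witness $c=s^n+a_1s^{n-1}+\dots+a_n\in(N:M)\setminus\p$.

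You instead use Lemma~\ref{lem:fibreid} to rewrite $K(N,\p)=M$ as $Q_\p=\p Q_\p$. You then combine $Q_\p\neq0$ whenever $\p\supseteq\Ann_R(Q)$ (for finitely generated $Q$) with Nakayama's lemma over the local ring $R_\p$.

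Nakayama's lemma is itself a corollary of the determinant trick, so both arguments rest on the same mechanism. The paper's version avoids localization and exhibits the annihilating element directly. Yours makes it transparent that the lemma says ``the fibre $Q(\p)$ is nonzero exactly when $\p$ lies in the support of $Q$''. That is the form in which the lemma is later used, for instance in Lemma~\ref{lem:modproper}.
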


\begin{proof}
(a) Any $s\in(N:M)\setminus\p$ satisfies $sm\in N$ for all $m\in M$.

(b) By (a), it remains to show that $K(N,\p)=M$ implies $\p\not\supseteq(N:M)$. Put $Q:=M/N$ and choose generators $q_1,\dots,q_n$ of $Q$. If $K(N,\p)=M$, there are $s_i\notin\p$ with $s_iq_i\in\p Q$; their product $s\notin\p$ satisfies $sQ\subseteq\p Q$. By the determinant trick \cite[Proposition~2.4]{AM}, applied to multiplication by $s$ on $Q$, there are $a_1,\dots,a_n\in\p$ such that $c:=s^n+a_1s^{n-1}+\dots+a_n$ annihilates $Q$. Then $c\in(N:M)$, and $c\notin\p$, since $c\equiv s^n\not\equiv0$ modulo $\p$.
\end{proof}

An $R$-module $Q$ is torsionless if every nonzero $q\in Q$ is detected by a homomorphism $Q\to R$, equivalently if $Q$ embeds into a product $R^I$ \cite[\S4H, Definition~4.64 and Remark~4.65(a)]{Lam}. Over a domain, torsionless modules are torsion-free \cite[Remark~4.65(d)]{Lam}. Finite generation gives the converse needed below.

\begin{lemma}[{\cite[Remark~4.65(e)]{Lam}}]\label{lem:embed}
Let $D$ be a domain and $Q$ a finitely generated torsion-free $D$-module. Then $Q$ is isomorphic to a submodule of $D^r$ for some $r\ge0$; in particular, $Q$ is torsionless.
\end{lemma}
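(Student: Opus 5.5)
The plan is to embed $Q$ into the finite-dimensional vector space $V:=Q\otimes_DK$, where $K=\Frac(D)$, and then clear denominators using finitely many generators. Let $q_1,\dots,q_n$ generate $Q$.

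\emph{Step 1: $Q$ embeds into $V$.} The natural map $Q\to V$, $q\mapsto q\otimes1$, is the localization map $Q\to(D\setminus\{0\})^{-1}Q$. Its kernel consists of those $q$ with $sq=0$ for some $s\neq0$. By torsion-freeness this kernel is zero, so $Q$ is identified with the $D$-submodule of $V$ generated by the vectors $q_i\otimes1$.

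\emph{Step 2: $Q$ embeds into $D^r$.} The space $V$ is spanned over $K$ by the $q_i\otimes1$, so $r:=\dim_KV\le n$. Choose a basis of $V$, which gives an isomorphism $V\cong K^r$. Each image of $q_i$ has coordinates in $K$, so we may pick a common denominator $0\neq d\in D$ with $d\,q_i\in D^r$ for all $i$. Multiplication by $d$ is an injective $K$-linear automorphism of $K^r$. It therefore maps the $D$-span $Q$ of the $q_i$ isomorphically onto a $D$-submodule of $D^r$. If $Q=0$, take $r=0$.

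\emph{Step 3: $Q$ is torsionless.} For $0\neq q\in Q$, some coordinate projection $D^r\to D$ is nonzero on the image of $q$. Composing that projection with the embedding of Step 2 gives a homomorphism $Q\to D$ that detects $q$, so $Q$ is torsionless.

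None of these steps is a serious obstacle. The only points needing care are the injectivity in Step 1, which is exactly where torsion-freeness is used, and the use of finite generation to find a single common denominator $d$ in Step 2.
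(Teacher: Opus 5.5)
Your proposal is correct and follows essentially the same route as the paper: embed $Q$ into $Q\otimes_D K\cong K^r$ using torsion-freeness, then multiply by a common denominator of the coordinates of finitely many generators to land in $D^r$. Your Step~3, deducing torsionlessness from the coordinate projections, just spells out the ``in particular'' that the paper leaves implicit.
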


\begin{proof}
Put $K=\Frac D$. Torsion-freeness embeds $Q$ into the finite-dimensional space $Q\otimes_D K\cong K^r$. Choose $0\neq a\in D$ clearing the denominators of the coordinates of a finite generating set of $Q$. Then multiplication by $a$ embeds $Q$ into $D^r$.
\end{proof}

\begin{theorem}\label{thm:jac}
Let $P$ be a prime submodule of an $R$-module $M$, $\p:=(P:M)$, and let $\M$ be a set of maximal ideals of $R$ containing $\p$ such that $\p=\bigcap_{\m\in\M}\m$. If $M/P$ is a torsionless $R/\p$-module (e.g.\ if $M/P$ is finitely generated as an $R$-module, by Lemmas~\ref{lem:primequot} and~\ref{lem:embed}), then
\[
P=\bigcap_{\m\in\M}(P+\m M).
\]
Consequently, $P$ is an intersection of maximal submodules $P'$ with $(P':M)\in\M$.
\end{theorem}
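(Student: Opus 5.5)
The inclusion $P\subseteq\bigcap_{\m\in\M}(P+\m M)$ is trivial, so I only need the reverse inclusion. I would argue by contraposition. Take $m\in M\setminus P$ and produce some $\m\in\M$ with $m\notin P+\m M$. Put $Q:=M/P$ and $D:=R/\p$. By Lemma~\ref{lem:primequot}, $\p$ is prime and $Q$ is a torsion-free $D$-module. By hypothesis $Q$ is torsionless over $D$. So for the nonzero image $\bar m\in Q$ there is a $D$-linear map $\varphi:Q\to D$ with $\varphi(\bar m)=\bar a\neq0$, where $a\in R\setminus\p$.

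Since $\p=\bigcap_{\m\in\M}\m$ and $a\notin\p$, there is $\m\in\M$ with $a\notin\m$. I claim $m\notin P+\m M$. Suppose instead $m=p+\sum_j r_jm_j$ with $p\in P$, $r_j\in\m$ and $m_j\in M$. Then $\bar m=\sum_j\bar r_j\bar m_j$ in $Q$. Applying $\varphi$ gives $\bar a=\sum_j\bar r_j\varphi(\bar m_j)\in\m/\p$, so $a\in\m$, which is a contradiction. Hence $P=\bigcap_{\m\in\M}(P+\m M)$. The key point is that $\varphi$ carries $\m Q$ into $\m D$, and $\m D=\m/\p$ is proper because $\m\supseteq\p$.

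For the consequence, fix $\m\in\M$ with $P+\m M\neq M$. Lemma~\ref{lem:fibreid}(d) identifies $P+\m M$ with $P_{\m,W}$, where $W=P(\m)\subsetneq M(\m)$. Write $W$ as the intersection of the hyperplanes of $M(\m)$ containing it, which is valid in any vector space. By Theorem~\ref{thm:modclass}(c), each $P_{\m,H}$ with $H$ a hyperplane is a maximal submodule with annihilator $\m$. Taking inverse images of this intersection shows that $P+\m M$ is an intersection of maximal submodules $P'$ with $(P':M)=\m\in\M$. The terms with $P+\m M=M$ may be dropped from the intersection.

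Since $P$ is proper, some term $P+\m M$ is proper, so the resulting family of maximal submodules is nonempty. The parenthetical case, where $M/P$ is finitely generated over $R$, follows from Lemma~\ref{lem:embed} applied to $Q$ over $D$, since $Q$ is then finitely generated and torsion-free over $D$. The only step needing care is choosing $\m$ so that it avoids the single scalar $a$ produced by the functional; torsionlessness is exactly what lets me reduce a module separation problem to separating $a$ from $\p$ by an ideal in $\M$.
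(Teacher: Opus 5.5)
Your proposal is correct and follows the paper's proof essentially step by step. As in the paper, a functional $\varphi:Q\to D$ detecting $\bar m$ yields a scalar $a\notin\p$, and choosing $\m\in\M$ with $a\notin\m$ separates $\bar m$ from $\m Q$ because $\varphi(\m Q)\subseteq\m/\p$. For the consequence, you phrase the splitting of each proper $P+\m M$ via hyperplanes in the fibre $M(\m)$ and Theorem~\ref{thm:modclass}(c), while the paper uses functionals on $M/(P+\m M)$ and Lemma~\ref{lem:simple}; this is the same argument.
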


\begin{proof}
Put $D:=R/\p$ and $Q:=M/P$. Let $x\in M\setminus P$ with image $q\neq0$ in $Q$. Choose $\varphi:Q\to D$ with $\varphi(q)\neq0$, and $c\in R$ with $\bar c=\varphi(q)$; then $c\notin\p$, so there is $\m\in\M$ with $c\notin\m$. Since $\varphi(\m Q)=\m\varphi(Q)\subseteq\m D=\m/\p$ and $\varphi(q)=\bar c\notin\m/\p$, we get $q\notin\m Q$, i.e.\ $x\notin P+\m M$. Hence $P=\bigcap_{\m\in\M}(P+\m M)$. For any $\m$, linear functionals on the $R/\m$-vector space $M/(P+\m M)$ separate its nonzero vectors. Their nonzero composites $M\to R/\m$ are surjective, with maximal kernels of annihilator $\m$ by Lemma~\ref{lem:simple}. Thus each proper $P+\m M$ is an intersection of such kernels, proving the consequence.
\end{proof}

\begin{remark}[Vanishing fibres]\label{rem:empty}
In Theorem~\ref{thm:jac}, $P+\m M=M$ can occur, equivalently $\m(M/P)=M/P$ or $K(P,\m)=M$ by Lemma~\ref{lem:fibreid}(d). If $M/P$ is finitely generated, this never happens, by Lemma~\ref{lem:Kproper}(b), since $\m\supseteq\p=(P:M)$. The same holds if $R$ is Noetherian. Indeed, let $D:=R/\p$, $\bar\m:=\m/\p$, and, using torsionlessness, identify $M/P$ with a $D$-submodule of $D^I$. If $\bar\m(M/P)=M/P$, then the image $J_i$ of $M/P$ under the $i$-th projection is an ideal of $D$ with $\bar\m J_i=J_i$; as $J_i$ is finitely generated, $(1+a)J_i=0$ for some $a\in\bar\m$ \cite[Corollary~2.5]{AM}, and $1+a\neq0$; since $D$ is a domain, $J_i=0$. As this holds for all $i$, $M/P=0$, a contradiction.

Without these hypotheses, $P+\m M=M$ can occur. Let $A:=\R[x^{1/2^n}:n\ge0]$, viewed as a ring of functions on $[0,\infty)$, and for $t\ge0$ let $\m_t$ be the kernel of the evaluation $x^{1/2^n}\mapsto t^{1/2^n}$. The $\m_t$ are real maximal ideals of $A$ with $\bigcap_{t\ge0}\m_t=0$, as a nonzero polynomial in $x^{1/2^n}$ cannot vanish on $[0,\infty)$. Apply Theorem~\ref{thm:jac} with $R:=A$, $\M:=\{\m_t:t\ge0\}$, $M:=\m_0$ and $P:=0$. Since $A$ is a domain and $\m_0\neq0$, the submodule $P$ is prime with $(P:M)=0$, and $M/P=\m_0$ is torsionless over $A/(P:M)=A$, being an $A$-submodule of $A$. But $\m_0=\m_0^2$. Indeed, every $f\in\m_0$ lies in $\R[y]$ for some $y=x^{1/2^N}$ and satisfies $f(0)=0$, so $f=yg$ with $g\in\R[y]$; with $z:=x^{1/2^{N+1}}$ we get $f=z\cdot(zg)\in\m_0^2$, as $z,zg\in\m_0$. Hence $P+\m_0M=M$. As all $\m_t$ are real, this also happens for real maximal ideals.
\end{remark}

\begin{example}\label{ex:Q}
(a) Over the Jacobson ring $\Z$, the module $\Q$ is torsion-free but not torsionless \cite[Remark~4.65(e)]{Lam}, so $0$ is a prime submodule; but $\Q$ has no maximal submodule, since $\Q/P\cong\Z/p$ would force $P\supseteq p\Q=\Q$ (cf.\ \cite[Examples~2.4(ii)]{MSS}).

(b) \emph{Freeness does not suffice.} Let $\pi:\Z^{(\N)}\to\Q$ be a surjection and $P:=\ker\pi$. Then $\Z^{(\N)}/P\cong\Q$ is a prime module, so $P$ is a prime submodule of a free module over a Jacobson ring, but by (a) no maximal submodule contains $P$. Thus freeness of $M$ cannot replace torsionlessness of $M/P$ in Theorem~\ref{thmA}(a). (Conversely, if $R$ is not a Jacobson ring, then no nonzero free module is Jacobson \cite[Examples~2.4(i)]{MSS}.)

(c) \emph{A torsionless module which is not finitely generated.} The $\Z$-module $\Z^\N$ is torsion-free and torsionless (by \cite[Remark~4.65(c)]{Lam}), so $0$ is a prime submodule and an intersection of maximal submodules by Theorem~\ref{thm:jac}. It is not a submodule of a free module, since subgroups of free abelian groups are free and $\Z^\N$ is not free by the Baer--Specker theorem \cite[Chapter~3, Theorem~8.2]{Fuchs}; so the embedding argument of \cite[Lemma~2.8]{MSS} does not apply to it.

(d) \emph{Torsionlessness is not necessary.} Let $p$ be a prime and $Q:=\Z[1/p]$. Then $0$ is a prime submodule and $Q/qQ\cong\Z/q$ for all primes $q\neq p$, while $pQ=Q$; so $0=\bigcap_{q\neq p}qQ$ is an intersection of maximal submodules. However, $\Hom_\Z(Q,\Z)=0$ (the image of $1$ under a homomorphism $\varphi$ is divisible by all powers of $p$, so $\varphi(1)=0$, and then $p^k\varphi(p^{-k})=0$, so $\varphi=0$, as $\Z$ is torsion-free), so $Q$ is not torsionless.

(e) \emph{A real torsion-free module with vanishing closed fibres.} Torsionlessness cannot be weakened to torsion-freeness in Theorem~\ref{thmA}(b), and finite generation cannot be dropped in Theorem~\ref{thmB}(b). Let $R=\R[x]$ and $M=\R(x)$. Then $M$ is a real module: multiplication $M\times M\to\R(x)$, $(v,w)\mapsto vw$, is $R$-bilinear, so it induces an $R$-linear map $\mu:M\otimes_RM\to\R(x)$ with $\mu(\sum_im_i\otimes m_i)=\sum_im_i^2$. If $\sum_im_i\otimes m_i=0$, then $\sum_im_i^2=0$ in the real field $\R(x)$, so all $m_i=0$. Thus $\rrad0=0$, and $M$ is semireal, as $M\neq0$. But $M$ has no maximal submodules at all, since $\m M=M$ for every maximal ideal $\m$ of $R$; in particular $M(a)=0$ for every $a\in\R$. Thus both the closed-fibre radical formula and the semireality test following Theorem~\ref{thm:modfibre} fail without finite generation. As in (b), the kernel of a surjection from a free $R$-module onto $\R(x)$ is a real prime submodule of a free module which is contained in no maximal submodule.
\end{example}

\subsection{Ordinary and real Jacobson properties for modules}

Recall that quotients of Jacobson rings are Jacobson and that finitely generated algebras over a field or $\Z$ are Jacobson \cite[Chapter~4]{Eis}. Quotients of real Jacobson rings are real Jacobson by the correspondence of real ideals in Section~\ref{sec:basic}. In the finite type case, the required real maximal ideals are supplied by Lang's theorem; importantly, the base field need not be real closed.

\begin{proposition}\label{prop:fgreal}
Let $A$ be a finitely generated algebra over a real field $k$. Then every real prime ideal of $A$ is an intersection of real maximal ideals.
\end{proposition}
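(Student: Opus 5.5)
Reduce to a domain, then separate each nonzero element from zero by a real maximal ideal, using Lang's homomorphism theorem over the real closure of $k$.

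\emph{Reduction.} Let $\p\subseteq A$ be a real prime and let $f\in A\setminus\p$. It suffices to find a real maximal ideal $\m\supseteq\p$ with $f\notin\m$. By the correspondence of real ideals under quotients in Section~\ref{sec:basic}, we may replace $A$ by $B:=A/\p$. Then $B$ is a finitely generated $k$-algebra and a domain whose fraction field $K=\kappa(\p)$ is real, by Lemma~\ref{lem:primequot} applied to $M=R$. The goal is a real maximal ideal of $B$ not containing $\bar f\neq0$.

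\emph{Localization.} Next pass to $B_f=B[1/f]$. This is still a finitely generated $k$-algebra and a domain with the same real fraction field $K$. Its maximal ideals correspond to maximal ideals of $B$ not containing $f$. This uses that $B$ is Jacobson: a maximal ideal of $B_f$ contracts to a prime of $B$ whose residue ring is a field finite over $k$ by Zariski's lemma, hence it is maximal. So it suffices to show that $B_f$ has a maximal ideal $\m$ with $B_f/\m$ real.

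\emph{Lang's theorem.} Order $K$ and take its real closure $K'$; the restriction gives an ordering of $k$, whose real closure $\bar k$ embeds over $k$ into $K'$. Then $B_f\otimes_k\bar k\to K'$ is a $\bar k$-algebra map from a finitely generated $\bar k$-algebra into a real closed extension. By the Artin--Lang homomorphism theorem, there is a $\bar k$-algebra homomorphism $B_f\otimes_k\bar k\to\bar k$. Restricting to $B_f$ gives a $k$-algebra map $\varphi:B_f\to\bar k$. Its image is a $k$-subalgebra of $\bar k$, finite-dimensional because it is finitely generated by algebraic elements, and a domain, hence a field. So $\m=\ker\varphi$ is maximal, and $B_f/\m$ embeds into the real field $\bar k$, so it is real. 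Since $f$ is a unit in $B_f$, $\varphi(f)\neq0$. Pulling $\m$ back through $A\to B\to B_f$ gives the required real maximal ideal of $A$ containing $\p$ and avoiding $f$.

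The main obstacle is the Lang step: justifying the real closures and the embedding $\bar k\hookrightarrow K'$ compatible with the chosen ordering, and invoking Artin--Lang correctly. Once that is in place, the other steps are routine.
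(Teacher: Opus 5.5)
Your proof is correct and follows essentially the same route as the paper: pass to $D=A/\p$, invert the element to be avoided, and apply Lang's homomorphism theorem to get a $k$-map into the real closure of $k$ for the ordering induced from $\operatorname{Frac}D$; the image is then a real field. The differences are cosmetic: the paper cites Lang's theorem directly over the real field $k$ and uses that $\varphi(D)$ is already a field, so it needs no contraction argument. You instead reduce to the real closed case by base change to $\bar k$ and then contract the maximal ideal from $B_f$ using the Jacobson property.
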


\begin{proof}
Let $\p$ be a real prime ideal and $g\notin\p$. Set $D:=A/\p$ and order $\operatorname{Frac}D$. Lang's theorem \cite[Chapter~XI, \S3, Theorem~3.1, p.~457]{Lang}, applied to the finitely generated $k$-algebra $D[1/\bar g]$, gives a $k$-homomorphism
\[
\varphi:D[1/\bar g]\longrightarrow k^{\mathrm{rc}},
\]
where $k^{\mathrm{rc}}$ is a real closure for the ordering induced on $k$; see also \cite[p.~111]{Dubois} and \cite[II.2, (2.4), p.~52]{FG}. The image $\varphi(D)$ is a finitely generated algebraic $k$-algebra and a domain, hence a field; it is real, being contained in $k^{\mathrm{rc}}$. Thus the kernel of $A\to D\to\varphi(D)$ is a real maximal ideal containing $\p$ but not $g$, since $\varphi(\bar g)$ is invertible. Intersecting these ideals gives $\p$.
\end{proof}

\begin{remark}\label{rem:fgreal}
Let $A$ be a finitely generated algebra over a field $k$.

(a) If $k$ is not real, then $A$ has no real prime ideals: if $-1=\sum_ic_i^2$ in $k$, then $1^2+\sum_ic_i^2=0$ lies in every ideal, so a real ideal contains $1$. Together with Proposition~\ref{prop:fgreal}, every finitely generated algebra over a field is real Jacobson.

(b) If $k$ is real closed, the real maximal ideals of $A$ are the $\m_a$ with $a\in V_A(k)$: by Zariski's lemma \cite[Proposition~7.9]{AM} each residue field $A/\m$ is a finite extension of $k$, and a real closed field has no proper real algebraic extension \cite[I.1, Theorem~1.29]{FG}; see also \cite[II.3, Theorem~3.4]{FG}.
\end{remark}

By the decomposition theorems \cite[Theorems~A and~B]{Cdec}, the ordinary, resp.\ real, Jacobson property is equivalent to requiring every semiprime, resp.\ real, submodule to be an intersection of maximal, resp.\ real maximal, submodules.

\begin{proof}[Proof of Theorem~\textup{\ref{thmA}(a),(b)}]
Put $\p=(P:M)$ and take all maximal ideals containing $\p$ in (a), or all real maximal ideals containing $\p$ in (b). The ring hypothesis says their intersection is $\p$; in (b), $\p$ is real by Lemma~\ref{lem:primequot}. Theorem~\ref{thm:jac} expresses $P$ as an intersection of maximal submodules with annihilators in this family, which are real in (b) by Theorem~\ref{thm:modclass}(c). For finitely generated prime quotients, Lemmas~\ref{lem:primequot} and~\ref{lem:embed} supply torsionlessness. Hence the corresponding ring property implies that every finitely generated module has the corresponding Jacobson property; the converse follows by taking $M=R$.
\end{proof}

The real Jacobson hypothesis in (b) cannot be replaced by the ordinary Jacobson property: $\Z$ is Jacobson and $0$ is a real prime ideal, but no maximal ideal $p\Z$ is real, since $-1$ is a sum of squares in $\mathbb F_p$; so $0\subseteq\Z$ is a real prime submodule that is not an intersection of real maximal submodules.

A module which is not semireal has no real prime submodules, so its real Jacobson property is vacuous. We do not require real Jacobson rings or modules to be semireal; in particular, this keeps real Jacobson rings closed under quotients. For example, $\R[x]/(x^2+1)$ is not semireal, although $\R[x]$ is real Jacobson.

\begin{corollary}\label{cor:maxreal}
Let $R$ be a real Jacobson ring and $M$ a finitely generated $R$-module. Then every submodule of $M$ which is maximal among the proper real submodules is a real maximal submodule.
\end{corollary}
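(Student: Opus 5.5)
Let $N\subseteq M$ be maximal among the proper real submodules. I will show that $N$ is a maximal submodule. The plan has two steps: first show that $N$ is real prime, then apply Theorem~\ref{thmA}(b).

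\emph{Step 1: $N$ is prime.} By \cite[Theorem~B]{Cdec}, the real submodule $N$ is an intersection of real prime submodules. Since $N\neq M$, this family is nonempty. Each member $P$ is proper, real and contains $N$, so maximality of $N$ among proper real submodules forces $P=N$. Hence $N$ is itself a real prime submodule.

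\emph{Step 2: $N$ is an intersection of real maximal submodules.} The quotient $M/N$ is finitely generated because $M$ is. So Theorem~\ref{thmA}(b), via Theorem~\ref{thm:jac}, which uses Lemmas~\ref{lem:primequot} and~\ref{lem:embed} for torsionlessness, expresses $N$ as an intersection of real maximal submodules $P'$ with $(P':M)$ a real maximal ideal containing $(N:M)$.

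To rule out an empty intersection, I use Lemma~\ref{lem:Kproper}(b) and Remark~\ref{rem:empty}. Since $M/N$ is finitely generated, every such $P'$ of the form $N+\m M$ is proper. Moreover, the family of real maximal ideals containing $(N:M)$ is nonempty. Indeed, $(N:M)$ is a real prime ideal, and the real Jacobson property writes it as an intersection of real maximal ideals. If that family were empty, the intersection would be $R$, contradicting properness of $(N:M)$.

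\emph{Conclusion.} So there is at least one real maximal submodule $P'\supseteq N$. This $P'$ is proper and real, so maximality of $N$ gives $N=P'$. Therefore $N$ is real maximal.

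The main point needing care is this nonemptiness: the empty intersection convention must not give $M$. The argument above handles it through finite generation, which is exactly the hypothesis that Example~\ref{ex:Q}(e) shows cannot be dropped.
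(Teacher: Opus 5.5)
Your proof is correct and follows the paper's argument: Theorem~B of \cite{Cdec} together with Theorem~\ref{thmA}(b) writes $N$ as an intersection of real maximal submodules, and maximality forces each of them to equal $N$; the only difference is that you first use maximality to make $N$ itself real prime. Your separate nonemptiness argument is harmless but unnecessary, since $N\neq M$ already forces any family whose intersection is $N$ to be nonempty.
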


\begin{proof}
If $N$ is maximal among the proper real submodules, then $N$ is an intersection of a nonempty family of real maximal submodules by \cite[Theorem~B]{Cdec} and Theorem~\ref{thmA}(b), and each of them equals $N$ by maximality. This refines the statement of \cite[Theorem~B]{Cdec} that such $N$ are prime submodules.
\end{proof}

\begin{proposition}[Annihilator criteria]\label{prop:ann}
Let $M$ be a finitely generated $R$-module with annihilator $I$.
\begin{enumerate}
\item[(a)] $M$ is Jacobson if and only if $R/I$ is a Jacobson ring.
\item[(b)] $M$ is real Jacobson if and only if $R/I$ is a real Jacobson ring.
\end{enumerate}
\end{proposition}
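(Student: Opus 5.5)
The plan is to reduce both parts to Theorem~\ref{thmA}(a),(b) applied over the ring $\bar R:=R/I$. Since $IM=0$, $M$ is a faithful finitely generated $\bar R$-module. Its $R$-submodules and $\bar R$-submodules are the same, so primeness, maximality and reality of submodules agree over $R$ and over $\bar R$. For reality, note that $M/N\otimes_RM/N=M/N\otimes_{\bar R}M/N$, because $I$ acts trivially on both factors. Hence $M$ is (real) Jacobson over $R$ if and only if it is so over $\bar R$. The colon ideals also correspond: $(P:_RM)=\pi^{-1}(P:_{\bar R}M)$, where $\pi:R\to\bar R$ is the quotient map.

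For the ``if'' directions, assume $\bar R$ is Jacobson, resp.\ real Jacobson. Theorem~\ref{thmA}(a), resp.~(b), applied to the finitely generated $\bar R$-module $M$, shows that $M$ is Jacobson, resp.\ real Jacobson, over $\bar R$, and hence over $R$.

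For the ``only if'' directions, assume $M$ is Jacobson. Let $\bar\p$ be a prime ideal of $\bar R$ and $\p=\pi^{-1}(\bar\p)\supseteq I$. We want $\p$ to be an intersection of maximal ideals containing $I$.
\begin{enumerate}
\item[(1)] Since $M$ is finitely generated and faithful over $\bar R$, we have $\operatorname{Supp}M=V(I)$. Localizing, $M_\p\neq0$, so Nakayama gives $M(\p)\neq0$. Choose a hyperplane $W\subsetneq M(\p)$ and let $P:=P_{\p,W}$. By Theorem~\ref{thm:modclass}(a), $P$ is a prime submodule with $(P:M)=\p$. In the real case, if $\p$ is real then $\kappa(\p)$ is real, and Theorem~\ref{thm:modclass}(b) makes $P$ real prime.
\item[(2)] By hypothesis, $P=\bigcap_jP_j$ with each $P_j$ maximal (real maximal in case (b)). Put $\m_j:=(P_j:M)$. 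Each $\m_j$ is a maximal ideal, real in case (b) by Theorem~\ref{thm:modclass}(c), and $\m_j\supseteq(P:M)=\p\supseteq I$.
\item[(3)] Let $a\in\bigcap_j\m_j$. Then $aM\subseteq P_j$ for all $j$, so $aM\subseteq P$ and $a\in(P:M)=\p$. Therefore $\p=\bigcap_j\m_j$, and passing to $\bar R$ shows that $\bar R$ is Jacobson, resp.\ real Jacobson.
\end{enumerate}
The real case also needs the converse correspondence: a real prime $\bar\p$ of $\bar R$ corresponds to a real prime $\p\supseteq I$ of $R$, by the ideal correspondence recorded in Section~\ref{sec:basic}. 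Real maximal ideals of $R$ containing $I$ likewise descend to real maximal ideals of $\bar R$.

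The main obstacle is step~(1): producing a prime (real prime) submodule of $M$ with annihilator exactly a prescribed prime $\p\supseteq I$. This is where finite generation and faithfulness enter, through $\operatorname{Supp}M=V(\operatorname{Ann}M)$ and hence $M(\p)\neq0$. Once this is available, the classification theorem does the rest. Note that $\p=(P:M)$ holds because $M/P_{\p,W}$ is torsion-free over $R/\p$, as shown in the proof of Theorem~\ref{thm:modclass}(a).
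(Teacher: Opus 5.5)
Your proof is correct and follows essentially the same route as the paper. For the ``if'' direction you apply the separation theorem over $R/I$, invoking Theorem~\ref{thmA} after a change of rings, whereas the paper applies Theorem~\ref{thm:jac} directly over $R$. For the ``only if'' direction you build a (real) prime submodule with annihilator exactly $\p$ and take annihilators of its maximal-submodule decomposition. There the only cosmetic difference is that the paper uses $K(0,\p)=P_{\p,0}$ and Lemma~\ref{lem:Kproper}(b), where you use a hyperplane $W$ and $\operatorname{Supp}M=V(I)$ with Nakayama.
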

\begin{proof}
In (a) use all prime and maximal ideals and submodules; in (b) use only the real ones. If $R/I$ has the indicated property, the annihilator of each prime submodule $P$ contains $I$ and is an intersection of the indicated maximal ideals. Theorem~\ref{thm:jac} and Lemma~\ref{lem:embed} express $P$ as an intersection of the corresponding maximal submodules.

Conversely, let $\p\supseteq I$ be a prime ideal of the indicated type. Lemma~\ref{lem:Kproper}(b) makes $K(0,\p)$ proper, and Theorem~\ref{thm:modclass} makes it prime with annihilator $\p$, and real in (b). Write it as an intersection of the corresponding maximal submodules. Taking annihilators expresses $\p$ as an intersection of the corresponding maximal ideals. The quotient correspondence for ideals (Section~\ref{sec:basic}) proves the ring property.
\end{proof}

\subsection{Separation for sheaves}\label{sec:jacobson-sheaves}

The module separation theorem passes to sheaves through the classification of point kernels. Here coherence is needed for the prime quotient, rather than for classification of the maximal subsheaves.

\begin{proposition}[Separation by maximal subsheaves]\label{prop:sheafseparation}
Let $X$ be of finite type over a field and $\F$ quasi-coherent. Every prime subsheaf $\mathcal P\subsetneq\F$ with coherent quotient is an intersection of maximal quasi-coherent subsheaves containing it. If $\mathcal P$ is real prime, it is an intersection of real maximal subsheaves.
\end{proposition}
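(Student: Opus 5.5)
By Proposition~\ref{prop:primeclass}(b) we write $\mathcal P=\mathcal P_{\eta,W}$ with $\eta$ the attached point of $\mathcal Q:=\F/\mathcal P$ and $W=\mathcal P(\eta)\subsetneq\F(\eta)$. Let $Z$ be the closure of $\eta$. The plan is to show that every local section not in $\mathcal P$ is excluded by some $\mathcal P_{x,H}$ with $x\in Z$ closed and $H\supseteq\mathcal P(x)$ a hyperplane of $\F(x)$. Each such $\mathcal P_{x,H}$ is maximal by Proposition~\ref{prop:maxsheaf} and contains $\mathcal P$ by Proposition~\ref{prop:primeclass}(a). Since both $\mathcal P$ and the intersection are quasi-coherent (Lemma~\ref{lem:kernelint}), it suffices to compare sections on the members of an affine open cover.

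Fix an affine open $U=\Spec A$. Here $A$ is a finitely generated $k$-algebra, hence Jacobson, and $A$ is real Jacobson by Proposition~\ref{prop:fgreal} and Remark~\ref{rem:fgreal}(a). Put $M=\Gamma(U,\F)$ and $P=\Gamma(U,\mathcal P)$.
\begin{itemize}
\item If $\eta\notin U$, then $P=M$ by Lemma~\ref{lem:primesections}, and there is nothing to prove on $U$.
\item If $\eta\in U$, Lemma~\ref{lem:primesections} makes $P$ a prime submodule with $(P:M)=\p_\eta$. Coherence of $\mathcal Q$ makes $M/P=\Gamma(U,\mathcal Q)$ finitely generated.
\end{itemize}
In the second case, apply Theorem~\ref{thm:jac} with $\M$ the set of all maximal ideals of $A$ containing $\p_\eta$. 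This gives
\[
P=\bigcap_{\m\in\M}(P+\m M),
\]
and each $P+\m M$ is a proper submodule by Remark~\ref{rem:empty}. Moreover $P+\m M$ is an intersection of maximal submodules $P_{\m,H}$ with $H$ a hyperplane of $M(\m)$ containing $P(\m)$.

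The main obstacle is transporting these module-level maximal submodules to global maximal subsheaves. A maximal ideal $\m\supseteq\p_\eta$ of $A$ gives a point $x\in U\cap Z$ that is closed in $U$. Since $X$ is of finite type over a field, $\kappa(x)$ is finite over $k$ by Zariski's lemma, so $x$ is closed in $X$. This is exactly where the finite-type hypothesis enters, and it rules out the failure mode illustrated by the discrete valuation ring example of Section~\ref{sec:basic}.

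For such a closed $x$ and a hyperplane $H\supseteq P(\m)$, we have $\mathcal P(x)=P(\m)\subseteq H$ by Lemma~\ref{lem:fibreid}(d) and the affine dictionary. Hence $\mathcal P\subseteq\mathcal P_{x,H}$ by Proposition~\ref{prop:primeclass}(a). By the same proposition, $\Gamma(U,\mathcal P_{x,H})=P_{\m,H}$. Intersecting over all these pairs $(x,H)$ shows that the sections over $U$ of the global intersection lie in $P$. The reverse inclusion holds by construction, so the intersection equals $\mathcal P$ on $U$. Doing this on an affine cover proves the first claim.

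For the real case, assume $\mathcal P$ is real prime, so $\kappa(\eta)$ is real by Proposition~\ref{prop:realprimesheaf}. Then $\p_\eta$ is a real prime ideal of $A$ by Lemma~\ref{lem:primequot}. By Proposition~\ref{prop:fgreal}, $\p_\eta$ is the intersection of the real maximal ideals containing it, so we take $\M$ to be that set instead. The argument above runs unchanged with this $\M$. The resulting closed points have real residue fields, so the maximal subsheaves $\mathcal P_{x,H}$ are real by Proposition~\ref{prop:maxsheaf}.
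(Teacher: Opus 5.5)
Your proposal is correct and follows essentially the same route as the paper. Both arguments work on affine charts containing the attached point $\eta$, apply Theorem~\ref{thm:jac} using the (real) Jacobson property of the finitely generated coordinate ring, identify the resulting maximal submodules as the $P_{\m,H}$, use Zariski's lemma to see that the corresponding points are closed in $X$, and assemble the global intersection via Lemma~\ref{lem:kernelint}, with charts missing $\eta$ imposing no condition.
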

\begin{proof}
Write $\mathcal P=\mathcal P_{\eta,W}$ by Proposition~\ref{prop:primeclass}. On an affine $U=\Spec A$ containing $\eta$, its sections $P\subseteq M=\Gamma(U,\F)$ are prime with finitely generated quotient, hence torsionless over $A/(P:M)$ by Lemmas~\ref{lem:primequot} and~\ref{lem:embed}. The ring $A$ is Jacobson, and real Jacobson by Proposition~\ref{prop:fgreal} and Remark~\ref{rem:fgreal}(a). In the real case, $P$ is real by Proposition~\ref{prop:sheafaffine}(b). Apply Theorem~\ref{thm:jac} to all maximal ideals containing $(P:M)$, or all real maximal ideals in the real case. Thus $P$ is an intersection of the corresponding maximal submodules of $M$.

By the classification, these are precisely the $P_{\m,H}$ with $P(\m)\subseteq H$ a hyperplane, restricting to real residue fields in the real case. Their residue fields are finite over $k$ by Zariski's lemma, so their points are closed in $X$ \cite[Tag~01TF, applied to $X\to\Spec k$]{Stacks}. They therefore define global maximal subsheaves containing $\mathcal P$. An affine chart not containing $\eta$ misses its closure and has $\mathcal P|_U=\F|_U$; no closed point there imposes a condition. Lemma~\ref{lem:kernelint}, grouping hyperplanes over each point, assembles the affine intersections into the required global intersection.

This proves Theorem~\ref{thmA}(c).
\end{proof}

\section{Fibre Nullstellens\"atze for modules}\label{sec:fibres}

A set of points suffices for a radical exactly when it suffices to separate the corresponding primes. We first formulate this principle for arbitrary fibre conditions.

For $S\subseteq\Spec R$ and subspaces $W_\p\subseteq M(\p)$, put
\[
\Phi_S(N):=(N(\p))_{\p\in S},\qquad
\Psi_S(W):=\bigcap_{\p\in S}P_{\p,W_\p},\qquad
\rad_S:=\Psi_S\Phi_S.
\]
Families are ordered componentwise. Since $N(\p)$ is spanned by the values of elements of $N$,
\begin{equation}\label{eq:fibreadjunction}
\Phi_S(N)\subseteq W\quad\Longleftrightarrow\quad N\subseteq\Psi_S(W).
\end{equation}
Both maps preserve inclusions. This Galois connection gives $\Phi\Psi\Phi=\Phi$ and $\Psi\Phi\Psi=\Psi$: hence $\rad_S N$ is the largest submodule with the same fibre images as $N$ on $S$. The maps restrict to inverse inclusion-preserving bijections between submodules fixed by $\rad_S$ and \emph{realizable families}, meaning families of the form $\Phi_S(N)$. Equivalently, a family is realizable if $\Phi\Psi(W)=W$.

Write $\Spec_{\mathrm{re}}R:=\{\p\in\Spec R:\kappa(\p)\text{ is real}\}$ and $\operatorname{Max}_{\mathrm{re}}(R):=\operatorname{Max}(R)\cap\Spec_{\mathrm{re}}R$.

\begin{theorem}[Fibre Nullstellens\"atze]\label{thm:modfibre}
Let $M$ be an $R$-module and $S\subseteq\Spec R$.
\begin{enumerate}
\item[(a)] For every $N\subseteq M$,
\[
\rad_S N=\bigcap_{\substack{P\supseteq N,\ P\text{ prime}\\(P:M)\in S}}P.
\]
\item[(b)] $S=\Spec R$ gives $\rad_S=\rad$, and $S=\Spec_{\mathrm{re}}R$ gives $\rad_S=\rrad$. If $M$ is finitely generated, $S=\operatorname{Max}(R)$ gives $\rad_S=\rad$ when $R$ is Jacobson, and $S=\operatorname{Max}_{\mathrm{re}}(R)$ gives $\rad_S=\rrad$ when $R$ is real Jacobson.
\end{enumerate}
\end{theorem}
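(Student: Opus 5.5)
For (a), unwind the definitions: $\rad_S N=\bigcap_{\p\in S}P_{\p,N(\p)}$. For each $\p\in S$ there are two cases. If $N(\p)=M(\p)$, then $P_{\p,N(\p)}=M$ and the term contributes nothing. If $N(\p)\subsetneq M(\p)$, we use that in the finite-dimensional-free setting a proper subspace is the intersection of the hyperplanes (more generally, of the proper subspaces) containing it; in fact we only need that $N(\p)=\bigcap W$ over proper subspaces $W\supseteq N(\p)$, which is trivial since $N(\p)$ itself is proper. By Theorem~\ref{thm:modclass}(a), the prime submodules $P$ with $(P:M)=\p$ are exactly the $P_{\p,W}$ with $W\subsetneq M(\p)$, and by \eqref{eq:fibreadjunction} (with $S=\{\p\}$) $N\subseteq P_{\p,W}$ iff $N(\p)\subseteq W$. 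Since $W\mapsto P_{\p,W}$ preserves inclusions and intersections, the intersection of all primes $P\supseteq N$ with $(P:M)=\p$ is $P_{\p,N(\p)}$ when $N(\p)$ is proper, and is the empty intersection $M$ when $N(\p)=M(\p)$. Intersecting over $\p\in S$ gives (a).

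For (b) with $S=\Spec R$, (a) says that $\rad_S N$ is the intersection of all prime submodules containing $N$. Prime submodules are semiprime, so this intersection is semiprime by Proposition~\ref{prop:closprop}(b) and contains $\rad N$. Conversely, \cite[Theorem~A]{Cdec} writes $\rad N$ as an intersection of primes, each of which contains $N$. Hence equality holds. For $S=\Spec_{\mathrm{re}}R$, Theorem~\ref{thm:modclass}(b) identifies primes with $(P:M)\in S$ with the real prime submodules. Those are real, so their intersection contains $\rrad N$, and \cite[Theorem~B]{Cdec} gives the reverse inclusion.

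For the maximal cases, let $M$ be finitely generated. By Theorem~\ref{thm:modclass}(c), the primes with $(P:M)$ maximal (resp.\ real maximal) are exactly the maximal (resp.\ real maximal) submodules. Then (a) says that $\rad_S N$ is the intersection of maximal (resp.\ real maximal) submodules containing $N$, and this contains $\rad N$ (resp.\ $\rrad N$), because these submodules are prime, hence semiprime (resp.\ real). For the reverse inclusion, write $\rad N$ as an intersection of primes $P\supseteq N$ (resp.\ real primes) by \cite[Theorems~A,B]{Cdec}. Each $M/P$ is finitely generated, so Theorem~\ref{thmA}(a) (resp.\ (b)) writes $P$ as an intersection of maximal (resp.\ real maximal) submodules, each of which contains $N$.

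The main point needing care, though it is minor, is the bookkeeping that primes with annihilator $\p$ correspond to proper subspaces and that vanishing fibres ($N(\p)=M(\p)$) contribute $M$. No real obstacle is expected beyond correctly invoking Theorem~\ref{thmA}.
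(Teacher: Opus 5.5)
Your argument follows the paper's proof. Part (a) classifies the primes with annihilator $\p$ as the $P_{\p,W}$ with $W\subsetneq M(\p)$, takes $P_{\p,N(\p)}$ as the smallest one containing $N$, and notes that there are none when $N(\p)=M(\p)$. Part (b) uses \cite[Theorems~A and~B]{Cdec} together with Theorem~\ref{thmA}.

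There is one false statement in your treatment of the closed cases. Theorem~\ref{thm:modclass}(c) does \emph{not} say that the primes with maximal annihilator are exactly the maximal submodules. By Theorem~\ref{thm:modclass}(a), those primes are all the $P_{\m,W}$ with $W\subsetneq M(\m)$ an arbitrary proper subspace, and only those with $W$ a hyperplane are maximal. For instance, $\m R^2\subseteq R^2$ is prime with annihilator $\m$, since its quotient is $(R/\m)^2$, but it is not maximal.

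This does not damage the proof, because only the true direction is needed:
\begin{itemize}
\item For $\rad_{\operatorname{Max}(R)}N\subseteq\rad N$, you only use that each maximal (resp.\ real maximal) submodule produced by Theorem~\ref{thmA} is prime with annihilator in $\operatorname{Max}(R)$ (resp.\ $\operatorname{Max}_{\mathrm{re}}(R)$). Such a submodule therefore occurs in the intersection of part (a).
\item For the reverse inclusion, skip the detour through maximal submodules. Part (a) already presents $\rad_S N$ as an intersection of prime submodules, which are real prime in the real case by Theorem~\ref{thm:modclass}(b). By Proposition~\ref{prop:closprop}(b) this intersection is semiprime (resp.\ real), exactly as you argued for $S=\Spec R$; this is what the paper does.
\end{itemize}

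Your identification of $\rad_{\operatorname{Max}(R)}N$ with the intersection of the maximal submodules containing $N$ is still true. However, it requires the additional fact that every proper subspace of $M(\m)=M/\m M$ is the intersection of the hyperplanes containing it.
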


\begin{proof}
(a) By definition,
\[
\rad_S N=\bigcap_{\p\in S}P_{\p,N(\p)}.
\]
Fix $\p\in S$. Theorem~\ref{thm:modclass}(a) identifies the primes containing $N$ with annihilator $\p$ as the $P_{\p,W}$ with $N(\p)\subseteq W\subsetneq M(\p)$. If $N(\p)$ is proper, $P_{\p,N(\p)}$ is the smallest of these primes, so it is their intersection. If $N(\p)=M(\p)$, there are no such primes; their empty intersection and $P_{\p,N(\p)}$ both equal $M$. Intersecting over $\p\in S$ proves (a) and also gives
\[
\rad_S N=\bigcap_{\p\in S}K(N,\p).
\]
Thus $\rad_S N$ is semiprime and contains $\rad N$. If $S\subseteq\Spec_{\mathrm{re}}R$, all the primes in (a) are real by Theorem~\ref{thm:modclass}(b), so $\rad_S N$ is real and contains $\rrad N$.

(b) The first two equalities follow from (a) and the prime-decomposition theorems \cite[Theorems~A and~B]{Cdec}. Suppose now that $M$ is finitely generated and $R$ is Jacobson. By \cite[Theorem~A]{Cdec}, $\rad N$ is an intersection of prime submodules containing $N$, and Theorem~\ref{thmA}(a) expresses each of these as an intersection of maximal submodules. Each such maximal submodule is prime with maximal annihilator by Theorem~\ref{thm:modclass}(c), so it occurs in the intersection defining $\rad_{\operatorname{Max}(R)}N$ in (a). Hence $\rad_{\operatorname{Max}(R)}N\subseteq\rad N$, while (a) gives the reverse inclusion. The same argument with \cite[Theorem~B]{Cdec} and Theorem~\ref{thmA}(b) proves the real equality over a real Jacobson ring. In these closed cases the radical is also the intersection of the maximal, resp.\ real maximal, submodules containing $N$.

If $R$ is a finitely generated $k$-algebra, Zariski's lemma \cite[Proposition~7.9]{AM} identifies all maximal ideals with the $\m_a$, $a\in V_R(k)$, when $k$ is algebraically closed; Remark~\ref{rem:fgreal}(b) identifies the real maximal ideals with these $\m_a$ when $k$ is real closed. Thus the corresponding closed-fibre conditions read $f(a)\in N(a)$ for every $a\in V_R(k)$. In each case, the Galois bijection identifies semiprime, resp.\ real, submodules with realizable families on the indicated points.
\end{proof}

\begin{proof}[Proof of Theorem~\textup{\ref{thmB}(a),(b)}]
Take $S=\operatorname{Max}(R)$ in (a) and $S=\operatorname{Max}_{\mathrm{re}}(R)$ in (b). By Theorem~\ref{thm:modfibre}(b), $\rad=\rad_S$, resp.\ $\rrad=\rad_S$, and by definition $\rad_S N=\{f\in M:f(\m)\in N(\m)\text{ for all }\m\in S\}$; this is the membership criterion. The maximality and bijection statements are the Galois identities for this $S$.
\end{proof}

\begin{example}[Polynomial radial vector fields]\label{ex:radial}
Consider the module of \cite[Theorem~11]{JS}: let $k$ be a real closed field, $R=k[x,y]$, $f=(x,y)\in R^2$ and $N=(x,y)f\subseteq R^2$. Then $N(a)=k\,a$ for $0\neq a\in k^2$ and $N(0)=0$, so by Theorem~\ref{thmB}(b), $g\in\rrad N$ if and only if $g(a)\in k\,a$ for every $a\in k^2$; as $k$ is infinite, this means $g_1y=g_2x$, i.e.\ $g\in Rf$. Since $f\in(N:f)R^2$, semiprimeness forces $f\in\rad N\subseteq\rrad N$, so
\[
\rad N=\rrad N=Rf,
\]
the polynomial radial vector fields, although $f\notin N$. Moreover, $Rf=P_{(0),\,\kappa((0))f}$ with $\kappa((0))=k(x,y)$ is a prime submodule attached to the generic point.
\end{example}

\begin{lemma}[Properness and semireality]\label{lem:modproper}
Let $N\subseteq M$, $Q=M/N$ and $S=\Spec_{\mathrm{re}}R$. The following conditions are equivalent:
\begin{center}
\begin{tabular}{@{}p{0.30\linewidth}@{\hspace{0.04\linewidth}}p{0.65\linewidth}@{}}
(i) $Q$ is semireal;  & (iv) there is a real prime submodule $P\subset M$ containing $N$; \\[3pt]
(ii) $Q(\p)\neq0$ for some $\p\in S$; & (v) $N(\p)\neq M(\p)$ for some $\p\in S$;  \\[3pt]
(iii) $\rad_S 0\neq Q$;  & (vi) $\rad_S N\neq M$;
\end{tabular}
\end{center}
If $Q$ is finitely generated, these are also equivalent to
\begin{center}
\begin{tabular}{@{}p{0.30\linewidth}@{\hspace{0.04\linewidth}}p{0.65\linewidth}@{}}
(vii)  $R/\Ann_R(Q)$ is semireal; & (viii) $(N:M)$  is contained in a real prime ideal.
\end{tabular}
\end{center}
If $Q$ is finitely generated and $R$ is real Jacobson, one may take $S=\operatorname{Max}_{\mathrm{re}}(R)$ instead. For this choice of $S$, ``prime'' in (iv) may be replaced by ``maximal''.
\end{lemma}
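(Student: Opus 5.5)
We prove the equivalences in a cycle built around the Galois connection and Theorem~\ref{thm:modfibre}(a), first for $S=\Spec_{\mathrm{re}}R$.

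\emph{Equivalence of (ii), (iii), (v), (vi).} Since $Q(\p)=M(\p)/N(\p)$ by right exactness of $-\otimes_R\kappa(\p)$, the conditions $Q(\p)\neq0$ and $N(\p)\neq M(\p)$ coincide, so (ii)$\Leftrightarrow$(v). By definition, $\rad_SN=\bigcap_{\p\in S}P_{\p,N(\p)}$. This equals $M$ exactly when every $N(\p)=M(\p)$, because $P_{\p,W}\neq M$ whenever $W\subsetneq M(\p)$: the fibre values span. Hence (v)$\Leftrightarrow$(vi). Applying the same argument to $0\subseteq Q$ gives (ii)$\Leftrightarrow$(iii).

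\emph{Equivalence of (iv) and (vi).} By Theorem~\ref{thm:modfibre}(a), $\rad_SN$ is the intersection of the primes $P\supseteq N$ with $(P:M)\in S$, and by Theorem~\ref{thm:modclass}(b) these are exactly the real primes containing $N$. The intersection is proper if and only if the family is nonempty.

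\emph{Equivalence of (i) and (iv).} If a real prime $P\supseteq N$ exists, then $M/P$ is a nonzero real quotient of $Q$, which gives (i). Conversely, if $Q$ has a nonzero real quotient $M/N'$ with $N'\supseteq N$ real and proper, then by \cite[Theorem~B]{Cdec} $N'$ is an intersection of real primes. Since $N'\neq M$, this family is nonempty, and any member is a real prime containing $N$.

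\emph{Finitely generated case: (vii), (viii).} Here (vii)$\Leftrightarrow$(viii) is the quotient correspondence for real primes in Section~\ref{sec:basic}, because $\Ann_R(Q)=(N:M)$. For (v)$\Leftrightarrow$(viii), apply Lemma~\ref{lem:Kproper}(b). By Lemma~\ref{lem:fibreid}(c), $N(\p)\neq M(\p)$ iff $K(N,\p)\neq M$, and for finitely generated $M/N$ this holds iff $\p\supseteq(N:M)$. Taking $\p$ real prime gives the claim.

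\emph{Passing to $S=\operatorname{Max}_{\mathrm{re}}(R)$.} This is the only step needing the real Jacobson hypothesis. Statements (ii), (iii), (v), (vi) and (iv) remain mutually equivalent by the same arguments with the smaller $S$. Theorem~\ref{thm:modclass}(c) identifies the primes with annihilator in $\operatorname{Max}_{\mathrm{re}}$ as the real maximal submodules, which justifies the word ``maximal'' in (iv). It then suffices to link these conditions to (viii). If $(N:M)\subseteq\p$ with $\p$ real prime, the real Jacobson property writes $\p$ as an intersection of real maximal ideals. This intersection is nonempty, because $\p\neq R$. Pick such an $\m\supseteq\p\supseteq(N:M)$; Lemma~\ref{lem:Kproper}(b) then gives $N(\m)\neq M(\m)$. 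The converse is immediate, since real maximal ideals are real prime. Alternatively, one could cite Theorem~\ref{thm:modfibre}(b) to get $\rad_{\operatorname{Max}_{\mathrm{re}}}N=\rrad N=\rad_{\Spec_{\mathrm{re}}}N$ and compare (vi) for the two choices of $S$. That route needs $M$ finitely generated, whereas only $Q$ is assumed so, which is why I prefer the Lemma~\ref{lem:Kproper} route.

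The main subtlety I expect is the empty-family and properness bookkeeping, in particular the need for a nonempty intersection when invoking \cite[Theorem~B]{Cdec} and the real Jacobson property.
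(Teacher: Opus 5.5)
Your proof is essentially the paper's. The identity $Q(\p)=M(\p)/N(\p)$ links (ii), (iii), (v), (vi); Theorem~\ref{thm:modfibre}(a) with Theorem~\ref{thm:modclass}(b) gives (iv)$\Leftrightarrow$(vi); Lemma~\ref{lem:Kproper}(b) connects to (viii); and the real Jacobson property supplies a real maximal $\m\supseteq\p$. The only routing difference is that you attach (i) to (iv) directly via \cite[Theorem~B]{Cdec}. The paper instead attaches (i) to (iii) through $\rad_S0=\rrad_Q0$ from Theorem~\ref{thm:modfibre}(b), which rests on the same theorem.

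One step is misstated, though the repair is easy. Theorem~\ref{thm:modclass}(c) does \emph{not} identify the primes with annihilator in $\operatorname{Max}_{\mathrm{re}}(R)$ with the real maximal submodules. Those primes are all the $P_{\m,W}$ with $W\subsetneq M(\m)$ an arbitrary proper subspace, and only those with $W$ a hyperplane are maximal. For example, for $\m\in\operatorname{Max}_{\mathrm{re}}(R)$ and $M=R^2$, the submodule $\m M=P_{\m,0}$ is real prime with annihilator $\m$, but $M/\m M\cong(R/\m)^2$ is not simple.

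What you actually need is only the existence equivalence. Given a prime $P_{\m,W}\supseteq N$, or directly $N(\m)\subsetneq M(\m)$ from (v), choose a hyperplane $H$ containing $W$, respectively $N(\m)$. Then $P_{\m,H}\supseteq N$ is real maximal by Theorem~\ref{thm:modclass}(c). Conversely, a real maximal submodule containing $N$ is a real prime with annihilator in $\operatorname{Max}_{\mathrm{re}}(R)$. This hyperplane step is exactly how the paper justifies replacing ``prime'' by ``maximal''. With it inserted, your argument for the smaller $S$ goes through.
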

\begin{proof}
For any $S\subseteq\Spec R$, right exactness of tensoring with $\kappa(\p)$ gives $Q(\p)=M(\p)/N(\p)$; thus
\[
\rad_S N\neq M
\quad\Longleftrightarrow\quad
Q(\p)\neq0\text{ for some }\p\in S
\quad\Longleftrightarrow\quad
N(\p)\neq M(\p)\text{ for some }\p\in S.
\]
Indeed, the last condition means that some $f\in M$ fails a fibre condition defining $\rad_S N$. Thus (ii), (v) and (vi) are equivalent. Since $S=\Spec_{\mathrm{re}}R$, Theorem~\ref{thm:modfibre}(b) gives $\rad_S0=\rrad_Q0$. Semireality of $Q$ means that it has a nonzero real quotient, equivalently $\rrad_Q0\neq Q$, proving (i)$\Leftrightarrow$(iii). Applying the display to $0\subseteq Q$ gives (ii)$\Leftrightarrow$(iii). By Theorem~\ref{thm:modfibre}(a) and Theorem~\ref{thm:modclass}(b), $\rad_S N$ is the intersection of the real prime submodules containing $N$; this intersection is proper exactly when there is such a submodule. Hence (iv)$\Leftrightarrow$(vi).

Since $\Ann_R(Q)=(N:M)$, condition (vii) is equivalent to (viii) by the real prime criterion for semireal rings. If $Q$ is finitely generated, Lemma~\ref{lem:Kproper}(b) gives $Q(\p)\neq0$ exactly when $(N:M)\subseteq\p$, proving (ii)$\Leftrightarrow$(viii). If $R$ is also real Jacobson, every real prime is contained in a real maximal ideal; applying the same lemma shows that closed real fibres suffice. For $S=\operatorname{Max}_{\mathrm{re}}(R)$, condition (v) gives $N(\m)\subsetneq M(\m)$ for some $\m\in S$. Choose a hyperplane $H$ containing $N(\m)$; then $P_{\m,H}$ is a real maximal submodule containing $N$, by Theorem~\ref{thm:modclass}(c). Conversely, a real maximal submodule containing $N$ gives such a proper fibre, and is real prime. A nonzero finitely generated $Q$ also has a nonzero closed fibre over any ring: choose a maximal ideal containing its annihilator. These observations prove the proper-fibre assertions following Theorem~\ref{thmB}.

\end{proof}

Without finite generation, semireality of $Q$ still implies (viii), since its annihilator lies in that of a real prime quotient. The converse fails for the faithful $\Z$-module $\bigoplus_{p\text{ prime}}\Z/p\Z$: every nonzero quotient is torsion, whereas real $\Z$-modules are torsion-free \cite[Example~4.5(3)]{Cdec}. Closed fibres also need not detect semireality: $\R(x)$ is real over $\R[x]$ but all its closed fibres vanish (Example~\ref{ex:Q}(e)). Parts (a) and (e) of Example~\ref{ex:Q} likewise show that finite generation is needed in Proposition~\ref{prop:ann}.

\begin{example}[Semiprime and semireal, but not real]\label{ex:realex}
Let $R=\R[x]$, $M=R^2$ and $N:=(x^2+1)R\oplus0$. Then $M/N\cong\R[x]/(x^2+1)\oplus R$ is a direct sum of semiprime modules, so $N$ is a semiprime submodule \cite[Lemma~2.1(f)]{Cdec}. But $N(a)=\R\oplus0$ for every $a\in\R$, as $a^2+1\neq0$, so $\rrad N=R\oplus0\supsetneq N$ by Theorem~\ref{thm:modfibre}(b); hence $N$ is not a real submodule. Thus the module $M/N$ is semiprime and semireal, as it has the nonzero real quotient $R$, but not real. Hence semiprimeness and semireality together do not imply reality.
\end{example}

\begin{remark}[Directional vanishing]\label{sec:directional}
A \emph{directional point} of $M$ is a pair $(\p,H)$ with $H\subset M(\p)$ a hyperplane; write $\operatorname{Dir}M$ for their set. The corresponding $P_{\p,H}$ are the fibrewise maximal primes (compare \cite[Corollaries~9 and~11]{THS}). For $S\subseteq\Spec R$, define
\[
V_S(N):=\{(\p,H)\in\operatorname{Dir}M:\p\in S,\ N(\p)\subseteq H\},
\qquad
J(T):=\bigcap_{(\p,H)\in T}P_{\p,H},
\]
where $T$ is a set of directional points over $S$.

Every fibre subspace is the intersection of the hyperplanes containing it, with empty intersection $M(\p)$. Thus $V_S(N)$ carries exactly the same information as $\Phi_S(N)$, and taking inverse images gives
\begin{equation}\label{eq:radSdir}
J(V_S(N))=\Psi_S\Phi_S(N)=\rad_S N.
\end{equation}
The $\Phi_S$--$\Psi_S$ correspondence therefore becomes an inclusion-reversing bijection between submodules fixed by $\rad_S$ and directional zero sets. The former are the semiprime, resp.\ real, submodules for the corresponding choices of $S$ in Theorem~\ref{thm:modfibre}.

For $M=R^n$, a hyperplane is the kernel of a nonzero form $\lambda_v(f(\p))=\sum_i v_i f_i(\p)$, so directional vanishing recovers the formalism of \cite{C22a,C13}. For $M=R$, there is one direction over each point, giving the usual ideal correspondence.
\end{remark}

\section{Fibre Nullstellens\"atze for sheaves}\label{sec:fibres-sheaves}

For quasi-coherent $\G\subseteq\F$ on a Noetherian scheme $X$, $S\subseteq X$ and subspaces $W_x\subseteq\F(x)$, define
\[
\Phi_S(\G):=(\G(x))_{x\in S},\qquad
\Psi_S(W):=\bigcap_{x\in S}\mathcal P_{x,W_x},\qquad
\rad_S:=\Psi_S\Phi_S.
\]
Lemma~\ref{lem:kernelint} makes $\Psi_S(W)$ quasi-coherent. Proposition~\ref{prop:primeclass}(a) gives the Galois connection \eqref{eq:fibreadjunction}; its identities, largest-subobject interpretation and bijection with realizable families therefore apply unchanged.

Write $X_{\mathrm{re}}:=\{x\in X:\kappa(x)\text{ is real}\}$ and let $X_{\mathrm{cl}}$ be the set of closed points of $X$.

\begin{theorem}[Fibre Nullstellens\"atze for sheaves]\label{thm:sheaffibre}
Let $\F$ be quasi-coherent on a Noetherian scheme $X$ and $S\subseteq X$.
\begin{enumerate}
\item[(a)] For every quasi-coherent $\G\subseteq\F$,
\[
\rad_S\G=\bigcap_{\substack{\mathcal P\supseteq\G,\ \mathcal P\text{ prime}\\\text{attached point in }S}}\mathcal P.
\]
\item[(b)] $S=X$ gives $\rad_S=\rad$, and $S=X_{\mathrm{re}}$ gives $\rad_S=\rrad$. If $\F$ is coherent and $X$ is of finite type over a field, $S=X_{\mathrm{cl}}$ gives $\rad_S=\rad$, and $S=X_{\mathrm{re}}\cap X_{\mathrm{cl}}$ gives $\rad_S=\rrad$.
\end{enumerate}
\end{theorem}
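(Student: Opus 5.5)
The plan is to reduce everything to the module statement Theorem~\ref{thm:modfibre} on affine charts. Lemma~\ref{lem:kernelint} already provides the affine formula for $\rad_S\G$, so this transfer is direct. The step needing care is comparing the sheaf radicals $\rad\G$ and $\rrad\G$, which are defined as intersections of all semiprime (real) quasi-coherent subsheaves, with the affine radicals.

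For (a), fix $x\in S$ and apply the classification of prime subsheaves, Proposition~\ref{prop:primeclass}(b). The primes containing $\G$ with attached point $x$ are exactly the $\mathcal P_{x,W}$ with $\G(x)\subseteq W\subsetneq\F(x)$; the containment criterion is Proposition~\ref{prop:primeclass}(a). If $\G(x)$ is proper, the smallest of these is $\mathcal P_{x,\G(x)}$, which is therefore their intersection. If $\G(x)=\F(x)$, the family is empty and both sides equal $\F$. Intersecting over $x\in S$ gives (a), exactly as in the module proof.

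For (b), I use Lemma~\ref{lem:kernelint} and Proposition~\ref{prop:primeclass}(a). On an affine $U=\Spec A$ with $M=\Gamma(U,\F)$ and $N=\Gamma(U,\G)$, they give
\[
\Gamma(U,\rad_S\G)=\bigcap_{\p\in S\cap U}P_{\p,N(\p)}=\rad_{S\cap U}N .
\]
\begin{enumerate}
\item[(1)] For $S=X$, resp.\ $S=X_{\mathrm{re}}$, the set $S\cap U$ is $\Spec A$, resp.\ $\Spec_{\mathrm{re}}A$, because residue fields agree. Theorem~\ref{thm:modfibre}(b) then identifies the sections with $\rad N$, resp.\ $\rrad N$. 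In particular, $\rad_S\G$ is a quasi-coherent subsheaf containing $\G$, and it is semiprime (resp.\ real) by Proposition~\ref{prop:sheafaffine}.
\item[(2)] Conversely, let $\mathcal H\supseteq\G$ be any semiprime (real) quasi-coherent subsheaf. Proposition~\ref{prop:sheafaffine} makes $\Gamma(U,\mathcal H)$ semiprime (real) and containing $N$, so it contains $\rad N$ (resp.\ $\rrad N$), which is $\Gamma(U,\rad_S\G)$.
\item[(3)] Inclusion of quasi-coherent sheaves is checked on affines, so $\rad_S\G\subseteq\mathcal H$. Hence $\rad_S\G$ is the smallest such subsheaf, and it equals the intersection $\rad\G$ (resp.\ $\rrad\G$), which is thereby quasi-coherent.
\end{enumerate}

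For the closed-point versions, assume $\F$ coherent and $X$ of finite type over $k$. There are two routes.
\begin{enumerate}
\item[(a)] Affine route. Each $A$ is Jacobson, and real Jacobson by Proposition~\ref{prop:fgreal} and Remark~\ref{rem:fgreal}(a). Each $M$ is finitely generated, so Theorem~\ref{thm:modfibre}(b) gives $\rad N=\rad_{\operatorname{Max}A}N$ and the real analogue.
\item[(b)] The point set must match. I need $X_{\mathrm{cl}}\cap U$ to equal the set of maximal ideals of $A$. By Zariski's lemma, maximal ideals have residue field finite over $k$, so their points are closed in $X$ (the Stacks tag cited in Proposition~\ref{prop:sheafseparation}). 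Conversely, a point closed in $X$ is closed in $U$. Intersecting with $X_{\mathrm{re}}$ gives the real maximal ideals.
\item[(c)] Global route. Alternatively, argue via (a) together with Proposition~\ref{prop:sheafseparation} and Remark~\ref{rem:catdecomposition}. This is less direct, and I would use it only as a check.
\end{enumerate}

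The main obstacle I expect is step (b): the general fact that a point closed in an affine chart need not be closed in $X$, as in the DVR example in Section~\ref{sec:basic}. That is exactly where the finite-type hypothesis enters.
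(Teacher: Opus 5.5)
Your proposal is correct. Part (a) and the cases $S=X$, $S=X_{\mathrm{re}}$ of (b) follow the paper essentially verbatim. For the closed-point cases, however, you take a different route.

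The paper argues globally. Since $\F$ is coherent, every prime quotient is coherent, so Proposition~\ref{prop:sheafseparation} (Theorem~\ref{thmA}(c)) writes each prime subsheaf containing $\G$ as an intersection of maximal subsheaves. Proposition~\ref{prop:maxsheaf} identifies these as primes attached to closed points. Substituting into the unrestricted prime decomposition from (a) with $S=X$ gives $\rad_{X_{\mathrm{cl}}}\G\subseteq\rad\G$, and the real case is similar. This is your ``global route''. The one difference is that the unrestricted decomposition already comes from (a) and $\rad_X=\rad$, so Remark~\ref{rem:catdecomposition} is not needed.

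Your affine route instead applies Theorem~\ref{thm:modfibre}(b) chart by chart. It matches $X_{\mathrm{cl}}\cap U$ with $\operatorname{Max}(A)$, and its real part with $\operatorname{Max}_{\mathrm{re}}(A)$, via Zariski's lemma and the closedness of points whose residue field is finite over $k$. That closedness fact is the same one used inside the proof of Proposition~\ref{prop:sheafseparation}.

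Both routes rest on the module separation theorem on charts. Yours is more direct: it avoids the global classification of maximal subsheaves and the reassembly via Lemma~\ref{lem:kernelint}. The paper's route makes Theorem~\ref{thmB}(c) a visible consequence of Theorem~\ref{thmA}(c). It also immediately exhibits the radicals as intersections of (real) maximal subsheaves. You would recover that description from Lemma~\ref{lem:maxsheaf}(b) by splitting each $\mathcal P_{x,\G(x)}$, with $x$ closed, into hyperplane kernels.

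One small point: your claim that ``each $A$ is Jacobson'' uses the fact that every affine open of a finite-type $k$-scheme has a finitely generated coordinate ring. Alternatively, work only on a finite-type affine cover, which suffices because equality of quasi-coherent subsheaves can be checked on a cover.
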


\begin{proof}
(a) By definition,
\[
\rad_S\G=\bigcap_{x\in S}\mathcal P_{x,\G(x)}.
\]
Fix $x\in S$. Proposition~\ref{prop:primeclass} identifies the primes containing $\G$ and attached to $x$ as the $\mathcal P_{x,W}$ with $\G(x)\subseteq W\subsetneq\F(x)$. If $\G(x)$ is proper, $\mathcal P_{x,\G(x)}$ is the smallest of these primes, so it is their intersection. If $\G(x)=\F(x)$, there are no such primes; their empty intersection and $\mathcal P_{x,\G(x)}$ both equal $\F$. Intersecting over $x\in S$ proves (a), as in the module argument of Theorem~\ref{thm:modfibre}(a). More explicitly, on $U=\Spec A$, put $M=\Gamma(U,\F)$ and $N=\Gamma(U,\G)$, and identify $S\cap U$ with a subset of $\Spec A$. Lemma~\ref{lem:kernelint} makes $\rad_S\G$ quasi-coherent with affine sections $\rad_{S\cap U}N$. By Theorem~\ref{thm:modfibre}(a), these submodules are semiprime, and real if $S\subseteq X_{\mathrm{re}}$. The affine criteria of Proposition~\ref{prop:sheafaffine}(a),(b) therefore make $\rad_S\G$ semiprime, resp.\ real, so it contains $\rad\G$, resp.\ $\rrad\G$.

(b) For $S=X$ or $S=X_{\mathrm{re}}$, Lemma~\ref{lem:kernelint} and Theorem~\ref{thm:modfibre}(b) identify the affine sections with the corresponding module radical. Thus, on $U=\Spec A$, with $M=\Gamma(U,\F)$ and $N=\Gamma(U,\G)$,
\begin{equation}\label{eq:sheafaffineradicals}
(\rad\G)|_U=\widetilde{\rad N},\qquad
(\rrad\G)|_U=\widetilde{\rrad N}.
\end{equation}
By Proposition~\ref{prop:sheafaffine}, these intersections are semiprime, respectively real. Every such quasi-coherent supersheaf $\mathcal H\supseteq\G$ has affine sections containing $\rad N$, respectively $\rrad N$, by the same proposition. Hence $\mathcal H$ contains the corresponding intersection, proving minimality. They are coherent when $\F$ is coherent, since $X$ is Noetherian. Together with (a), this proves the unrestricted prime-intersection decompositions.

Suppose now that $\F$ is coherent and $X$ is of finite type over a field. Every prime quotient of $\F$ is coherent, so Theorem~\ref{thmA}(c) expresses each prime subsheaf as an intersection of maximal subsheaves. By Proposition~\ref{prop:maxsheaf}, these are primes attached to closed points, hence they occur in (a) for $S=X_{\mathrm{cl}}$. Substituting these intersections in the unrestricted prime decomposition gives $\rad_{X_{\mathrm{cl}}}\G\subseteq\rad\G$; (a) gives the reverse inclusion. The same argument with real primes and real maximal subsheaves gives $\rad_{X_{\mathrm{re}}\cap X_{\mathrm{cl}}}\G=\rrad\G$. In these closed cases the radicals are also the intersections of the maximal, resp.\ real maximal, subsheaves containing $\G$. Over a real closed base field, the closed real points are precisely the rational points by Remark~\ref{rem:fgreal}(b).

In each case, the Galois identities identify semiprime, resp.\ real, subsheaves with realizable families on the indicated points. The radical is the largest quasi-coherent subsheaf with the same fibre images as $\G$ there; its sections over an open $U$ are the $s$ with $s(x)\in\G(x)$ for all $x\in S\cap U$, by Proposition~\ref{prop:primeclass}(a).
\end{proof}

\begin{remark}[Ideal case]\label{rem:idealcase}
For $\F=\OO_X$, each fibre $\kappa(x)$ has only the subspaces $0$ and $\kappa(x)$, and $\mathcal I(x)=0$ exactly when $x\in V(\mathcal I)$. Hence Theorem~\ref{thm:sheaffibre} gives
\[
\Gamma(U,\rad\mathcal I)=\{s\in\OO_X(U):s(x)=0\text{ for all }x\in U\cap V(\mathcal I)\},
\]
which coincides with the ideal sheaf of the reduced induced structure on $V(\mathcal I)$ \cite[Tag~01J3]{Stacks}. Thus radical quasi-coherent ideal sheaves correspond to closed subsets of $X$ \cite[Proposition~5.2.1]{EGA1}; closed points suffice on schemes of finite type over a field. Likewise, $\rrad\mathcal I$ is the ideal of sections vanishing on $V(\mathcal I)\cap X_{\mathrm{re}}$, and real quasi-coherent ideal sheaves correspond to the sets $Z\cap X_{\mathrm{re}}$ with $Z\subseteq X$ closed. If $X$ is of finite type over a real closed field $k$, closed real points are precisely the $k$-points. For affine $X$, this recovers the classical real Nullstellensatz $\mathcal I(V_k(I))=\sqrt[\mathrm{re}]{I}$ \cite[Section~4.1]{BCR}. For $M=R$, Theorem~\ref{thm:modfibre} similarly gives $\rad I=\sqrt I$ and $\rrad I$ as the intersection of the real primes containing $I$.
\end{remark}

\begin{remark}[Reduction of a sheaf]\label{rem:reduction}
Let $X$ be Noetherian and $\G\subseteq\F$ quasi-coherent. A local section of a semiprime quasi-coherent sheaf vanishing in every fibre on its domain is zero, since $\rad_X0=\rad0=0$; for real sheaves, real points suffice. Since quasi-coherent subsheaves of semiprime (resp.\ real) sheaves are semiprime (resp.\ real), every morphism $\F\to\mathcal E$ to a semiprime (resp.\ real) quasi-coherent sheaf that kills $\G$ factors uniquely through $\F/\rad\G$ (resp.\ $\F/\rrad\G$). Indeed, its kernel is semiprime (resp.\ real) and contains $\G$, hence also the corresponding radical. For $\F=\OO_X$ and $\G=\mathcal I$, the ordinary quotient is the structure sheaf of the reduced induced closed subscheme on $V(\mathcal I)$ \cite[Tag~01J3]{Stacks}. The factorization property is a sheaf analogue of \cite[Tag~0356]{Stacks}; in general, the correspondence uses realizable families of fibre subspaces in place of closed subsets (Theorem~\ref{thm:sheaffibre}).
\end{remark}

\begin{proof}[Proof of Theorem~\textup{\ref{thmB}(c)}]
Take $S$ to be the set of closed points of $X$, resp.\ of closed points with real residue field, in Theorem~\ref{thm:sheaffibre}(b). Then $\rad\G=\rad_S\G$, resp.\ $\rrad\G=\rad_S\G$, and by Proposition~\ref{prop:primeclass}(a) the sections of $\rad_S\G$ over $U$ are the $s$ with $s(x)\in\G(x)$ for all $x\in S\cap U$.
\end{proof}

\begin{remark}[Semireality for sheaves]\label{rem:sheafsemireal}
Analogously to Lemma~\ref{lem:modproper}, $\mathcal Q=\F/\G$ is \emph{semireal} (has a nonzero real quasi-coherent quotient) if and only if $\rrad\G\neq\F$, if and only if $\mathcal Q(x)\neq0$ for some $x\in X_{\mathrm{re}}$, if and only if a real prime subsheaf contains $\G$. These equivalences follow from Theorem~\ref{thm:sheaffibre} and $\mathcal Q(x)=\F(x)/\G(x)$. For coherent $\mathcal Q$, this means $\operatorname{Supp}\mathcal Q\cap X_{\mathrm{re}}\neq\varnothing$; on schemes of finite type over a field the point can be chosen closed, and rational over a real closed field. Without coherence, $\widetilde{\R(t)}$ on $\mathbb A^1_\R$ has all closed fibres zero (Example~\ref{ex:Q}(e)). For the stalk criterion see Corollary~\ref{cor:sheafsemireallocal}.
\end{remark}

\begin{remark}[Directional $V$--$J$ correspondence]\label{rem:directionalVJ}
Remark~\ref{sec:directional} applies verbatim to sheaves, replacing $(\p,H)$ by $(x,H)$ with $H\subset\F(x)$ a hyperplane and $P_{\p,H}$ by $\mathcal P_{x,H}$; the intersections defining $J$ are quasi-coherent by Lemma~\ref{lem:kernelint}.
\end{remark}

\section{Localization criteria for modules}\label{sec:loc}

We prove compatibility of semiprimeness, reality and their radicals with localization, then give localization criteria for prime and maximal submodules and for semireality.

Let $S\subseteq R$ be a multiplicative set. For a submodule $N\subseteq M$ we regard $S^{-1}N$ as a submodule of the $S^{-1}R$-module $S^{-1}M$, and $\rad(S^{-1}N)$, $\rrad(S^{-1}N)$ are computed over $S^{-1}R$. Recall that $S^{-1}M/S^{-1}N\cong S^{-1}(M/N)$ and that $S^{-1}Q\otimes_{S^{-1}R}S^{-1}Q\cong S^{-1}(Q\otimes_RQ)$ via $(x/s)\otimes(y/t)\mapsto(x\otimes y)/(st)$ \cite[Corollary~3.4(iii) and Proposition~3.7]{AM}.

\begin{lemma}[Clearing denominators]\label{lem:denom}
Let $Q$ be an $R$-module and $S\subseteq R$ a multiplicative set.
\begin{enumerate}
\item[(a)] If $x\in Q$, $s\in S$ and $x/s\in\Ann_{S^{-1}R}(x/s)\,S^{-1}Q$, then $\sigma x\in\Ann_R(\sigma x)\,Q$ for some $\sigma\in S$.
\item[(b)] If $x_1,\dots,x_k\in Q$, $s\in S$ and $\sum_i(x_i/s)\otimes(x_i/s)=0$ in $S^{-1}Q\otimes_{S^{-1}R}S^{-1}Q$, then $\sum_i\sigma x_i\otimes\sigma x_i=0$ in $Q\otimes_RQ$ for some $\sigma\in S$.
\end{enumerate}
\end{lemma}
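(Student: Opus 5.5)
Both parts reduce to the standard description of when an element, or a finite relation, vanishes after localization. An element $z/t\in S^{-1}Q$ is zero if and only if $uz=0$ for some $u\in S$. The idea is to write the hypothesis as a single equation in $S^{-1}Q$, resp.\ in $S^{-1}(Q\otimes_RQ)$, clear denominators, and then absorb the resulting element of $S$ into $\sigma$.

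\emph{Part (a).} Write the membership as $x/s=\sum_j (a_j/t_j)(y_j/u_j)$, with $a_j\in R$, $y_j\in Q$, $t_j,u_j\in S$ and $(a_j/t_j)(x/s)=0$ for all $j$.
\begin{itemize}
\item Since there are finitely many $j$, choose $v\in S$ with $va_jx=0$ for all $j$.
\item Put everything over a common denominator $T=\prod_j t_ju_j$, so $x/s=y/T$ with $y=\sum_j a_jy_j'$ for suitable $y_j'\in Q$.
\item Equality in $S^{-1}Q$ gives $w\in S$ with $wTx=wsy$.
\end{itemize}
Set $\sigma:=vwT$. Then
\[
\sigma x=vwsy=\sum_j a_j(vws\,y_j').
\]
Each coefficient kills $\sigma x$, since $a_j\sigma x=wT(va_jx)=0$. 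Hence $a_j\in\Ann_R(\sigma x)$, and so $\sigma x\in\Ann_R(\sigma x)Q$. The one point needing care is to multiply by $v$ as well, so that the annihilation holds in $Q$ and not only after localization.

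\emph{Part (b).} Use the isomorphism $S^{-1}Q\otimes_{S^{-1}R}S^{-1}Q\cong S^{-1}(Q\otimes_RQ)$ recalled above, which sends $(x_i/s)\otimes(x_i/s)$ to $(x_i\otimes x_i)/s^2$. The hypothesis then says that
\[
\Bigl(\sum_i x_i\otimes x_i\Bigr)\big/s^2=0\quad\text{in }S^{-1}(Q\otimes_RQ),
\]
so $u\sum_ix_i\otimes x_i=0$ for some $u\in S$. Take $\sigma:=u$. Then
\[
\sum_i\sigma x_i\otimes\sigma x_i=u^2\sum_i x_i\otimes x_i=0.
\]
The only thing to check here is that the canonical isomorphism is the one described, which is standard.

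I expect the main obstacle to be the bookkeeping in (a): the annihilator conditions and the membership equation each involve their own denominators, and one must choose $\sigma$ large enough to clear both simultaneously.
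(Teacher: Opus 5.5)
Your proof is correct and follows the paper's argument essentially step for step: you take one element of $S$ to clear the membership equation over the common denominator and another to turn the annihilation relations into relations in $Q$, multiply them into $\sigma$, and argue (b) exactly as the paper does via $S^{-1}Q\otimes_{S^{-1}R}S^{-1}Q\cong S^{-1}(Q\otimes_RQ)$. The only cosmetic difference is that you keep the coefficients $a_j$, which already kill $\sigma x$, where the paper writes $\sigma x$ with the coefficients $wa_j$.
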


\begin{proof}
(a) We clear denominators twice: first in an expression of $x/s$ as a linear combination whose coefficients annihilate $x/s$, and then in the annihilation relations themselves. By hypothesis,
\[x/s=\textstyle\sum_{j=1}^l(a_j/t_j)(y_j/u_j)\qquad\text{with }a_j\in R,\ y_j\in Q,\ t_j,u_j\in S\text{ and }(a_j/t_j)(x/s)=0.\]

\emph{The linear combination.} Put $t:=\prod_jt_ju_j\in S$ and $z_j:=\bigl(\prod_{i\neq j}t_iu_i\bigr)y_j\in Q$. Then $x/s=(\sum_ja_jz_j)/t$ in $S^{-1}Q$, so there is $v\in S$ with $v(tx-s\sum_ja_jz_j)=0$, i.e.\ $vtx=\sum_ja_j(vsz_j)$.

\emph{The annihilation relations.} $(a_j/t_j)(x/s)=0$ means $a_jx/(t_js)=0$, so $w_ja_jx=0$ for some $w_j\in S$. Put $w:=\prod_jw_j\in S$.

\emph{Conclusion.} Let $\sigma:=wvt\in S$. Multiplying $vtx=\sum_ja_j(vsz_j)$ by $w$ and using the relations $w_ja_jx=0$, we get
\[\sigma x=\textstyle\sum_j(wa_j)(vsz_j),\qquad (wa_j)\,\sigma x=\sigma\bigl(\prod_{i\neq j}w_i\bigr)(w_ja_jx)=0,\]
so $wa_j\in\Ann_R(\sigma x)$ and $\sigma x\in\Ann_R(\sigma x)Q$.

(b) Under the isomorphism above, the hypothesis says $(\sum_ix_i\otimes x_i)/s^2=0$ in $S^{-1}(Q\otimes Q)$, so $u\sum_ix_i\otimes x_i=0$ for some $u\in S$. Then $\sigma:=u$ works, since $\sum_iux_i\otimes ux_i=u\cdot u\sum_ix_i\otimes x_i$.
\end{proof}

\begin{lemma}[Contraction]\label{lem:contr}
Let $S\subseteq R$ be a multiplicative set and $L$ a semiprime (resp.\ real) $S^{-1}R$-submodule of $S^{-1}M$. Then its contraction $L^c:=\{m\in M:\ m/1\in L\}$ is a semiprime (resp.\ real) submodule of $M$.
\end{lemma}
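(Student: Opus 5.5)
The natural route is to reduce to the quotient modules and use the elementwise definitions together with Lemma~\ref{lem:denom}. Put $Q:=M/L^c$ and $Q':=S^{-1}M/L$, which is semiprime (resp.\ real) over $S^{-1}R$ by hypothesis. The map $M\to S^{-1}M\to Q'$, $m\mapsto m/1+L$, has kernel exactly $L^c$, so it induces an $R$-linear injection $\iota:Q\hookrightarrow Q'$. The question is then whether semiprimeness or reality transfers from $Q'$ back to the submodule $\iota(Q)$, viewed as an $R$-module. Proposition~\ref{prop:closprop}(a) does not apply directly, because $Q'$ is only known to be semiprime over $S^{-1}R$, not over $R$.

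\emph{Semiprime case.} Let $q\in Q$ with $q\in\Ann_R(q)Q$, say $q=\sum_j a_jy_j$ with $a_jq=0$. Applying $\iota$ gives $\iota(q)=\sum_j (a_j/1)\iota(y_j)$ in $Q'$. Each $a_j/1$ annihilates $\iota(q)$, since $(a_j/1)\iota(q)=\iota(a_jq)=0$.

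Here we must check that $\iota$ is compatible with the $S^{-1}R$-action in the sense $(r/1)\iota(x)=\iota(rx)$. This holds because the $R$-action on $Q'$ factors through $R\to S^{-1}R$.

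It follows that $\iota(q)\in\Ann_{S^{-1}R}(\iota(q))Q'$, so semiprimeness of $Q'$ gives $\iota(q)=0$. Injectivity of $\iota$ then gives $q=0$. No clearing of denominators is needed here, since the relation is pushed forward rather than pulled back.

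\emph{Real case.} Suppose $\sum_i q_i\otimes q_i=0$ in $Q\otimes_RQ$. Apply the $R$-linear map
\[
Q\otimes_RQ\longrightarrow Q'\otimes_RQ'\longrightarrow Q'\otimes_{S^{-1}R}Q'.
\]
The second arrow is the natural surjection. This yields $\sum_i\iota(q_i)\otimes\iota(q_i)=0$ over $S^{-1}R$, so reality of $Q'$ gives $\iota(q_i)=0$ for all $i$, and hence $q_i=0$.

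The only point needing care, and the main potential obstacle, is that $L^c$ is a submodule and that $\iota$ is well defined and injective. Both follow at once from the definition of $L^c$ as the inverse image of $L$ under $M\to S^{-1}M$. The contraction statement is thus essentially an inverse-image property, in the spirit of Proposition~\ref{prop:closprop}(b), adapted to the base change $R\to S^{-1}R$.

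Lemma~\ref{lem:denom} is not needed for this direction. It is what one would use for the converse direction of Theorem~\ref{thmC}, namely that $S^{-1}N$ inherits semiprimeness or reality from $N$.
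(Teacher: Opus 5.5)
Your proof is correct and is essentially the paper's argument. The paper also embeds $M/L^c$ into $Y=S^{-1}M/L$. It closes the gap you flagged in Proposition~\ref{prop:closprop}(a) by first showing that $Y$ stays semiprime (resp.\ real) as an $R$-module, using exactly your two observations ($\Ann_R(x)Y\subseteq\Ann_{S^{-1}R}(x)Y$ and the natural map $Y\otimes_RY\to Y\otimes_{S^{-1}R}Y$), and then applies closure under submodules, whereas you inline these two steps into a direct elementwise check on $\iota(Q)$.
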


\begin{proof}
Put $Y:=S^{-1}M/L$, a semiprime (resp.\ real) $S^{-1}R$-module. We use two facts. First, $Y$ is also semiprime (resp.\ real) as an $R$-module: for $x\in Y$ we have $\Ann_R(x)Y\subseteq\Ann_{S^{-1}R}(x)Y$, as the image of $\Ann_R(x)$ in $S^{-1}R$ annihilates $x$; and the natural map $Y\otimes_RY\to Y\otimes_{S^{-1}R}Y$ sends $\sum_ix_i\otimes x_i$ to $\sum_ix_i\otimes x_i$. Second, submodules of semiprime (resp.\ real) modules are semiprime (resp.\ real) by Proposition~\ref{prop:closprop}(a). Now $m+L^c\mapsto m/1+L$ embeds $M/L^c$ into $Y$ by definition of $L^c$, so $M/L^c$ is a semiprime (resp.\ real) module.
\end{proof}

\begin{proposition}[Localization]\label{prop:loc}
Let $N$ be a submodule of an arbitrary $R$-module $M$ and $S\subseteq R$ a multiplicative set.
\begin{enumerate}
\item[(a)] If $M$ is a semiprime (resp.\ real) module, then the $S^{-1}R$-module $S^{-1}M$ is semiprime (resp.\ real).
\item[(b)] If $M_\m$ is a semiprime (resp.\ real) $R_\m$-module for every maximal ideal $\m$, then $M$ is a semiprime (resp.\ real) module.
\item[(c)] $S^{-1}(\rad N)=\rad(S^{-1}N)$ and $S^{-1}(\rrad N)=\rrad(S^{-1}N)$.
\end{enumerate}
Consequently, a submodule $N$ of $M$ is semiprime (resp.\ real) if and only if $N_\p$ is a semiprime (resp.\ real) submodule of $M_\p$ for every prime ideal $\p$, if and only if this holds for every maximal ideal.
\end{proposition}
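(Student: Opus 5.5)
I will prove (a) and (b) directly from Lemma~\ref{lem:denom}, then derive (c) from them, Lemma~\ref{lem:contr} and the closure properties, and finally read off the local criteria.

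\emph{Part (a).} Suppose $M$ is semiprime and $x/s\in\Ann_{S^{-1}R}(x/s)\,S^{-1}M$. Lemma~\ref{lem:denom}(a) gives $\sigma\in S$ with $\sigma x\in\Ann_R(\sigma x)M$. Semiprimeness of $M$ then forces $\sigma x=0$, hence $x/s=0$. The real case is the same argument with Lemma~\ref{lem:denom}(b): a vanishing sum $\sum_i(x_i/s)\otimes(x_i/s)$ in $S^{-1}M\otimes_{S^{-1}R}S^{-1}M$ yields $\sum_i\sigma x_i\otimes\sigma x_i=0$ in $M\otimes_R M$. Reality gives $\sigma x_i=0$, so all $x_i/s=0$.

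\emph{Part (b).} Let $x\in\Ann_R(x)M$ with $x\neq0$. Choose a maximal ideal $\m$ containing $\Ann_R(x)$, so that $x/1\neq0$ in $M_\m$. The image of $\Ann_R(x)$ in $R_\m$ annihilates $x/1$, so $x/1\in\Ann_{R_\m}(x/1)M_\m$, contradicting semiprimeness of $M_\m$. For reality, suppose $\sum x_i\otimes x_i=0$ in $M\otimes_RM$. Localizing and using $(M\otimes M)_\m\cong M_\m\otimes M_\m$ shows $\sum(x_i/1)\otimes(x_i/1)=0$ for every $\m$. Reality of each $M_\m$ gives $x_i/1=0$ for all $\m$, so $x_i=0$.

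\emph{Part (c).} For $\subseteq$: the contraction of $\rad(S^{-1}N)$ is semiprime by Lemma~\ref{lem:contr} and contains $N$. Hence it contains $\rad N$, so $S^{-1}\rad N\subseteq\rad(S^{-1}N)$. For $\supseteq$: the quotient $S^{-1}M/S^{-1}\rad N\cong S^{-1}(M/\rad N)$ is semiprime by (a). Thus $S^{-1}\rad N$ is a semiprime submodule containing $S^{-1}N$, so it contains $\rad(S^{-1}N)$. The real case is identical.

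\emph{Consequences.} Apply (a) to $M/N$, using $(M/N)_\p\cong M_\p/N_\p$, to pass from $N$ to every $N_\p$; apply (b) for the converse from maximal ideals. The main subtlety is bookkeeping rather than any real obstacle: the identifications of quotients and tensor products with localizations, and the fact that each $M_\m$ is an $R_\m$-module, have to be tracked carefully.
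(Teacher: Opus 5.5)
Your proposal is correct and follows the paper's proof essentially step by step: Lemma~\ref{lem:denom} for (a), contraction via Lemma~\ref{lem:contr} plus (a) for the two inclusions in (c), and (a),(b) applied to $M/N$ for the local criteria. The only cosmetic difference is in the semiprime case of (b), where you choose a maximal ideal containing $\Ann_R(x)$ to get $x/1\neq0$, instead of invoking the principle that an element vanishing in every $M_\m$ is zero; this is just that principle's proof unpacked.
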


\begin{proof}
(a) Suppose $M$ is a semiprime module and $x/s\in\Ann_{S^{-1}R}(x/s)S^{-1}M$. Lemma~\ref{lem:denom}(a) gives $\sigma\in S$ with $\sigma x\in\Ann_R(\sigma x)M$, so $\sigma x=0$ and hence $x/s=0$. Suppose $M$ is a real module. Finitely many elements of $S^{-1}M$ can be written with a common denominator, say as $x_i/s$. If $\sum_i(x_i/s)\otimes(x_i/s)=0$, Lemma~\ref{lem:denom}(b) gives $\sigma\in S$ with $\sum_i\sigma x_i\otimes\sigma x_i=0$, so $\sigma x_i=0$ and $x_i/s=0$ for all $i$.

(b) We use twice that if $z\in M$ and $z/1=0$ in $M_\m$ for every maximal ideal $\m$, then $z=0$ \cite[Lemma~2.8(a)]{Eis}. Suppose all $M_\m$ are semiprime and let $q\in\Ann_R(q)M$, say $q=\sum_ja_jy_j$ with $a_jq=0$. For every maximal $\m$ we have $q/1=\sum_j(a_j/1)(y_j/1)$ with $a_j/1\in\Ann_{R_\m}(q/1)$, so $q/1=0$ in $M_\m$. Hence $q=0$. Suppose all $M_\m$ are real and $\sum_iq_i\otimes q_i=0$ in $M\otimes M$. Its image $\sum_i(q_i/1)\otimes(q_i/1)$ in $M_\m\otimes_{R_\m}M_\m\cong(M\otimes M)_\m$ vanishes, so $q_i/1=0$ for all $\m$, and $q_i=0$.

(c) Let $L:=\rad(S^{-1}N)$. By Lemma~\ref{lem:contr}, $L^c$ is a semiprime submodule of $M$ containing $N$, so $\rad N\subseteq L^c$ and hence $S^{-1}(\rad N)\subseteq S^{-1}(L^c)\subseteq L$. Conversely, $S^{-1}M/S^{-1}(\rad N)\cong S^{-1}(M/\rad N)$ is semiprime by (a), as $M/\rad N$ is semiprime, so $S^{-1}(\rad N)$ is a semiprime submodule containing $S^{-1}N$, and therefore $L\subseteq S^{-1}(\rad N)$. The same argument applies to $\rrad$.

The last statement follows by applying (a) and (b) to $M/N$, as $(M/N)_\p=M_\p/N_\p$.
\end{proof}

For prime and maximal submodules, localization alone does not suffice; the support must also be controlled.

\begin{proposition}[Localization criteria for prime and maximal submodules]\label{prop:modlocalclass}
Let $P\subseteq M$, put $Q=M/P$, and write $\operatorname{Supp}Q=\{\p\in\Spec R:Q_\p\neq0\}$.
\begin{enumerate}
\item[(a)] $P$ is prime if and only if $\operatorname{Supp}Q$ has a smallest element $\p$ under inclusion and $P_\q$ is prime in $M_\q$ for every $\q\in\operatorname{Supp}Q$. In this case $\p=(P:M)$ and $\operatorname{Supp}Q=V(\p)$. If $Q$ is finitely generated, the support condition is equivalent to being nonempty and irreducible.
\item[(b)] $P$ is maximal if and only if $\operatorname{Supp}Q=\{\m\}$ for one point $\m$ and $P_\m$ is maximal in $M_\m$. The point $\m$ is necessarily maximal.
\end{enumerate}
\end{proposition}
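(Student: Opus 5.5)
For (a), the forward direction comes first. Let $P$ be prime and put $\p=(P:M)$. By Lemma~\ref{lem:primequot}, $Q$ is a nonzero torsion-free $R/\p$-module, so it embeds into $Q_\p$. Hence $Q_\q\neq0$ exactly when $\q\supseteq\p$: if $\q\not\supseteq\p$, an element $s\in\p\setminus\q$ kills $Q$, so $Q_\q=0$; if $\q\supseteq\p$, then $Q_\p$ is a localization of $Q_\q$, and $Q_\p\neq0$ forces $Q_\q\neq0$. Thus $\operatorname{Supp}Q=V(\p)$ with smallest element $\p$. For $\q\in V(\p)$, the module $Q_\q$ is nonzero with annihilator $\p R_\q$. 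It remains torsion-free over $(R/\p)_\q$, because $Q\to Q_\p$ is injective and $Q_\q$ embeds into $Q_\p$ as well. Lemma~\ref{lem:primequot}, applied over $R_\q$, then shows that $P_\q$ is prime.

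For the converse, let $\p$ be the smallest element of $\operatorname{Supp}Q$ and assume every $P_\q$ with $\q\in\operatorname{Supp}Q$ is prime. I would verify the element criterion. Suppose $rm\in P$ and $m\notin P$; we want $rQ=0$. Since $\bar m\neq0$, some $\q$ has $\bar m/1\neq0$ in $Q_\q$, so $\q\in\operatorname{Supp}Q$. Primeness of $P_\q$ gives $rQ_\q=0$. Next, pass from $\q$ down to $\p$: since $\p\subseteq\q$, $Q_\p$ is a localization of $Q_\q$, so $rQ_\p=0$. The key claim is that $Q\to Q_\p$ is injective. Granting it, $rQ=0$. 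To prove the claim, suppose $x\in Q$ is nonzero with $x/1=0$ in $Q_\p$. Pick $\q'$ in the support of $Rx$. Then $\q'\in\operatorname{Supp}Q$, so $\q'\supseteq\p$. In $Q_{\q'}$, which has prime annihilator $\p'$, the element $x/1$ is nonzero; its annihilator equals the annihilator of the whole module, and this annihilator must be $\p R_{\q'}$ because $\p$ is the minimal support point. Some $s\notin\p$ kills $x$, so $s/1$ kills $x/1$ in $Q_{\q'}$. Then $s\in\p R_{\q'}\cap R=\p$ (the contraction of the prime annihilator), a contradiction. Properness of $P$ follows from $\operatorname{Supp}Q\neq\varnothing$. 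I expect this injectivity step to be the main obstacle. One must verify carefully that the annihilator of $Q_{\q'}$ contracts to $\p$: its support over $R_{\q'}$ is $V(\operatorname{ann})$, localized at $\q'$, and that support has minimal point $\p$, so the prime annihilator contracts exactly to $\p$. For finitely generated $Q$, $\operatorname{Supp}Q=V(\Ann Q)$, which has a smallest element iff $\sqrt{\Ann Q}$ is prime, i.e.\ iff it is nonempty and irreducible.

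For (b), if $P$ is maximal then $Q\cong R/\m$ by Lemma~\ref{lem:simple}. Its support is $\{\m\}$, and $Q_\m=Q$ is simple, so $P_\m$ is maximal. Conversely, suppose $\operatorname{Supp}Q=\{\m\}$. A one-point support that is closed under specialization (support contains $V(\Ann(x))$ for each $x$) forces $\m$ to be maximal. Indeed, each $V(\Ann x)$ for nonzero $x$ equals $\{\m\}$, so $\m$ is maximal. Moreover, $Q\to Q_\m$ is injective, since an element vanishing in $Q_\m$ vanishes at every localization. A nonzero submodule $Q'\subseteq Q$ has nonzero $Q'_\m$, which must equal $Q_\m$ by simplicity. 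Therefore $(Q/Q')_\m=0$, and since $Q/Q'$ has support inside $\{\m\}$, we get $Q/Q'=0$. Hence $Q$ is simple, and $P$ is maximal.
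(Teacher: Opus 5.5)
Your proof is correct. Part (a) follows the paper's outline: torsion-freeness over $R/\p$ for the forward direction, and for the converse the injectivity of $Q\to Q_\p$ followed by the element criterion.

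The one place you diverge in (a) is the injectivity step. There you identify $\Ann_{R_{\q'}}(Q_{\q'})=\p R_{\q'}$ by applying the forward direction over $R_{\q'}$ and comparing smallest support points. This works, but the paper bypasses the annihilator computation you flagged as the main obstacle. Once $s/1$ kills the nonzero element $x/1$ of the prime module $Q_{\q'}$, it kills all of $Q_{\q'}$, hence also its further localization $Q_\p$. There $s$ is a unit, so $Q_\p=0$, contradicting $\p\in\operatorname{Supp}Q$.

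For the converse of (b), your route is genuinely different. The paper reduces to (a): $Q_\m$ is simple, hence prime, so (a) gives $\Ann_R Q=\m$ and $\operatorname{Supp}Q=V(\m)=\{\m\}$. This forces $\m$ to be maximal, and then $Q$ is an $R/\m$-vector space with $Q\cong Q_\m$.

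You argue directly instead. For $x\neq0$, the inclusion $V(\Ann_R x)=\operatorname{Supp}(Rx)\subseteq\{\m\}$ forces $\m$ to be maximal. Next, $Q\hookrightarrow Q_\m$ because all other localizations vanish. Finally, for $0\neq Q'\subseteq Q$, simplicity of $Q_\m$ gives $(Q/Q')_\m=0$, so $Q/Q'$ has empty support and vanishes.

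Your version of (b) is self-contained and independent of (a). The paper's is shorter once (a) is available, and it yields $\m Q=0$, hence $Q\cong Q_\m$, immediately.
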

\begin{proof}
Recall that a nonzero module $Q$ is prime precisely when $ax=0$ implies $x=0$ or $aQ=0$.

(a) Suppose $Q$ is prime and put $\p=\Ann_R Q$. This ideal is prime: if $abQ=0$ but $bQ\neq0$, apply the criterion to a nonzero element $bx$ to get $aQ=0$. The same criterion says that $Q$ is torsion-free over $R/\p$. If $\q\not\supseteq\p$, an element of $\p\setminus\q$ kills $Q$ and becomes invertible in $R_\q$, so $Q_\q=0$. If $\q\supseteq\p$, every element outside $\q$ acts injectively on $Q$, so $Q\to Q_\q$ is injective. Moreover, $Q_\q$ is torsion-free over $R_\q/\p R_\q$: if $(a/s)(x/t)=0$ with $a\notin\p$, then $uax=0$ for some $u\notin\q$, and $ua\notin\p$ forces $x=0$. Thus $Q_\q$ is prime and $\operatorname{Supp}Q=V(\p)$.

Conversely, let $\p$ be the smallest support prime. We first show that $Q\to Q_\p$ is injective. Suppose $0\neq x\in Q$ maps to zero. Choose a maximal ideal $\q$ containing $\Ann_R(x)$; then $x/1\neq0$ in $Q_\q$, so $\q$ belongs to the support and $\p\subseteq\q$. Some $s\notin\p$ satisfies $sx=0$. Primeness of $Q_\q$ gives $sQ_\q=0$. Since $\p\subseteq\q$, inverting all elements outside $\p$ in $Q_\q$ gives $Q_\p$; but $s$ becomes invertible there, forcing $Q_\p=0$, a contradiction.

Now let $ax=0$ with $x\neq0$. Injectivity gives $x/1\neq0$ in the prime module $Q_\p$, so $(a/1)Q_\p=0$. Injectivity then gives $aQ=0$. Hence $Q$ is prime. The forward implication identifies its annihilator and support as claimed.

If $Q$ is finitely generated, its support equals $V(\Ann_R Q)$. Indeed, $Q_\q=0$ means that each of finitely many generators is killed by an element outside $\q$; their product kills $Q$. This closed set has a smallest prime exactly when it is nonempty and irreducible.

(b) If $P$ is maximal, $Q$ is isomorphic to $R/\m$ for a maximal ideal $\m$, giving the stated support and simple localization. Conversely, $\m$ is the smallest support prime and $Q_\m$ is simple, hence prime. By (a), $\m=\Ann_R Q$ and $V(\m)=\{\m\}$, so $\m$ is maximal. Since $\m Q=0$, every element outside $\m$ already acts invertibly on $Q$; thus $Q\to Q_\m$ is an isomorphism. Hence $Q$ is simple and $P$ maximal.
\end{proof}

Semireality has the following localization criterion for arbitrary modules.

\begin{proposition}\label{prop:locsemireal}
Let $M$ be an arbitrary $R$-module and $S\subseteq R$ a multiplicative set.
\begin{enumerate}
\item[(a)] $S^{-1}M$ is a semireal $S^{-1}R$-module if and only if $M$ has a real prime submodule $P$ with $(P:M)\cap S=\varnothing$.
\item[(b)] $M$ is a semireal module if and only if $M_\p$ is semireal for some real prime ideal $\p$, if and only if $M_\m$ is semireal for some maximal ideal $\m$.
\end{enumerate}
\end{proposition}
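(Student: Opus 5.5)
For (a), I will translate semireality of $S^{-1}M$ into the existence of a real prime submodule of $S^{-1}M$, and then use contraction and extension of prime submodules. By Lemma~\ref{lem:modproper} (equivalence (i)$\Leftrightarrow$(iv), applied over $S^{-1}R$ with $N=0$), $S^{-1}M$ is semireal if and only if it has a real prime $S^{-1}R$-submodule $L$.

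Given such an $L$, its contraction $P:=L^c$ is real by Lemma~\ref{lem:contr}. It is prime because $M/P$ embeds in $S^{-1}M/L$, a prime module over $S^{-1}R$. One checks directly that if $rm\in P$ and $rM\not\subseteq P$, then $m\in P$. Here one uses that $(r/1)(S^{-1}M)\not\subseteq L$ whenever $rM\not\subseteq P$, since $L$ is an $S^{-1}R$-submodule. Moreover $P\neq M$: otherwise all $m/1\in L$, so $L=S^{-1}M$. Finally, if $s\in(P:M)\cap S$, then $sM\subseteq P$, so $(s/1)S^{-1}M\subseteq L$. Since $s/1$ is a unit, this gives $L=S^{-1}M$, a contradiction.

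Conversely, let $P$ be real prime with $\p:=(P:M)$ disjoint from $S$. Then $S^{-1}(M/P)=S^{-1}M/S^{-1}P$ is real by Proposition~\ref{prop:loc}(a). It is nonzero: $M/P$ is torsion-free over $R/\p$ by Lemma~\ref{lem:primequot}, and elements of $S$ lie outside $\p$, so they act injectively. Hence $M/P\hookrightarrow S^{-1}(M/P)$. Thus $S^{-1}M$ has a nonzero real quotient, i.e.\ it is semireal.

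For (b), first assume $M$ is semireal. Then Lemma~\ref{lem:modproper} gives a real prime $P$, and $\p=(P:M)$ is a real prime ideal by the remarks in Section~\ref{sec:basic}. Taking $S=R\setminus\p$ in (a) shows $M_\p$ is semireal. Next, if $M_\p$ is semireal for a real prime $\p$, choose any maximal $\m\supseteq\p$. Since $R\setminus\m\subseteq R\setminus\p$, (a) applied with $S=R\setminus\p$ gives a real prime $P$ with $(P:M)\subseteq\p\subseteq\m$. Then $(P:M)\cap(R\setminus\m)=\varnothing$, so (a) applied with $S=R\setminus\m$ gives that $M_\m$ is semireal. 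Finally, if $M_\m$ is semireal, (a) yields a real prime submodule of $M$, so $M$ is semireal by Lemma~\ref{lem:modproper}.

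The main obstacle is the contraction step in (a): verifying that $L^c$ is prime rather than merely real, and that $(L^c:M)$ avoids $S$. I would handle it through the elementwise prime criterion as sketched above. Alternatively, one can use Theorem~\ref{thm:modclass}: write $L=P_{\q,W}$ over $S^{-1}R$, with $\q$ a prime of $S^{-1}R$ contracting to a prime of $R$ disjoint from $S$ and having the same residue field.
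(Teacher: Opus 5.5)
Your proof is correct and follows essentially the paper's route. For the forward direction of (a), you contract a real prime submodule of $S^{-1}M$ and check properness, primeness and $(P:M)\cap S=\varnothing$, using that elements of $S$ become units; you then deduce (b) from (a) with $S=R\setminus\p$ and $S=R\setminus\m$. The one small difference is in the converse of (a): you combine Proposition~\ref{prop:loc}(a) with injectivity of $M/P\to S^{-1}(M/P)$, whereas the paper applies Lemma~\ref{lem:primequot} over $S^{-1}R$ to show that $S^{-1}P$ is itself real prime; both arguments work.
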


\begin{proof}
(a) Suppose $P$ is real prime and $(P:M)\cap S=\varnothing$. Put $\p=(P:M)$. The nonzero quotient $M/P$ is torsion-free over $R/\p$, so its localization is nonzero and torsion-free over $S^{-1}(R/\p)$, with annihilator $S^{-1}\p$. Its fraction field is still the real field $\kappa(\p)$. Lemma~\ref{lem:primequot}, applied over $S^{-1}R$, makes $S^{-1}P$ real prime. Thus $S^{-1}M$ is semireal.

Conversely, if $S^{-1}M$ is semireal, choose a real prime submodule $P'$ by \cite[Theorem~B]{Cdec}. Its contraction $P$ is proper, and $M/P$ embeds into the prime module $S^{-1}M/P'$, viewed as an $R$-module. Since nonzero submodules of prime modules are prime, $P$ is prime. Moreover, $(P:M)\cap S=\varnothing$, since the elements of $S$ act invertibly on the nonzero prime quotient. Lemma~\ref{lem:contr} makes $P$ real, proving (a).

(b) If $M$ is semireal, choose a real prime $P$ by \cite[Theorem~B]{Cdec} and put $\p=(P:M)$. For each prime $\q\supseteq\p$, part (a) makes $M_\q$ semireal. Taking $\q=\p$ or a maximal ideal containing $\p$ gives the asserted localizations. Conversely, part (a) shows that semireality of any $M_\q$ implies semireality of $M$.
\end{proof}

The maximal ideal in part (b) need not be real: for $M=\Z$ and $\m=p\Z$, $M_\m=\Z_{(p)}$ is real, although $p\Z$ is not a real ideal.

\section{Stalk criteria for sheaves}\label{sec:loc-sheaves}

Combining the affine criteria of Proposition~\ref{prop:sheafaffine} with module localization gives stalk criteria for semiprimeness, reality and semireality. For prime and maximal subsheaves, the stalk conditions must also specify the support.

\begin{proposition}[Stalk criteria]\label{prop:stalk}
Let $X$ be Noetherian and $\G\subseteq\F$ quasi-coherent.
$\G$ is a semiprime (resp.\ real) subsheaf of $\F$ if and only if $\G_x$ is a semiprime (resp.\ real) submodule of $\F_x$ for every $x\in X$, if and only if this holds for every closed point $x$.
Moreover, for every $x\in X$,
\[
(\rad\G)_x=\rad(\G_x),\qquad
(\rrad\G)_x=\rrad(\G_x),
\]
where the right-hand radicals are taken in $\F_x$ over $\OO_{X,x}$.
\end{proposition}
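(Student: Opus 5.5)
The plan is to reduce everything to an affine chart and then apply the module localization results of Proposition~\ref{prop:loc}. Fix $x\in X$ and an affine open $U=\Spec A$ containing $x$, corresponding to $\p\subseteq A$. Put $M=\Gamma(U,\F)$ and $N=\Gamma(U,\G)$. By the affine dictionary, $\F_x=M_\p$, $\G_x=N_\p$ and $\OO_{X,x}=A_\p$. The localization $S^{-1}$ with $S=A\setminus\p$ is exactly passage to the stalk.

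\emph{Radical formulas.} By \eqref{eq:sheafaffineradicals} in Theorem~\ref{thm:sheaffibre}(b), we have $(\rad\G)|_U=\widetilde{\rad N}$ and $(\rrad\G)|_U=\widetilde{\rrad N}$. Taking stalks gives $(\rad\G)_x=(\rad N)_\p$. Proposition~\ref{prop:loc}(c) with $S=A\setminus\p$ then gives $(\rad N)_\p=\rad(N_\p)=\rad(\G_x)$. The real radical is handled identically. This step is essentially bookkeeping; the only care needed is that \eqref{eq:sheafaffineradicals} was proved for arbitrary quasi-coherent $\G\subseteq\F$ on a Noetherian scheme, which it was.

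\emph{Semiprimeness and reality.} By Proposition~\ref{prop:sheafaffine}, $\G$ is semiprime (resp.\ real) if and only if $N=\Gamma(U,\G)$ is semiprime (resp.\ real) in $M=\Gamma(U,\F)$ for every affine open $U$.

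\begin{itemize}
\item If $\G$ is semiprime, then for each $x$ choose an affine $U\ni x$. The final statement of Proposition~\ref{prop:loc} gives that $N_\p=\G_x$ is semiprime in $M_\p=\F_x$. So the condition holds at all points, and in particular at closed points.
\item Conversely, suppose $\G_x$ is semiprime in $\F_x$ for every closed point $x$. Fix an affine $U=\Spec A$; by Proposition~\ref{prop:loc}, it suffices to check $N_\m$ for every maximal ideal $\m$ of $A$.
\end{itemize}

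The main obstacle is the converse direction: a point closed in $U$ need not be closed in $X$, as the paper warns with the DVR example. I would bypass this as follows. Since $X$ is Noetherian, hence quasi-compact, every point $y\in X$ specializes to a closed point $x$ of $X$. If $y\in U$, then $y$ generizes $x$, and $x$ may lie outside $U$. So instead choose an affine $V\ni x$. Then $y\in V$ as well, since open sets are stable under generization. Hence $\G_y$ is a localization of $\G_x$, namely $(N_V)_{\q}$ is a further localization of $(N_V)_{\p_x}$. Proposition~\ref{prop:loc}(a), applied over $A_{\p_x}$ to the semiprime quotient $\F_x/\G_x$, yields that $\G_y$ is semiprime in $\F_y$.

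Thus closed-point stalks control all stalks. Then on each affine $U$, every maximal ideal $\m$ of $A$ is a point $y\in U$ with $N_\m=\G_y$ semiprime. Proposition~\ref{prop:loc} makes $N$ semiprime, and Proposition~\ref{prop:sheafaffine} concludes. The real case runs verbatim, using the real halves of Propositions~\ref{prop:loc} and~\ref{prop:sheafaffine}.
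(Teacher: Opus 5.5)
Your proposal is correct and follows the paper's proof essentially step for step. It reduces to affine charts via Proposition~\ref{prop:sheafaffine} and the local--global statement of Proposition~\ref{prop:loc}, passes from closed-point stalks to all stalks by specializing to a closed point, taking an affine neighbourhood of it and applying Proposition~\ref{prop:loc}(a), and obtains the stalk radical identities from \eqref{eq:sheafaffineradicals} combined with Proposition~\ref{prop:loc}(c).
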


\begin{proof}
On an affine open $U=\Spec A$ with $M=\Gamma(U,\F)$ and $N=\Gamma(U,\G)$, the stalks at points of $U$ are the localizations $N_\p\subseteq M_\p$. By the last statement of Proposition~\ref{prop:loc}, $N$ is a semiprime (resp.\ real) submodule of $M$ if and only if all $N_\p$ are. With Propositions~\ref{prop:sheafaffine}(a) and~\ref{prop:sheafaffine}(b) this proves the first equivalence. For the second, let $y\in X$. As $X$ is quasi-compact, the closure of $y$ contains a closed point $x$. Every affine open $U\ni x$ also contains $y$, and $(\F/\G)_y$ is a localization of $(\F/\G)_x$; Proposition~\ref{prop:loc}(a) shows that it is semiprime (resp.\ real) if $(\F/\G)_x$ is.

Finally, choose an affine neighbourhood $U=\Spec A$ of $x$, with $M=\Gamma(U,\F)$ and $N=\Gamma(U,\G)$. Formula~\eqref{eq:sheafaffineradicals} identifies the affine restrictions of the sheaf radicals with the module radicals, and Proposition~\ref{prop:loc}(c) identifies their stalks with $\rad(N_\p)$ and $\rrad(N_\p)$. This proves the displayed identities and, together with Proposition~\ref{prop:loc}, Theorem~\ref{thmC}.
\end{proof}

The following proposition gives the sheaf analogue of Proposition~\ref{prop:modlocalclass}(a) for coherent quotients; Corollary~\ref{cor:maxstalk} corresponds to part (b) without a coherence assumption.

\begin{proposition}[Geometric and stalk criteria for primeness]\label{prop:primestalk}
Let $X$ be Noetherian and $\mathcal P\subseteq\F$ quasi-coherent with coherent quotient $\mathcal Q=\F/\mathcal P$. The following are equivalent:
\begin{enumerate}
\item[(a)] $\mathcal P$ is prime.
\item[(b)] $\Ann\mathcal Q=\mathcal I_Z$ for an integral closed subscheme $Z\subseteq X$, and $\mathcal Q$ is torsion-free as an $\OO_Z$-module (on affine opens of $Z$, equivalently at its stalks).
\item[(c)] $\operatorname{Supp}\mathcal Q$ is nonempty and irreducible, and $\mathcal P_x$ is prime in $\F_x$ whenever $\mathcal Q_x\neq0$.
\end{enumerate}
\end{proposition}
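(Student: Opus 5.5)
The plan is to prove (a)$\Rightarrow$(b)$\Rightarrow$(c)$\Rightarrow$(a), working on an affine cover and reducing everything to the module statements already established. Coherence of $\mathcal Q$ is used in three ways. First, $\mathcal I(\mathcal Q)=\Ann\mathcal Q$ is a coherent ideal with $\Gamma(U,\Ann\mathcal Q)=\Ann_A\Gamma(U,\mathcal Q)$ on affine $U=\Spec A$. Second, $\operatorname{Supp}\mathcal Q=V(\Ann\mathcal Q)$ is closed. Third, on each affine chart Proposition~\ref{prop:modlocalclass}(a) applies in its finitely generated form.

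For (a)$\Rightarrow$(b), I will use Lemma~\ref{lem:primesections}. It gives a point $\eta$ such that on each affine $U\ni\eta$ the sections $\Gamma(U,\mathcal Q)$ form a prime $A$-module with annihilator $\p_\eta$, while on affines missing $\eta$ they vanish. Let $Z$ be the reduced closure of $\eta$; this is an integral closed subscheme. On $U\ni\eta$ we have $\Gamma(U,\Ann\mathcal Q)=\p_\eta=\Gamma(U,\mathcal I_Z)$. On an affine $U\not\ni\eta$, $U$ misses $Z$ (every open meeting $Z$ contains $\eta$), so both ideals equal $\OO_U$. Hence $\Ann\mathcal Q=\mathcal I_Z$. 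Torsion-freeness over $\OO_Z$ on affine opens of $Z$ is exactly the torsion-freeness over $A/\p_\eta$ in Lemma~\ref{lem:primequot}. The stalk version follows because localization preserves torsion-freeness, as in the proof of Proposition~\ref{prop:modlocalclass}(a), and conversely a finitely generated module whose localizations at all primes of $A/\p_\eta$ are torsion-free is itself torsion-free (test $ax=0$ locally).

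For (b)$\Rightarrow$(c), the support is $V(\mathcal I_Z)=Z$, which is nonempty and irreducible. At $x\in Z$, the stalk $\mathcal Q_x$ is a nonzero torsion-free module over the domain $\OO_{Z,x}=\OO_{X,x}/\p_\eta\OO_{X,x}$, and its annihilator is exactly that prime, since a nonzero torsion-free module over a domain is faithful. Lemma~\ref{lem:primequot} over $\OO_{X,x}$ then makes $\mathcal P_x$ prime.

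For (c)$\Rightarrow$(a), take an affine $U=\Spec A$ meeting the support and put $Q=\Gamma(U,\mathcal Q)$, a finitely generated $A$-module. Its module support is $\operatorname{Supp}\mathcal Q\cap U$, which is a nonempty open subset of an irreducible space and hence irreducible, and its localizations at support primes are the stalks, which are prime by hypothesis. Proposition~\ref{prop:modlocalclass}(a) therefore makes $\Gamma(U,\mathcal P)$ prime, with annihilator the generic point $\eta$ of $\operatorname{Supp}\mathcal Q$ (the same $\eta$ on every such $U$); on affines missing the support, $\mathcal P$ equals $\F$. Now let $W=\mathcal P(\eta)\subseteq\F(\eta)$, which is proper because $\mathcal Q_\eta\neq0$. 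By Theorem~\ref{thm:modclass}(a), on each affine containing $\eta$ the sections of $\mathcal P$ are $P_{\p_\eta,W}$, and these match the sections of $\mathcal P_{\eta,W}$ by Proposition~\ref{prop:primeclass}(a). On affines meeting neither $\eta$ nor the support, both equal the sections of $\F$. Since an affine meeting the support contains $\eta$, this covers all cases, so $\mathcal P=\mathcal P_{\eta,W}$, and Proposition~\ref{prop:primeclass}(b) shows that $\mathcal P$ is prime.

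The main obstacle is the bookkeeping in (c)$\Rightarrow$(a): checking that the affine prime data glue to a single point kernel. The key fact is that every open meeting an irreducible closed set contains its generic point, and I will verify this carefully.
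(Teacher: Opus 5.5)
Your proof is correct, but it closes the cycle differently from the paper. Your (a)$\Rightarrow$(b) and (b)$\Rightarrow$(c) match the paper's arguments: Lemma~\ref{lem:primesections} for the first, and $\operatorname{Supp}\mathcal Q=V(\mathcal I_Z)$ with Lemma~\ref{lem:primequot} at stalks for the second.

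The paper makes (b) the hub. It proves (b)$\Rightarrow$(a) directly from the intrinsic criterion \eqref{eq:sheafcrit}: every nonzero quasi-coherent $\mathcal L\subseteq\mathcal Q$ is coherent and torsion-free over $\OO_Z$, so it has nonzero generic stalk and is faithful over $\OO_Z$. Hence $\mathcal I(\mathcal L)=\mathcal I_Z=\mathcal I(\mathcal Q)$. It proves (c)$\Rightarrow$(b) stalkwise: $\Ann\mathcal Q$ has prime stalks on the support (Lemma~\ref{lem:primequot}), so it is radical with nonempty irreducible zero locus, and $V(\Ann\mathcal Q)$ is integral.

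You instead prove (c)$\Rightarrow$(a). You apply Proposition~\ref{prop:modlocalclass}(a) on each affine chart, then glue the resulting prime submodules into the single point kernel $\mathcal P_{\eta,\mathcal P(\eta)}$ via Theorem~\ref{thm:modclass}(a) and Proposition~\ref{prop:primeclass}.

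What each route buys:
\begin{itemize}
\item Yours needs three implications instead of four, and it yields the attached pair $(\eta,\mathcal P(\eta))$ explicitly.
\item It does lean on the full point-kernel classification, and the primeness of $\mathcal P_{x,W}$ in Proposition~\ref{prop:primeclass}(b) is proved by essentially the paper's (b)$\Rightarrow$(a) computation of $\mathcal I(\mathcal L)$. It also leans on the injectivity argument inside Proposition~\ref{prop:modlocalclass}.
\item The paper's route needs only the categorical criterion and local annihilator calculations.
\end{itemize}

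Two small points in your gluing step should be made explicit. First, the module fibre image $\Gamma(U,\mathcal P)(\p_\eta)$ equals $\mathcal P(\eta)$ by Lemma~\ref{lem:fibreid}(b); this is what makes $W$ independent of $U$. Second, $W$ is proper because $\mathcal Q(\eta)=\F(\eta)/\mathcal P(\eta)$ is nonzero. That follows from Nakayama, using coherence, or directly from Theorem~\ref{thm:modclass}(a); $\mathcal Q_\eta\neq0$ alone does not suffice without one of these.
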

\begin{proof}
Coherence gives
\[
(\Ann\mathcal Q)_x=\Ann_{\OO_{X,x}}(\mathcal Q_x),
\qquad \operatorname{Supp}\mathcal Q=V(\Ann\mathcal Q).
\]
These follow on affine opens from finite generation and localization of annihilators.

(a)$\Rightarrow$(b). Let $\eta$ be the point attached to $\mathcal Q$ by Lemma~\ref{lem:primesections}, and give $Z=\overline{\{\eta\}}$ its reduced structure. On every affine $U=\Spec A$ meeting $Z$, the module $\Gamma(U,\mathcal Q)$ is prime with annihilator $\p_\eta$, hence torsion-free over $A/\p_\eta$ by Lemma~\ref{lem:primequot}; on affines disjoint from $Z$ it is zero. These descriptions give (b).

(b)$\Rightarrow$(a). Any nonzero quasi-coherent $\mathcal L\subseteq\mathcal Q$ is coherent and torsion-free over $\OO_Z$. Its generic stalk is nonzero, so its sections on every nonempty affine open of $Z$ are nonzero and torsion-free over the coordinate domain. Thus $\Ann_{\OO_Z}\mathcal L=0$, giving $\mathcal I(\mathcal L)=\Ann_{\OO_X}\mathcal L=\mathcal I_Z=\mathcal I(\mathcal Q)$. The criterion \eqref{eq:sheafcrit} proves primeness.

(b)$\Rightarrow$(c). The support is $Z$, hence nonempty and irreducible. At each $x\in Z$, the stalk $\mathcal Q_x$ is nonzero and torsion-free over the domain $\OO_{Z,x}=\OO_{X,x}/\mathcal I_{Z,x}$, with annihilator $\mathcal I_{Z,x}$. Lemma~\ref{lem:primequot} gives primeness of $\mathcal P_x$.

(c)$\Rightarrow$(b). Put $\mathcal I=\Ann\mathcal Q$. On the support, $\mathcal I_x$ is prime by Lemma~\ref{lem:primequot}; elsewhere it is $\OO_{X,x}$. Thus $\mathcal I$ is radical, and the closed subscheme $Z=V(\mathcal I)$ is reduced with nonempty irreducible underlying space, hence integral. The same lemma makes every stalk of $\mathcal Q$ on $Z$ torsion-free over $\OO_{Z,x}$, proving (b).
\end{proof}

\begin{remark}\label{rem:primesheaf}
Semiprimeness is local (Proposition~\ref{prop:stalk}), but primeness is not: for coherent quotients it additionally requires nonempty irreducible support (Proposition~\ref{prop:primestalk}(c)).

Primeness cannot be checked on stalks alone, or separately on an affine open cover. For $X=\Spec k[t]$, $\F=\OO_X$ and $\mathcal P=\widetilde{(t(t-1))}$, the stalks at $(t)$ and $(t-1)$ are maximal ideals and all other stalks equal $\F_x$. Nevertheless, the quotient corresponds to $k[t]/(t(t-1))\cong k\oplus k$, which is semiprime but not prime. Both restrictions to the affine cover $D(t)\cup D(t-1)$ are prime; the missing condition is irreducibility of the support. The stalk condition in Proposition~\ref{prop:primestalk}(c) is not automatic: for $X=\Spec k[u,v]/(uv)$, the zero subsheaf of $\OO_X$ is semiprime, but its stalk at the origin is not prime because the reduced local ring has zero divisors.
\end{remark}

\begin{corollary}[Stalks of maximal subsheaves]\label{cor:maxstalk}
Let $X$ be Noetherian and $\F$ quasi-coherent. A quasi-coherent $\mathcal M\subseteq\F$ is maximal if and only if there is exactly one point $x$ with $\mathcal M_x\neq\F_x$, and $\mathcal M_x$ is maximal in $\F_x$. This point is necessarily closed.
\end{corollary}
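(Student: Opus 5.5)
The plan is to reduce the statement to the classification of maximal subsheaves, Proposition~\ref{prop:maxsheaf}, and to the description of stalks of skyscraper kernels in Lemma~\ref{lem:maxsheaf}(a). This avoids any coherence hypothesis on $\F$.

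\emph{Forward direction.} Let $\mathcal M$ be maximal. Proposition~\ref{prop:maxsheaf} gives a closed point $x$ and a hyperplane $H\subsetneq\F(x)$ with $\mathcal M=\mathcal P_{x,H}$. Lemma~\ref{lem:maxsheaf}(a) then computes the stalks: $\mathcal M_y=\F_y$ for $y\neq x$, and $\mathcal M_x$ is the inverse image of $H$ in $\F_x$. This inverse image is proper, and $\F_x/\mathcal M_x\cong\F(x)/H\cong\kappa(x)$. Since $\kappa(x)$ is a simple $\OO_{X,x}$-module, $\mathcal M_x$ is maximal in $\F_x$ by Lemma~\ref{lem:simple}. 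Uniqueness of the point is immediate from the stalk description.

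\emph{Converse.} Suppose $x$ is the only point with $\mathcal M_x\neq\F_x$, and $\mathcal M_x$ is maximal in $\F_x$. Put $\mathcal Q=\F/\mathcal M$. Then $\mathcal Q_y=0$ for $y\neq x$, and $\mathcal Q_x\cong\OO_{X,x}/\m'$ for some maximal ideal $\m'$ of $\OO_{X,x}$, by Lemma~\ref{lem:simple}.

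\emph{Key step: $x$ is closed.} Take an affine $U=\Spec A$ containing $x\leftrightarrow\p$, and put $Q=\Gamma(U,\mathcal Q)$. Then $\operatorname{Supp}_AQ\subseteq\{\p\}$. Proposition~\ref{prop:modlocalclass}(b) applies to $\Gamma(U,\mathcal M)\subseteq\Gamma(U,\F)$: the support is exactly $\{\p\}$ because $Q_\p=\mathcal Q_x\ne0$, and the localization at $\p$ is maximal. So $\Gamma(U,\mathcal M)$ is maximal, $\p$ is a maximal ideal of $A$, and $Q\cong A/\p$. Hence $\m'=\p A_\p$ and $\mathcal Q_x\cong\kappa(x)$.

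To see that $x$ is closed in $X$ and not only in $U$, argue by contradiction. If $y\ne x$ lies in $\overline{\{x\}}$, choose an affine $V=\Spec B\ni y$. Then $x\in V$, since every open set containing $y$ contains $x$. Applying the same argument on $V$ shows that $x$ is closed in $V$, which contradicts $y\in\overline{\{x\}}\cap V$ with $y\ne x$. Thus $x$ is a closed point of $X$.

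\emph{Identifying $\mathcal M$.} Let $H$ be the image of $\mathcal M_x$ in $\F(x)$. Since $\mathcal M_x\supseteq\m_x\F_x$ (because $\mathcal Q_x$ is killed by $\m_x$), $H$ is a hyperplane and $\mathcal M_x$ is its full inverse image in $\F_x$. By Lemma~\ref{lem:maxsheaf}(a), the subsheaf $\mathcal P_{x,H}$ has the same stalks as $\mathcal M$ at every point. Both are subsheaves of $\F$, and inclusion of subsheaves is checked on stalks, so $\mathcal M=\mathcal P_{x,H}$. This sheaf is maximal by Lemma~\ref{lem:maxsheaf}(b).

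I expect the main obstacle to be the closedness step, specifically passing from "closed in an affine chart" to "closed in $X$". The argument above handles this by reapplying the module criterion on an affine neighbourhood of a hypothetical specialization $y$ of $x$.
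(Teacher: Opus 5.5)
Your proof is correct. The forward direction is the paper's argument. The converse, however, takes a noticeably longer route.

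The paper proves maximality directly from stalks, without first locating $x$ among the closed points. Any quasi-coherent $\mathcal L\subseteq\mathcal Q=\F/\mathcal M$ has $\mathcal L_y=0$ for $y\neq x$, and $\mathcal L_x\in\{0,\mathcal Q_x\}$ because $\mathcal Q_x$ is simple. Since subsheaves are determined by their stalks, $\mathcal L=0$ or $\mathcal L=\mathcal Q$. Closedness is then a one-line add-on: if $y$ is a specialization of $x$, then on an affine neighbourhood of $y$ (which contains $x$) the stalk $\mathcal Q_x$ is a localization of $\mathcal Q_y$. Hence $\mathcal Q_y\neq0$, and so $y=x$.

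Your ordering is forced by your strategy. Lemma~\ref{lem:maxsheaf} requires $x$ to be closed before you can compare $\mathcal M$ with $\mathcal P_{x,H}$, so you must establish closedness first. You do this via Proposition~\ref{prop:modlocalclass}(b) on every affine chart, followed by a globalization step; both steps are valid.

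What your route buys is the explicit identification $\mathcal M=\mathcal P_{x,H}$, i.e.\ $\F/\mathcal M\cong(j_x)_*\kappa(x)$, inside the converse itself. That identification also follows from Proposition~\ref{prop:maxsheaf} once maximality is known, so the paper's stalkwise argument is more economical. The specialization argument above is exactly the stalk-level content of what you flagged as the main obstacle, and it disposes of it without any module criterion.
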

\begin{proof}
A maximal subsheaf has quotient $(j_x)_*\kappa(x)$ for a closed point $x$ by Proposition~\ref{prop:maxsheaf}, so it has exactly one nonzero stalk, which is simple. Conversely, if $\mathcal Q=\F/\mathcal M$ has just one nonzero stalk and it is simple, every quasi-coherent subsheaf $\mathcal L\subseteq\mathcal Q$ has that stalk either zero or equal to $\mathcal Q_x$. All other stalks vanish, so $\mathcal L=0$ or $\mathcal Q$. Hence $\mathcal M$ is maximal. The point $x$ is closed also directly: for every specialization $y$ of $x$, $\mathcal Q_x$ is a localization of $\mathcal Q_y$, so $\mathcal Q_y\neq0$ and therefore $y=x$.
\end{proof}

\begin{corollary}[Stalk criterion for semireality]\label{cor:sheafsemireallocal}
Let $X$ be Noetherian and $\mathcal Q$ quasi-coherent. The following are equivalent:
\begin{enumerate}
\item[(a)] $\mathcal Q$ is semireal, i.e.\ it has a nonzero real quasi-coherent quotient.
\item[(b)] $\Gamma(U,\mathcal Q)$ is a semireal $\Gamma(U,\OO_X)$-module for some affine open $U\subseteq X$.
\item[(c)] $\mathcal Q_x$ is a semireal $\OO_{X,x}$-module for some $x\in X$.
\end{enumerate}
No coherence assumption is required.
\end{corollary}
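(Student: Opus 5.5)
We will prove the cycle (a)$\Rightarrow$(c)$\Rightarrow$(b)$\Rightarrow$(a), using the sheaf fibre theorem for the global statements and Proposition~\ref{prop:locsemireal} for the local ones.

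For (a)$\Rightarrow$(c), Remark~\ref{rem:sheafsemireal} applies with $\G=0$, $\F=\mathcal Q$. It says that semireality of $\mathcal Q$ is equivalent to $\rrad0\neq\mathcal Q$. By Theorem~\ref{thm:sheaffibre}(b) with $S=X_{\mathrm{re}}$, this holds exactly when some $x\in X_{\mathrm{re}}$ has $\mathcal Q(x)\neq0$. Choose an affine $U=\Spec A$ containing such an $x\leftrightarrow\p$, and put $M=\Gamma(U,\mathcal Q)$. Then $M(\p)\ne0$ with $\kappa(\p)$ real. Hence $P_{\p,0}=K(0,\p)$ is a proper prime submodule with annihilator $\p$ by Theorem~\ref{thm:modclass}(a), and it is real by Theorem~\ref{thm:modclass}(b). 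Since $\p\cap(A\setminus\p)=\varnothing$, Proposition~\ref{prop:locsemireal}(a) makes $M_\p=\mathcal Q_x$ semireal.

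For (c)$\Rightarrow$(b), take an affine $U=\Spec A\ni x$ and put $M=\Gamma(U,\mathcal Q)$. Then $\mathcal Q_x=M_\p$. By Proposition~\ref{prop:locsemireal}(a), with $S=A\setminus\p$, $M$ has a real prime submodule, so $M$ is semireal. This uses only the direction ``localization semireal $\Rightarrow$ module semireal'', which was noted at the end of that proof.

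For (b)$\Rightarrow$(a), let $M=\Gamma(U,\mathcal Q)$ be semireal. By \cite[Theorem~B]{Cdec} it has a real prime submodule $P$ with annihilator $\p$. By Theorem~\ref{thm:modclass}, $P=P_{\p,W}$ with $W\subsetneq M(\p)=\mathcal Q(x)$ and $\kappa(x)$ real, where $x\leftrightarrow\p$. The global point kernel $\mathcal P_{x,W}\subseteq\mathcal Q$ is then a real prime subsheaf by Proposition~\ref{prop:realprimesheaf}, and it is proper because $W$ is proper. Its quotient is therefore a nonzero real quasi-coherent quotient of $\mathcal Q$, which proves (a).

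The main obstacle is the passage between local and global data without any coherence assumption. It is handled because the point-kernel construction of Proposition~\ref{prop:primeclass} and the fibre description of $\rrad$ in Theorem~\ref{thm:sheaffibre} require only quasi-coherence on a Noetherian scheme. We must also check that the identification $M(\p)=\mathcal Q(x)$ from the affine dictionary is the one used in defining $\mathcal P_{x,W}$, so that the module prime $P$ and the sheaf prime correspond.
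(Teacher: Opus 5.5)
Your proof is correct, but it is organized differently from the paper's. The paper gets (a)$\Leftrightarrow$(b) in one step from the affine identification $(\rrad\G)|_U=\widetilde{\rrad N}$ of \eqref{eq:sheafaffineradicals}, applied to $0\subseteq\mathcal Q$. Thus $\rrad0\neq\mathcal Q$ exactly when its sections are proper on some affine chart, i.e.\ when some $\Gamma(U,\mathcal Q)$ is semireal; a nonzero real quotient is then $\mathcal Q/\rrad0$ itself. It then obtains (b)$\Rightarrow$(c) from Proposition~\ref{prop:locsemireal}(b) and (c)$\Rightarrow$(b) from Proposition~\ref{prop:locsemireal}(a); the latter is exactly your middle step.

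You close the cycle differently, with two explicit point-kernel constructions. In (a)$\Rightarrow$(c) you use the real-point fibre test together with the module kernel $K(0,\p)$ to produce a real prime submodule with annihilator $\p$ before localizing. In (b)$\Rightarrow$(a) you globalize an affine real prime $P_{\p,W}$ to the sheaf point kernel $\mathcal P_{x,W}$ via Proposition~\ref{prop:realprimesheaf}, instead of quotienting by the sheaf radical.

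The paper's route is shorter and needs only the affine compatibility of $\rrad$, not the classification of real prime subsheaves. Yours costs that classification but gives a concrete witness for (a): a real prime quotient $\mathcal Q/\mathcal P_{x,W}$ attached to a single point $x$ with real residue field. It also ties the detecting stalk directly to a nonzero fibre $\mathcal Q(x)$ at a point $x\in X_{\mathrm{re}}$.
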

\begin{proof}
By \eqref{eq:sheafaffineradicals}, the real radical of $0\subseteq\mathcal Q$ restricts on $U=\Spec A$ to the real radical of $0\subseteq\Gamma(U,\mathcal Q)$. Thus $\rrad0\neq\mathcal Q$ if and only if this inequality holds on some affine open, proving (a)$\Leftrightarrow$(b). If (b) holds, Proposition~\ref{prop:locsemireal}(b) gives a semireal localization, hence a semireal stalk. Conversely, for an affine neighbourhood $U=\Spec A$ of $x\leftrightarrow\p$, we have $\mathcal Q_x=\Gamma(U,\mathcal Q)_\p$. Proposition~\ref{prop:locsemireal}(a) shows that semireality of this localization implies semireality of $\Gamma(U,\mathcal Q)$.
\end{proof}

The residue field at the detecting stalk need not be real: the stalk $\Z_{(p)}$ of $\OO_{\Spec\Z}$ is semireal, although its residue field $\mathbb F_p$ is not real. Semireality is detected at some stalk, whereas reality must hold at every stalk (Proposition~\ref{prop:stalk}).

\section{A projective example}\label{sec:example}

On an affine scheme, the criterion \eqref{eq:sheafcrit} and Definition~\ref{def:realsheaf} recover semiprime and real submodules by Proposition~\ref{prop:sheafaffine}. Theorem~\ref{thm:sheaffibre}(a),(b) expresses the radicals as intersections of prime and real prime subsheaves, without a finiteness assumption on $\F$. For coherent $\F$ on a finite-type scheme, Theorem~\ref{thm:sheaffibre}(b) gives the maximal and real maximal intersections and the corresponding closed-point tests.

The following example compares the ordinary and real radicals on a projective scheme and on its standard affine cover, and shows why they cannot be computed from global sections alone.

\begin{example}[Multiplication by $x_0+ix_1$ on $\mathbb P^2_\R$]\label{ex:projective}
Let $X=\mathbb P^2_\R$, $\F=\OO_X(2)^2$, $Z=V(x_0^2+x_1^2)$ and $p=[0:0:1]$. Define
\[
\Theta=\begin{pmatrix}x_0&-x_1\\x_1&x_0\end{pmatrix},\qquad
\begin{aligned}
\mathcal H&=\operatorname{im}(\Theta:\OO_X(1)^2\to\F),\\
\G&=\operatorname{im}(\Theta^2:\OO_X^2\to\F).
\end{aligned}
\]
The adjugate identities and nonzero determinants give injectivity, so $\mathcal H\cong\OO_X(1)^2$ and $\G\cong\OO_X^2$. We shall prove
\[
\G\subsetneq\rad\G=\mathcal H\subsetneq\rrad\G=\mathcal I_p\F\subsetneq\F.
\]

\emph{Geometry and affine description.} The conic $Z$ is integral over $\R$, with unique singular and real point $p$. Over $\mathbb C$ it splits into the conjugate lines $V(x_0\pm ix_1)$, meeting at $p$. Its complement of $p$ is
\[
Z\setminus\{p\}=\Spec\R[s,t]/(1+s^2)\cong\Spec\mathbb C[t],
\qquad (s,t)=(x_1/x_0,x_2/x_0).
\]
Thus its other closed points have residue field $\mathbb C$, and its generic point $\eta$ has residue field $\mathbb C(t)$.

On $U_j=D_+(x_j)=\Spec A_j$, trivialize $\OO_X(k)$ by $x_j^k$. Put
\[
\begin{aligned}
A_0&=\R[s,t], &(s,t)&=(x_1/x_0,x_2/x_0), &w_0&=1+is,\\
A_1&=\R[r,h], &(r,h)&=(x_0/x_1,x_2/x_1), &w_1&=r+i,\\
A_2&=\R[u,v], &(u,v)&=(x_0/x_2,x_1/x_2), &w_2&=u+iv.
\end{aligned}
\]
Identify $A_j^2$ with $B_j:=A_j\otimes_\R\mathbb C$ via $(a,b)\mapsto a+ib$. The maps become multiplication by $w_j$ and $w_j^2$, so the sections of $\mathcal H$ and $\G$ are $w_jB_j$ and $w_j^2B_j$. Moreover,
\[
B_0/(w_0)\cong\mathbb C[t],\qquad
B_1/(w_1)\cong\mathbb C[h],\qquad
B_2/(w_2)\cong\mathbb C[v],
\]
with $s\mapsto i$, $r\mapsto-i$ and $u\mapsto-iv$. Their annihilators over $A_j$ are $(1+s^2)$, $(r^2+1)$ and $(u^2+v^2)$: in the last case, divisibility of a real polynomial by $u+iv$ also implies divisibility by $u-iv$, hence by their product. These quotients are torsion-free over the coordinate domains of $Z$. By Proposition~\ref{prop:primestalk}(b), $\mathcal H$ is prime, hence semiprime.

\emph{Radicals and fibre images.} At any point $x\in U_j$, the fibre matrix is
\[
B=\begin{pmatrix}a&-b\\b&a\end{pmatrix},\qquad
(a,b)=(x_0/x_j,x_1/x_j)(x).
\]
It is invertible outside $Z$ and zero at $p$. On $Z\setminus\{p\}$, $a^2+b^2=0$ and $a\neq0$, so $B$ has rank $1$ and $B^2=2aB$. Hence $\G(x)=\mathcal H(x)$ at every point. Since $\mathcal H$ is semiprime, Theorem~\ref{thm:sheaffibre}(b) gives
\[
\rad\G=\rad\mathcal H=\mathcal H.
\]
Among real closed points, only $p$ has $\G(x)\neq\F(x)$, and $\G(p)=0$. Theorem~\ref{thm:sheaffibre}(b) therefore gives $\rrad\G=\mathcal I_p\F$. The resulting dimensions of images in $\F(x)$, over $\kappa(x)$, are:
\begin{center}\small\renewcommand{\arraystretch}{1.3}
\begin{tabular}{@{}lcccc@{}}
point $x$ & $\G(x)$ & $(\rad\G)(x)$ & $(\rrad\G)(x)$ & $\F(x)$ \\ \hline
$x\notin Z$ & $2$ & $2$ & $2$ & $2$ \\
$x\in Z\setminus\{p\}$ & $1$ & $1$ & $2$ & $2$ \\
$x=p$ & $0$ & $0$ & $0$ & $2$
\end{tabular}
\end{center}
The last two rows, together with $w_j^2B_j\subsetneq w_jB_j$, prove the strict inclusions above. Equality of the first two columns illustrates Theorem~\ref{thm:sheaffibre}: a subsheaf and its semiprime radical have the same images in every fibre.

These images need not be the intrinsic fibres $\mathcal E_x/\m_x\mathcal E_x$. At $p$, the intrinsic fibres of $\G$ and $\mathcal H$ have dimension $2$, whereas that of $\mathcal I_p\F$ has dimension $4$, since $(\mathcal I_p\F)_p=\m_p^{\oplus2}$ and $\m_p=(u,v)\OO_{X,p}$. Thus $\mathcal H_p\subsetneq(\mathcal I_p\F)_p$ although both images in $\F(p)$ vanish.

\emph{Sections.} Let $S_k$ denote the real homogeneous forms of degree $k$ and put $w=x_0+ix_1$, so $w_j=w/x_j$. Identifying pairs of forms with $S_k\otimes_\R\mathbb C$, we obtain the following table; the last column includes dimensions over $\R$.
\begin{center}\small\renewcommand{\arraystretch}{1.3}
\begin{tabular}{@{}lcccc@{}}
sections over & $U_0$ & $U_1$ & $U_2$ & $X$ \\ \hline
$\G$ & $w_0^2B_0$ & $w_1^2B_1$ & $w_2^2B_2$ & $w^2\mathbb C$ \ ($2$) \\
$\rad\G$ & $w_0B_0$ & $w_1B_1$ & $w_2B_2$ & $w(S_1\otimes\mathbb C)$ \ ($6$) \\
$\rrad\G$ & $B_0$ & $B_1$ & $(u,v)B_2$ & $\{f\in S_2\otimes\mathbb C:\ f(p)=0\}$ \ ($10$) \\
$\F$ & $B_0$ & $B_1$ & $B_2$ & $S_2\otimes\mathbb C$ \ ($12$)
\end{tabular}
\end{center}
The global dimensions follow from $\G\cong\OO_X^2$, $\mathcal H\cong\OO_X(1)^2$, and $\dim S_2=6$, with vanishing at $p$ imposing one condition on each component. Since $\Gamma(X,\OO_X)=\R$ is a real field, every subspace of $\Gamma(X,\F)$ is a real, hence semiprime, $\R$-submodule, so both module radicals of $\Gamma(X,\G)$ equal $\Gamma(X,\G)$ itself; they do not recover the sections of the sheaf radicals, of dimensions $6$ and $10$.

For every open $U\subseteq X$, the supports of the quotients give
\[
(\rad\G)|_U\neq\F|_U\iff U\cap Z\neq\varnothing,\qquad
(\rrad\G)|_U\neq\F|_U\iff p\in U.
\]
In particular, removing $p$ makes the real radical the whole sheaf, while the ordinary radical remains proper.

\emph{Multiplicities.} Fibre images do not distinguish $\G$ from $\mathcal H$, but quotient lengths do. At $\eta$, the local ring $D=\OO_{X,\eta}=(A_2)_{(\pi)}$ is a discrete valuation ring with uniformizer $\pi=u^2+v^2$. The matrices $\Theta$ and $\Theta^2$ have unit entries $u$ and $u^2-v^2\equiv2u^2\pmod\pi$, and determinants $\pi$ and $\pi^2$. Row and column operations over $D$ therefore give
\[
(\F/\G)_\eta\cong D/(\pi^2),\qquad
(\F/\mathcal H)_\eta\cong D/(\pi).
\]
The lengths over the respective local rings are:
\begin{center}\small\renewcommand{\arraystretch}{1.3}
\begin{tabular}{@{}lccc@{}}
quotient & support & length at $\eta$ & length at $p$ \\ \hline
$\F/\G$ & $Z$ & $2$ & $\infty$ \\
$\F/\rad\G$ & $Z$ & $1$ & $\infty$ \\
$\F/\rrad\G$ & $\{p\}$ & $0$ & $2$
\end{tabular}
\end{center}
Here $\infty$ means no finite composition series: the first two quotients have one-dimensional support at $p$, while the third is the skyscraper $\R^2$. The ordinary radical removes multiplicity along $Z$; the real radical leaves a quotient supported only at $p$.

\emph{Direct witnesses.} The failures of semiprimeness of $\G$ and reality of $\mathcal H$ also follow from the intrinsic definitions. For $\mathcal Q=\F/\G$, the adjugate identity gives $\mathcal I_Z^2\mathcal Q=0$, but $\mathcal L=\mathcal I_Z\mathcal Q\neq0$, since $u^2+v^2\notin(u+iv)^2B_2$. Thus $\mathcal I_Z\mathcal L=0$ and $\mathcal L\subseteq\mathcal I_Z\mathcal Q$, violating \eqref{eq:sheafidealcrit}. For $\mathcal T=\F/\mathcal H$, the image of $(x_0,x_1):\mathcal T(-1)\to\mathcal T^{\oplus2}$ is nonzero and totally isotropic, as its tensor pairing factors through multiplication by $x_0^2+x_1^2$, which kills $\mathcal T$; so $\mathcal H$ is not real by Definition~\ref{def:realsheaf}.

\emph{Prime and maximal intersections.} Since $\G(\eta)=\mathcal H(\eta)$, $\F/\mathcal H$ is torsion-free on $Z$, and $(\F/\mathcal H)_\eta$ is killed by $\m_\eta$, and since $\mathcal I_p\F$ consists of the sections vanishing at $p$, we have
\[
\mathcal H=\mathcal P_{\eta,\G(\eta)},\qquad
\mathcal I_p\F=\mathcal P_{p,0}.
\]
These are the smallest prime and real prime subsheaves containing $\G$. The maximal subsheaves over $\G$ are $\mathcal P_{p,L}$ for lines $L\subset\R^2$, and $\mathcal P_{x,\G(x)}$ for non-real closed $x\in Z$. Intersecting all of them gives $\mathcal H$; intersecting just the real ones gives $\mathcal I_p\F$.

Finally, for $\F_\infty=\bigoplus_{n\ge1}\F$ and $\G_\infty=\bigoplus_{n\ge1}\G$, the direct-sum formulas \eqref{eq:radcoproduct} on affine opens give
\[
\rad\G_\infty=\bigoplus_{n\ge1}\mathcal H,\qquad
\rrad\G_\infty=\mathcal I_p\F_\infty.
\]
This retains the chart distinctions and gives a real prime quotient at $p$ with infinite-dimensional fibre, illustrating the intersection results for noncoherent quasi-coherent sheaves.
\end{example}

\section{Questions}\label{sec:questions}

\begin{question}
Which modules over a Jacobson ring are Jacobson? By the linear separation argument in the proof of Theorem~\ref{thm:jac} (cf.\ \cite[Lemma~2.6]{MSS}), $M$ is Jacobson if and only if $\bigcap_\m\m(M/P)=0$ for every prime submodule $P$ of $M$. By Theorem~\ref{thm:jac} it suffices that every $M/P$ be torsionless over $R/(P:M)$, and by Example~\ref{ex:Q}(d) this is not necessary; by Example~\ref{ex:Q}(a), without finite generation $M$ need not be Jacobson. Is there a useful intrinsic description, e.g.\ over $k[x,y]$?
\end{question}

\begin{question}\label{q:effective}
The radicals admit iterative descriptions, which suggest an effective version of the Nullstellens\"atze. For a submodule $N$ of an $R$-module $M$, let $N^{(1)}$ be generated by all $m\in M$ with $m\in(N:m)M$, and let $N^{[1]}$ be generated by all $m_i$ occurring in finite families with $\sum_i\bar m_i\otimes\bar m_i=0$ in $(M/N)\otimes_R(M/N)$. Starting with $N^{(0)}=N^{[0]}=N$, iterate these operations:
\[
N^{(k+1)}=(N^{(k)})^{(1)},\qquad
N^{[k+1]}=(N^{[k]})^{[1]}.
\]
For arbitrary modules,
\[
\rad N=\bigcup_{k\ge0}N^{(k)},\qquad
\rrad N=\bigcup_{k\ge0}N^{[k]}.
\]
The ordinary formula for finitely generated $M$ is \cite[Theorem~2.7]{Ary}. For $M=R$, the operations adjoin the $r$ with $r^2\in N$, respectively the $r_i$ with $\sum_i r_i^2\in N$.

For finitely generated modules, Marcelo and Rodr\'iguez characterize the prime radical as the degree-one part of an ideal radical in the symmetric algebra \cite[Corollary~2.4 and Proposition~3.2]{MR}; Section~3.2 of their paper uses this to compute generators for submodules of polynomial free modules. The question here concerns bounds on the iterative constructions above.

For finitely generated modules over Noetherian rings both chains stabilize, but the number of steps is not bounded in terms of $R$ and $M$ alone: for $R=M=\R[x]$ and $N=(x^{2^h})$ one has
\[
N^{(j)}=N^{[j]}=(x^{2^{h-j}})\qquad(0\le j\le h),
\]
so reaching $\rad N=\rrad N=(x)$ takes exactly $h$ steps. For $R=k[x_1,\dots,x_n]$ and $M=R^m$, are there effective bounds on the number of steps in terms of $n$, $m$, the number of generators of $N$ and their degrees, in the spirit of the effective Nullstellensatz?
\end{question}

\section*{Acknowledgements}
In the preparation of this paper the author used the AI assistants Claude (Anthropic) and ChatGPT (OpenAI) for conceptual discussions regarding proofs and literature, and for editing assistance. All arguments and references were independently verified by the author, who takes full responsibility for the content.

\enlargethispage{2\baselineskip}

\end{document}